\newtheorem{theorem}{Theorem}[section]
\newtheorem{lemma}[theorem]{Lemma}
\def\R{{\Bbb R}}
\def\ifl{\iffalse }
\def\bc{\begin{center}}       \def\ec{\end{center}}
\def\ba{\begin{array}}        \def\ea{\end{array}}
\def\be{\begin{equation}}     \def\ee{\end{equation}}
\def\bea{\begin{eqnarray}}    \def\eea{\end{eqnarray}}
\def\beaa{\begin{eqnarray*}}  \def\eeaa{\end{eqnarray*}}
\numberwithin{equation}{section}
\newtheorem{corollary}[theorem]{Corollary}
\newtheorem{remark}[theorem]{Remark}
\numberwithin{equation}{section}
\newcommand{\D}{\displaystyle}
\begin{document}

\title[Chemotaxis system, boundedness, and global existence ]
{A  class of chemotaxis systems with   growth  source  and nonlinear secretion}

\author{Zhi-an Wang}
\address{Department of Applied Mathematics, Hong Polytechnic University, Hong Kong}

\author{Tian Xiang}
\address{Institute for Mathematical Sciences, Renmin University of China, Beijing,  100872, China}
\email{mawza@polyu.edu.hk}
\email{txiang@ruc.edu.cn}
%\thanks{Partially supported by the Project 985  of Renmin University of China.}

\subjclass[2000]{Primary:  35K57, 	35K51,  92C17; Secondary: 37K50, 35A01.}

%\date{January 1, 2001 and, in revised form, June 22, 2001.}

\keywords{Chemotaxis systems, growth source, nonlinear secretion, characterization,  global existence, pattern formation, stability. }

\begin{abstract}
In this paper, we are concerned with a class of  parabolic-elliptic chemotaxis systems encompassing the prototype
$$\left\{ \begin{array}{lll}
&u_t = \nabla\cdot(\nabla u-\chi u\nabla v)+f(u),  & x\in \Omega, t>0, \\[0.2cm]
&0= \Delta  v -v+u^\kappa,  &  x\in \Omega, t>0   \end{array}\right.
$$
with nonnegative initial  condition for $u$  and homogeneous Neumann boundary conditions in a smooth bounded domain $\Omega\subset \mathbb{R}^n(n\geq 2)$, where  $\chi>0$,   $\kappa>0$ and $f$ is a smooth growth source satisfying $f(0)\geq 0$ and
$$
f(s)\leq a-bs^\theta, \quad s\geq 0, \text{ with some } a\geq 0, b>0, \theta>1.
$$
Firstly, it is shown,  either
$$
\kappa<\frac{2}{n}\quad  \&  \quad f\equiv 0,
$$
or
$$\theta>\kappa+1,
$$
or
$$
\theta-\kappa=1, \ \ b\geq \frac{(\kappa n-2)}{\kappa n}\chi, \eqno(*)
$$
 that the corresponding initial-value problem admits a unique classical solution that is uniformly bounded in space and time. Our proof is elementary and  semigroup-free. Whilst,  with the particular choices $\theta=2$ and $\kappa=1$, Tello and Winkler \cite{TW07} use sophisticated estimates via the Neumann heat semigroup to obtain the global boundedness under the strict inequality in ($\ast$).  Thereby,  we  improve their results to the ``borderline" case   $b=(\kappa n-2)/(\kappa n)\chi$  in this regard. Next, for an unbounded range of  $\chi$, the system is shown to exhibit pattern formations, and,   the emerging  patterns are shown to converge weakly  in $ L^\theta(\Omega)$   to some constants as $\chi\rightarrow \infty$.  While,  for small $\chi$ or large damping $b$, precisely $b>2\chi$ if $f(u)=u(a-bu^\kappa)$ for some $a, b>0$, we show that the system does not admit pattern formation and  the large time behavior of solutions is comparable to its associated  ODE+algebraic system.

\end{abstract}

\maketitle

\section{Introduction}

%{\color{red} {I modified the abstract and Section 6. I trust you can write a good introduction and polishing. Again, I wish to underline that our main motivations:  Use elementary and  semigroup-free to extend and improve  Tello and Winkler \cite{TW07} the "borderline" case   $b=(\kappa n-2)/(\kappa n)\chi$  in this regard; provide limiting behavior for small $\chi$ or large damping $b$, and offer a refined argument to obtain  the large time behavior of a general chemotaxis-growth model.  }}\\

Following the first chemotaxis model proposed by Keller and Segel in \cite{Ke} to describe the aggregation phase of cellular slime mold, the mathematical modeling and analysis of chemotaxis have been rapidly  developed in various deep ways (see review articles \cite{BBTW15, Hi, Ho1, Wang-review}). Due to its important applications in biological and medical sciences, chemotaxis research has become one of the most hottest topics in applied mathematics nowadays and tremendous theoretical progresses have been made in the past few decades.  This paper is devoted to making further development  for the following  quasilinear parabolic-elliptic chemotaxis systems with nonlinear production of signal and  growth source, reading as
\begin{equation}\label{para-elli}
\left\{ \begin{array}{llll}
&u_t = \nabla \cdot (\nabla u-\chi u\nabla v)+f(u),  & x\in \Omega, t>0, \\[0.2cm]
&\tau v_t= \Delta v  - v+g(u)  ,  & x\in \Omega, t>0, \\[0.2cm]
&\frac{\partial u}{\partial \nu}=\frac{\partial v}{\partial \nu}=0, & x\in \partial \Omega, t>0,\\[0.2cm]
&u(x,0)=u_0(x), \  \tau v(x,0)=\tau v_0(x), & x\in \Omega   \end{array}\right.  \end{equation}
with $\tau \in \{0,1\}$, where $\Omega\subset \mathbb{R}^n (n\geq 1$) is a bounded domain  with  smooth boundary $\partial \Omega$ and $\frac{\partial }{\partial \nu}$ denoted the derivative with respect to the outward normal vector $\nu$ of $\partial\Omega$.  $u(x,t)$ and $v(x,t)$ denote the cell density and chemical concentration, respectively.   $\chi(>0)$ is referred to as the chemotactic sensitivity coefficient measuring the strength of chemotaxis. The  kinetic term $f$ describes cell proliferation and  death (simply referred to as growth) and $g(u)$ accounts for the chemical secretion by cells. If $\tau=1$, the model (\ref{para-elli}) is called full parabolic-parabolic chemotaxis system.  If $\tau=0$, (\ref{para-elli}) is referred to as simplified parabolic-elliptic chemotaxis system which is physically  relevant when the chemicals diffuse much faster than cells do. This simplified system was first introduced for the case $f(u)=0$  and $g(u)= u$ (minimal model) in \cite{Jager} and thereafter was studied by other authors in various contexts (e.g. see \cite{Na95, Herrero-Vela97b, TW07}).

It has been well-known that when $f(u)=0$ and $g(u)= u$, the minimal model (\ref{para-elli}) possesses blow-up solutions in finite/infinite time (see \cite{Ho1,Win100,  Win13}) in two or higher dimensions. This limits the value of the model to explain the aggregation phenomena observed in experiment. Hence,  the foremost question for the chemotaxis-growth system (\ref{para-elli}) is whether or not the appearance of growth source $f(u)$ can enforce the boundedness of solutions so that blow-up is inhibited. Toward this end, many efforts have been made first for the linear chemical production and the logistic source:
\begin{equation}\label{form}
f(u)=ru-\mu u^2, \ \ \ \ \ g(u)=u
\end{equation}
First, Osaki {\it et al} \cite{OTYM02} proved that in two dimension ($n=2$) the model (\ref{para-elli}) with $\tau=1$ and (\ref{form}) has a classical uniform-in-time bounded solution for any $r\in \R, \mu>0$. In higher dimensions ($n\geq 3$), Winkler \cite{Win10} shows, under the logistic source
\begin{equation}\label{log-form}
f(u)\leq a-bu^2, \ \ f(0)\geq 0, \ \ \ a\geq 0, b>0, u\geq 0,
\end{equation}
there exists a large positive number  $b_0$ such that if $b>b_0$, then the chemotaxis-growth  system (\ref{para-elli}) with $\tau=1$ and $g(u)=u$  admits a classical uniform-in-time bounded solution in a bounded  convex domain $\Omega \subset \R^n$.  The  existence of global weak solutions to \eqref{para-elli} with $\tau=1$ and \eqref{form}  is newly known   for  $\mu>0$  in  convex domains \cite{La15-JDE}.  Recently,  it was further proved in \cite{Win15-jde},  if $\mu$ is sufficiently large, the solution $(u,v)$ of \eqref{para-elli}+\eqref{form} stabilizes to the constant steady state $(\frac{r}{\mu}, \frac{r}{\mu})$ globally as time tends to infinity. However,  the explicit form of $b_0$ for the parabolic-parabolic system (\ref{para-elli}) (i.e. $\tau=1$) is largely open today. A rough explicit lower bound for a 3-D chemotaxis-fluid system with logistic source was obtained in \cite{TW15-ZAMP}, when applied to  the chemotaxis system \eqref{para-elli}+\eqref{form} with $\chi=\tau=1$, their result states that $\mu\geq 23$ is enough to prevent blow-ups.  But finding the explicit form of $b_0$, in particular,  the possible smallest value of $b_0$, is a very interesting question since it addresses how strong the dampening source is needed to inhibit the blowup; an effort toward this is under exploration \cite{Xiang15-pre}.  Turning to parabolic-elliptic systems, some progress for (\ref{para-elli}) (i.e.,  $\tau=0$) has been made by Tello and Winkler \cite{TW07} wherein they showed, if $b >b_0=\frac{n-2}{n}\chi$, then the system (\ref{para-elli}) with \eqref{log-form} and $g(u)=u$ admits globally bounded classical solutions via sophisticated estimates via the Neumann heat semigroup. Whilst,  it has not been known whether or not their results hold true for the borderline case $b =b_0$. The starting  point of this paper is then to extend the results of \cite{TW07} to the borderline case $b=b_0$ by using elementary mathematical tools instead of complicated semigroup approach.  On the other hand,   the chemical production function $g(u)$ could  not necessarily be linear in $u$ (that is $\kappa$ may not be $1$). For example, $g(u)=u^2$ was used to model the aggregation patterns formed by bacterial chemotaxis (see \cite[Chapter 5]{Mu02}). Indeed, in a series of works by Nakaguchi {\it et al} \cite{EN08, NO11, NO13}, the following functions $f(u)$ and $g(u)$ are chosen:
\begin{equation}\label{form1}
f(u)=u-\mu u^\theta, \ \ g(u)=u(u+1)^{\kappa-1}, \ \mu>0, \theta>1, \kappa>0.
\end{equation}
It was shown in \cite{EN08, NO11, NO13} that  if either
$$\theta>2 \kappa+1 \ \mathrm{for} \ n\geq 3$$
or
$$\theta \geq\max\{2, 2\kappa\} \ \mathrm{for}\ n=2,$$
then the system (\ref{para-elli}) with $\tau=1$ and (\ref{form1}) has at least one global classical solution. The above conditions require rapid dampening  source term (i.e.,  the power parameter $\theta$ is required to be large).  Accordingly,  it would be interesting to ask whether this exponent can be reduced to guarantee the existence of globally bounded solutions. Finally,  we observe that enormous  variants of (\ref{para-elli})+\eqref{form} have been considered  (e.g. see \cite{ BH13,  Cao14,  TW12,  TW14-JDE, Win08,  Xiang15, ZL15-ZAMP, Zh15}), and  that explosion of solutions  is still possible  in chemotaxis systems despite logistic dampening \cite{Win11}.

 Beyond  the boundedness issue motivated above, we also wish to study qualitative properties for the chemotaxis model  (\ref{para-elli}). Thus, in this work,  we propose to consider  the chemotaxis-growth model (\ref{para-elli}) with $\tau=0$ (the case $\tau=1$ is largely untouched), and the following more general conditions of $f(u)$ and $g(u)$ covering the commonly used choices  (\ref{form}) and (\ref{form1}): $f$ is smooth with $f(0)\geq 0$ and there are $a\geq 0, b>0$ and $\theta>1$ such that
%To include more general results in our paper, we assume that:
%The assumptions on $f$ and $g(u)$ is the following:
%\begin{equation}\label{chi-con}
%\chi(v)\leq \chi_0 \text{ for all } v\geq 0 \text{ and some } \chi_0>0,
%\end{equation}
\begin{equation}\label{f-con}
\ \ \ \ \ \ \ \ \ \ \ \ \ \ \ \ \ \ \ \ \ \ f(u)\leq a-bu^\theta \text{ for all } u\geq 0
\end{equation}
and, $g$ is smooth and there are $\beta>0$ and $\kappa>0$ such that
\begin{equation}\label{g-con}
\ \ \ \ \ \ \ \ \ \ g(u)\leq \beta u^\kappa \text{ for all }  u\geq 0.
\end{equation}
Note that  the assumption (\ref{f-con}) on $f$ recovers a wide class of biological meaningful source functions, such as the logistic type $f(u)= r u-\mu u^2 (r\geq 0, \mu >0)$ for $\theta=2$ and Allee effect type $f(u)=u(1-u)(u-1/2)$ for  $\theta=3$.
\medskip

The main results of this work are outlined below:

\begin{itemize}
\item \textbf{Global classical solutions}. In Section 4, with  the aid of the $L^{\kappa n/2+\epsilon}$-boundedness  criterion  in Theorem \ref{thm-para-para-global-ext},  we establish the boundedness and  global existence of classical solutions to the system (\ref{para-elli}) with $\tau=0$ and (\ref{f-con})-(\ref{g-con}) if either
$$\theta>\kappa+1,
$$
or
$$
\theta=\kappa+1, \ \ b\geq b_0=\frac{(\kappa n-2)}{\kappa n}\chi.
$$
These results are stated in Theorems  \ref{thm-para-elli-min} and \ref{thm-para-elli-min2}. As a by-product, we extend the global existence results for the linear secretion   $g(u)=u$ of Tell and Winkler \cite{TW07} to the borderline case $b=b_0$ via a more elementary and semigroup-free method, and hence also generalizes the boundedness results of \cite{CZ13, WMZ14, Zh15}.  Clearly, our results also largely improve the boundedness  results of \cite{EN08, NO11, NO13} by reducing the dampening strength parameter $\theta$.

\item \textbf{Non-constant steady states}. In  Section 5,  we first study the regularity and then show the existence of non-constant steady states of (\ref{para-elli}) by the degree theory, which not only covers the results of Tello and Winkler \cite{TW07} with  logistic source and linear production (i.e.  $\kappa=1$), but also provide  clearer  and more verifiable conditions for the existence of pattern formations (Theorem \ref{bif-thm}).  More importantly,  we investigate the asymptotic behavior of stationary solutions as $\chi \to \infty$ in certain parameter regime, which demonstrates that  the emerging patterns will converge weakly in $L^\theta(\Omega)$ to some constants when chemotatic effect becomes highly strong , cf. Theorem \ref{asy-profile}. This also offers a clarification for the gaps  made in Kuto {\it et al} \cite{KOST12}  for the special cases $f(u)=au-bu^2$ and $g(u)=\beta u$.

\item  \textbf{Large time behavior of solutions}. In Section 6, to extend Tello and Winkler's argument \cite{TW07} to general cases,  we first specify the choices  $f(u)=u(a-bu^\kappa)$ and $g(u)=u^\kappa$, under the explicit condition  $b>2\chi$, we show that the constant steady state $((\frac{a}{b})^{\frac{1}{\kappa}}, \frac{a}{b})$  is globally  asymptotically stable with exponential decay rate, cf.  Theorem \ref{large time}. Then,  for a general cell kinetic form $f(u)$, we are also able to show that the constant steady state is locally exponentially asymptotically stable under certain conditions, cf. Theorem  \ref{large time2}.
\end{itemize}

\section{Preliminaries}
To start with, let us collect  a well-known calculus inequality  and  state the local well-posedness  of the chemotaxis-growth system (\ref{para-elli}).

\begin{lemma}[Gagliardo-Nirenberg interpolation inequality \cite{Nirenberg66, Fried}]\label{GN-general}
Let $\Omega$ be a bounded domain in $\mathbb{R}^n$ with a smooth boundary and let
 $1 \leq  q, r\leq \infty$.
 \begin{itemize}
 \item [(i)]For any number  $\delta\in (0,1)$, set
 $$
 \frac{1}{p}=\delta\bigg(\frac{1}{r}-\frac{1}{n}\bigg)+(1-\delta)\frac{1}{q}.
 $$
 Then
 \be\label{GN-gen}
 \| w\|_{L^p(\Omega)}\leq C\|w\|_{W^{1,r}(\Omega)}^\delta\|w\|_{L^q(\Omega)}^{1-\delta}, \quad \forall \  w\in W^{1,r}(\Omega)\cap L^q(\Omega).
 \ee
 \item[(ii)] For any number  $\delta\in [1/2,1)$, set
 $$
 \frac{1}{p}=\frac{1}{n}+\delta\bigg(\frac{1}{r}-\frac{2}{n}\bigg)+(1-\delta)\frac{1}{q}.
 $$
Then
 \be\label{GN-geng}
 \|\nabla   w\|_{L^p(\Omega)}\leq C\|w\|_{W^{2,r}(\Omega)}^\delta\|w\|_{L^q(\Omega)}^{1-\delta}, \quad \forall \ w\in W^{2,r}(\Omega)\cap L^q(\Omega).
 \ee
  The constant depends only on $\Omega, q,r, \delta, n$.
 \end{itemize}
\end{lemma}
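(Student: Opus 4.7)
The plan is to reduce both inequalities to the whole-space case $\mathbb{R}^n$ via the standard extension operator for smooth bounded domains, and then derive them from the Sobolev embedding combined with the log-convexity of $L^p$-norms. Since $\Omega$ has smooth boundary, there is a bounded linear extension $E:W^{k,r}(\Omega)\to W^{k,r}(\mathbb{R}^n)$ for $k\in\{1,2\}$, whose image is compactly supported in a fixed neighborhood of $\overline{\Omega}$ and whose $W^{k,r}$ and $L^q$ norms on $\mathbb{R}^n$ are controlled by the corresponding norms on $\Omega$. Thus it suffices to prove both estimates for functions in $W^{k,r}(\mathbb{R}^n)\cap L^q(\mathbb{R}^n)$.

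For part (i), the central ingredient is the Sobolev embedding $W^{1,r}(\mathbb{R}^n)\hookrightarrow L^{r^*}(\mathbb{R}^n)$ with $\tfrac{1}{r^*}=\tfrac{1}{r}-\tfrac{1}{n}$ when $r<n$ (and the customary modifications when $r\geq n$). The defining identity
$$\frac{1}{p}=\delta\Bigl(\frac{1}{r}-\frac{1}{n}\Bigr)+(1-\delta)\frac{1}{q}=\frac{\delta}{r^*}+\frac{1-\delta}{q}$$
exhibits $L^p$ as the log-convex interpolant between $L^{r^*}$ and $L^q$, so H\"older's inequality immediately gives
$$\|w\|_{L^p}\leq \|w\|_{L^{r^*}}^{\delta}\|w\|_{L^q}^{1-\delta}\leq C\|w\|_{W^{1,r}}^{\delta}\|w\|_{L^q}^{1-\delta}.$$
The endpoint cases $r=n$ (using $W^{1,n}\hookrightarrow L^s$ for every finite $s$) and $r>n$ (using $W^{1,r}\hookrightarrow L^\infty$) are handled in the same fashion.

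For part (ii), I would apply part (i) to the vector field $\nabla w\in W^{1,r}(\mathbb{R}^n)$, obtaining an estimate of the form $\|\nabla w\|_{L^p}\leq C\|w\|_{W^{2,r}}^{\tilde\delta}\|\nabla w\|_{L^{\tilde q}}^{1-\tilde\delta}$ with a suitable auxiliary exponent $\tilde q$. The residual gradient norm is then eliminated by the integration-by-parts identity $\int|\nabla w|^{2}=-\int w\,\Delta w$, which via H\"older yields $\|\nabla w\|_{L^2}^{2}\leq \|w\|_{L^{q_1}}\|D^2w\|_{L^{q_1'}}$ and, after one further application of part (i), a bound of the form $\|\nabla w\|_{L^{\tilde q}}\leq C\|w\|_{W^{2,r}}^{1/2}\|w\|_{L^q}^{1/2}$. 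Composing the two estimates and simplifying the exponent arithmetic delivers the claim, with the restriction $\delta\geq 1/2$ emerging precisely because each Cauchy--Schwarz step absorbs at least half of the regularity balance. The principal obstacle is exactly this two-step interpolation: a single use of part (i) on $\nabla w$ is not self-contained and reintroduces a gradient norm on the right, so one must couple it with the integration-by-parts identity and verify that the resulting exponent arithmetic lands on the prescribed relation $\tfrac{1}{p}=\tfrac{1}{n}+\delta(\tfrac{1}{r}-\tfrac{2}{n})+(1-\delta)\tfrac{1}{q}$, which is the delicate book-keeping at the heart of the argument.
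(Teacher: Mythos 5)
The paper does not prove this lemma; it cites the classical Gagliardo--Nirenberg inequality from \cite{Nirenberg66, Fried} and applies it as a black box, so your proposal supplies an argument where the paper has none, and must be judged on its own terms.

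For part (i) with $r<n$, your reduction to $\mathbb{R}^n$ by extension and the chain $\|w\|_{L^p}\leq\|w\|_{L^{r^*}}^\delta\|w\|_{L^q}^{1-\delta}\leq C\|w\|_{W^{1,r}}^\delta\|w\|_{L^q}^{1-\delta}$ (Sobolev embedding followed by log-convexity of the $L^s$-scale) is correct, and the prescribed $\delta$ is exactly the interpolation parameter because $1/r^*=1/r-1/n$. The genuine gap is the assertion that the cases $r=n$ and $r>n$ are ``handled in the same fashion.'' For $r=n$ there is no embedding into $L^\infty$, and for any finite replacement $L^s$ the interpolation parameter $\theta_s$ determined by $1/p=\theta_s/s+(1-\theta_s)/q$ satisfies $\theta_s>\delta$ strictly (it tends to $\delta$ only as $s\to\infty$, while the embedding constant of $W^{1,n}\hookrightarrow L^s$ blows up); since $\|w\|_{W^{1,n}}/\|w\|_{L^q}$ has no a priori upper bound, one cannot trade $\theta_s$ down to $\delta$ by an elementary manipulation. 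The same mismatch occurs for $r>n$: with $W^{1,r}\hookrightarrow L^\infty$, log-convexity gives $1/p=(1-\theta)/q$, whereas the lemma prescribes $1/p=\delta(1/r-1/n)+(1-\delta)/q$ with a nonzero negative first term, so again $\theta>\delta$ and the inequality actually obtained is strictly weaker than the one claimed. These endpoints require the sharper form of Gagliardo--Nirenberg (Nirenberg's one-dimensional slicing argument, or interpolation in Lorentz spaces), not Sobolev plus H\"older. This matters for this paper: the lemma is invoked in the proof of Theorem \ref{thm-para-para-global-ext} with $r=2$ and $n\geq 2$, so $r=n$ actually occurs when $n=2$.

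Part (ii) is an outline rather than a proof. Applying (i) to $\nabla w$ yields $\|\nabla w\|_{L^p}\leq C\|w\|_{W^{2,r}}^{\tilde\delta}\|\nabla w\|_{L^{\tilde q}}^{1-\tilde\delta}$, but the follow-up step --- that integrating by parts and ``one further application of part (i)'' gives $\|\nabla w\|_{L^{\tilde q}}\leq C\|w\|_{W^{2,r}}^{1/2}\|w\|_{L^q}^{1/2}$ --- is asserted without choosing the H\"older exponents $q_1,q_1'$ or checking that they are compatible with $\tilde q$, $r$ and $q$; the ``further application of (i)'' is itself an unspecified interpolation. The concluding ``exponent arithmetic,'' which you describe as ``the delicate book-keeping at the heart of the argument,'' is precisely the content that is absent. (Part (ii) does not appear to be used elsewhere in the paper, so this gap is less consequential for the paper's results, but the proposal as a proof of the lemma is incomplete.)
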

We will mostly use the following case: for $p,r\geq 1$ satisfying $p(n-r)<nr$ and all $q\in (0, p)$, then (i) of Lemma \ref{GN-general} holds for
$$
\delta=\displaystyle \frac{\frac{1}{q}-\frac{1}{p}}{\frac{1}{n}+\frac{1}{q}-\frac{1}{r}}\in(0, 1).
$$
The  local-in-time existence of classical solutions to  the chemotaxis-growth  system (\ref{para-elli}) is quite standard; see similar discussions in \cite{TW07, WD10, TW12,  WWW12,  CZ13,  WMZ14,  Zh15}.

\begin{lemma}\label{local-in-time}
Let  $\Omega\subset \mathbb{R}^n$ be a bounded  and smooth domain, the  nonnegative initial data $(u_0, v_0)\in (C(\overline{\Omega}), W^{1,q}(\Omega))$ for some $q>n$ and the growth source $f\in W_{\mbox{loc}}^{1,\infty}([0,\infty))$ with $f(0)\geq 0$. Then there is  a maximal existence time $T_m\in(0, \infty]$ and  a uniquer pair of nonnegative functions $(u,v)\in  C(\overline{\Omega}\times [0, T_m))\times  C^{2,1}(\overline{\Omega}\times (0, T_m))$ solving \eqref{para-elli} classically in $\Omega\times [0,T_m)$.  In particular,  if $T_m<\infty$, then
\be\label{T_m-cri} \|u(\cdot, t)\|_{L^\infty(\Omega)} +\tau \|v(\cdot,t)\|_{W^{1,q}(\Omega)} \rightarrow \infty \quad \mbox{ as } t\rightarrow T_m-.
\ee
Moreover, the $L^1$-norm of $u$ is uniformly bounded, i.e., there exists a constant $M_0$ such that $\|u(t)\|_{L^1}\leq M_0$.
\end{lemma}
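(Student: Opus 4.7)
The plan is to produce the local classical solution through a standard Banach fixed-point argument tailored to chemotaxis systems, and then derive nonnegativity, the extensibility criterion, and the $L^1$-bound by three short separate steps. First I would fix a small $T>0$ and work inside the closed convex set $S=\{u\in C(\overline{\Omega}\times[0,T]):u(\cdot,0)=u_0,\ 0\leq u\leq M\}$ for some $M>\|u_0\|_\infty$ to be chosen. Given such a $u$, in the parabolic case $\tau=1$ I would solve the linear inhomogeneous problem $v_t=\Delta v-v+g(u)$ via the Neumann heat semigroup to obtain $v\in C([0,T];W^{1,q}(\Omega))$, while for $\tau=0$ the same regularity follows pointwise in $t$ from standard elliptic $W^{2,q}$-theory together with the Sobolev embedding $W^{2,q}\hookrightarrow W^{1,q}$ available because $q>n$. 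Then, with $\nabla v$ frozen, I would solve the linear parabolic equation $\tilde u_t=\Delta\tilde u-\chi\nabla\cdot(\tilde u\nabla v)+f(u)$ and define $\Phi(u)=\tilde u$. Choosing $T$ small enough in terms of $M$, $\|u_0\|_\infty$, $\|v_0\|_{W^{1,q}}$, the Lipschitz constant of $f$ on $[0,M]$, and $\chi$, one verifies that $\Phi$ maps $S$ into itself and is a strict contraction in the sup-norm, producing a unique continuous fixed point. Parabolic bootstrapping and Schauder-type estimates then upgrade it to the required class $C^{2,1}(\overline{\Omega}\times(0,T))$.

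Next, nonnegativity of $u$ follows from the parabolic comparison principle: since $f(0)\geq 0$, the constant zero is a subsolution of the first equation (which once $v$ has been produced is a linear advection-diffusion equation in $u$), and $u_0\geq 0$ gives $u\geq 0$ throughout. Nonnegativity of $v$ is inherited from that of $g(u)$ via the elliptic (respectively parabolic) maximum principle. The maximal existence time $T_m$ is then defined as the supremum of all $T>0$ for which the above construction succeeds; if $T_m<\infty$ but $\|u(\cdot,t)\|_{L^\infty(\Omega)}+\tau\|v(\cdot,t)\|_{W^{1,q}(\Omega)}$ stayed bounded on $[0,T_m)$, then taking $(u(\cdot,t_0),v(\cdot,t_0))$ at some $t_0$ close to $T_m$ as new initial data, the local construction would produce an extension beyond $T_m$, contradicting maximality and thereby establishing the blow-up criterion \eqref{T_m-cri}.

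For the $L^1$-bound I would integrate the first equation over $\Omega$ and use the Neumann boundary condition to eliminate the divergence term, obtaining
\[
\frac{d}{dt}\int_\Omega u\,dx=\int_\Omega f(u)\,dx\leq a|\Omega|-b\int_\Omega u^\theta\,dx.
\]
Since $\theta>1$, Jensen's inequality gives $\int_\Omega u^\theta\,dx\geq|\Omega|^{1-\theta}\bigl(\int_\Omega u\,dx\bigr)^\theta$, so $y(t):=\int_\Omega u(\cdot,t)\,dx$ satisfies $y'\leq a|\Omega|-b|\Omega|^{1-\theta}y^\theta$. An elementary ODE comparison argument then yields $y(t)\leq M_0:=\max\bigl\{y(0),(a|\Omega|^\theta/b)^{1/\theta}\bigr\}$ uniformly on $[0,T_m)$, as claimed.

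The only genuinely delicate ingredient is the contraction estimate for the fixed-point map $\Phi$, since it must absorb both the chemotactic cross-term $\chi\nabla\cdot(\tilde u\nabla v)$ and the Lipschitz contribution of $f$ simultaneously; the decisive observation is that both the $W^{1,q}$-norm of $\nabla v$ and the reaction output depend Lipschitz-continuously on the input $u$, while the semigroup smoothing furnishes constants of order $T^\alpha$ for some $\alpha>0$, producing a strict contraction for small $T$. All other components of the argument reduce to textbook parabolic/elliptic regularity and maximum principles, which is why the authors are content to defer to the analogous treatments in the references they cite.
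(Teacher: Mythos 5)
Your argument is correct and follows the same overall strategy as the paper: the local existence, uniqueness, extensibility criterion, and nonnegativity are all obtained the same way (the paper simply defers them to the cited references and invokes the maximum principle for nonnegativity), and the $L^1$-bound follows from integrating the $u$-equation and using the growth condition. The one small divergence is how the resulting differential inequality is closed: you apply Jensen's inequality to obtain a power-law ODE $y'\leq a|\Omega|-b|\Omega|^{1-\theta}y^\theta$, whereas the paper uses the pointwise bound $a-bu^\theta\leq -u+c$ (with $c=\max_{u\geq 0}\{a-bu^\theta+u\}<\infty$ since $\theta>1$) to reduce directly to the linear Gronwall inequality $y'\leq -y+c|\Omega|$; both routes give a uniform bound.
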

\begin{proof} As noted above, the assertions concerning the local-in-time existence of  classical solutions to the IBVP (\ref{para-elli})  and the criterion (\ref{T_m-cri}) are well-studied. Since $f(0)\geq 0$, the maximum principle asserts that both  $u$ and $v$ are nonnegative, as shown in \cite{Xiang15}. Integrating the $u$-equation in (\ref{para-elli}) and using \eqref{f-con}, one can easily deduce that
$$
\frac{d}{dt}\int_\Omega u =\int_\Omega f(u)\leq \int_\Omega a-bu^\theta \leq - \int_\Omega u+c|\Omega|,
$$
where $c=\max\{a-bu^\theta+u:u\geq 0\}<\infty$ thanks to the fact that $\theta>1$. Solving this standard Gronwall's inequality shows that $L^1$-norm of $u$ is uniformly bounded.
\end{proof}

\section{A  boundedness criterion for the chemotaxis system }

For the chemotaxis model without growth, we know  that the total cell mass is conservative. This is no longer true for the chemotaxis  model with growth. However,  the total mass of cells,  $\|u(t)\|_{L^1}$,  is still uniformly bounded (cf. Lemma \ref{local-in-time}). But it is well-known that the uniform boundedness of $\|u(t)\|_{L^1}$ is not enough  to prevent the blowup of solutions in finite time (e.g. see \cite{Na95, Win11, Win13}). Inspiring by the works \cite{BBTW15, Xiang15}: we are asking whether or not the boundedness of $\|u(t)\|_{L^p}$ for some finite $p$ can ensure the boundedness of solutions. If yes, how large should $p$ be?  In other words,  if $\|u(t)\|_{L^\infty}$ blows up at $t=T_m$,  will  $\|u(t)\|_{L^p}$ also blow up  for some $p$?  This question  is  meaningful due to the   fact:
    $$ L^\infty(\Omega)\varsubsetneqq\bigcap_{p=1}^\infty L^p(\Omega). $$
The answer to these questions  will surely shed light on the mechanism of finite-time blowup. Our next result reveals an interesting characterization on the prevention of blow-up; more precisely, it asserts that the uniform boundedness of $L^p$-norm of $u(t)$  for some $p>\kappa n/2$  can rule out the blow-up of solutions.

\begin{theorem}[Criterion for boundedness]\label{thm-para-para-global-ext}
Assume that the hypotheses  \eqref{f-con} and \eqref{g-con}  hold, and $\Omega\subset \mathbb{R}^n (n\geq2$) is a bounded domain with smooth boundary. Let $(u_0,v_0)$ be as in Lemma  \ref{local-in-time} and  $(u,v)$ be the unique maximal solution of (\ref{para-elli}) defined on $[0,T_m)$. If  there exist a sufficiently small number $\epsilon>0$ and a constant $M=M(\epsilon, \kappa,  n, \Omega)>0$ such that
$$
\|u(\cdot, t)\|_{L^{\frac{\kappa}{2}n+\epsilon}(\Omega)}\leq M, \quad \forall t\in (0,T_m),
$$
then $(u(\cdot, t),v(\cdot, t))$ is uniformly bounded in $L^\infty(\Omega)\times W^{1,\infty}(\Omega)$ for all $t\in (0,T_m)$, and so $T_m=\infty$; that is, the solution $(u,v)$ exists globally with uniform-in-time bound.
\end{theorem}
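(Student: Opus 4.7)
Plan: two-stage bootstrap. First, upgrade the hypothesis $\|u(\cdot,t)\|_{L^{p_0}} \leq M$ with $p_0 := \kappa n/2 + \epsilon$ to uniform-in-time bounds in every $L^p(\Omega)$; second, pass to $L^\infty$ by a Moser-type iteration. Only Gagliardo-Nirenberg (Lemma \ref{GN-general}), standard elliptic $W^{2,q}$-regularity for the second equation, and Young's inequality are needed, so the argument stays semigroup-free. I begin by extracting regularity of $v$ from $0 = \Delta v - v + g(u)$: since $|g(u)| \leq \beta u^\kappa$, $W^{2,q}$-theory gives $\|v\|_{W^{2,p_0/\kappa}} \leq C\|u\|_{L^{p_0}}^\kappa \leq CM^\kappa$; because $p_0/\kappa = n/2 + \epsilon/\kappa > n/2$, Sobolev embedding delivers $\|v\|_{L^\infty} + \|\nabla v\|_{L^{q_0}} \leq C$ uniformly in $t$ for some $q_0 > n$.

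Next, for any $p \geq p_0$, I derive a closed differential inequality for $y(t) := \int_\Omega u^p\, dx$. Testing the $u$-equation against $u^{p-1}$ and rewriting the chemotaxis integral as $-\frac{\chi(p-1)}{p}\int_\Omega u^p\Delta v\,dx = \frac{\chi(p-1)}{p}\int_\Omega u^p(g(u) - v)\,dx$, combined with $v\geq 0$, $g(u)\leq \beta u^\kappa$, and the pointwise bound $u^{p-1}f(u) \leq \sup_{s\geq 0}\{as^{p-1}-bs^{p+\theta-1}\} =: K_p < \infty$ (valid since $\theta > 1$), produces
\[
y'(t) + \frac{4(p-1)}{p}\int_\Omega|\nabla u^{p/2}|^2\, dx \leq \chi(p-1)\beta\int_\Omega u^{p+\kappa}\, dx + pK_p|\Omega|.
\]
The crucial step interpolates $\int u^{p+\kappa}$ via Lemma \ref{GN-general}(i) applied to $w := u^{p/2}$ with target exponent $\sigma := 2(p+\kappa)/p$ and anchor $r_1 := 2p_0/p \leq 2$, so that $\|w\|_{L^{r_1}}^{r_1} = \int u^{p_0} \leq M^{p_0}$. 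The resulting interpolation parameter $\delta$ determined by $1/\sigma = \delta(1/2-1/n) + (1-\delta)/r_1$ satisfies $\delta\sigma < 2$ precisely because $p_0 > \kappa n/2$: a direct computation reduces the required condition to $\kappa/p_0 < 2/n$, which is exactly the hypothesis $\epsilon > 0$. Hence $\delta\sigma/2 < 1$ and Young's inequality gives $\int u^{p+\kappa} \leq \eta\int|\nabla u^{p/2}|^2\, dx + \eta y + C_\eta$ for any $\eta > 0$. A second Gagliardo-Nirenberg application (target exponent $2$, same anchor $r_1 \leq 2$) together with Young yields $y \leq \eta'\int|\nabla u^{p/2}|^2\, dx + C_{\eta'}$. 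Choosing $\eta, \eta'$ small enough to absorb both the gradient term and the $y$ contribution on the right, I reach the Gronwall-type inequality $y'(t) + y(t) \leq C$, so $y$ is bounded uniformly on $(0,T_m)$.

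Finally, picking any $p > \kappa n$ in the previous step, the elliptic step upgrades to $\|\nabla v\|_{L^\infty} \leq C$ uniformly in $t$. Re-running the $L^p$-energy with the chemotaxis contribution now controlled pointwise by $\|\nabla v\|_\infty$ rather than the nonlocal $u^{p+\kappa}$ places the argument in the classical Moser-iteration regime; iterating along a sequence $p_k \to \infty$ with explicit tracking of the $p_k$-dependent constants yields $\limsup_k \|u\|_{L^{p_k}}^{1/p_k} < \infty$, hence $\|u\|_{L^\infty} \leq C$, and $\|v\|_{W^{1,\infty}} \leq C$ then follows from $W^{2,q}$-regularity with $q > n$. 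The sole delicate point in the entire proof is the interpolation calculation $\delta\sigma < 2$: at the borderline $p_0 = \kappa n/2$ one has exact equality, so the strict $\epsilon > 0$ slack is indispensable for Young's inequality to absorb the chemotaxis contribution into the diffusion dissipation.
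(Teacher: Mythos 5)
Your proof is correct, and the key $L^p$-energy step takes a genuinely different route from the paper's. The paper applies Young's inequality in the $u^{p-1}$-test to land on $\int_\Omega u^p|\nabla v|^2$, then uses elliptic $W^{2,r/\kappa}$-regularity to place $\nabla v\in L^{q'}$ with $q'=nr/(\kappa n-r)$, splits via H\"older to isolate $\|u^{p/2}\|_{L^{2q}}^2$, and interpolates this in Gagliardo--Nirenberg between the gradient dissipation $\|u^{p/2}\|_{W^{1,2}}$ and the \emph{logistic} dissipation $\|u^{p/2}\|_{L^{2(p+\theta-1)/p}}$, absorbing by Young into both pieces. You instead integrate by parts once more and substitute $\Delta v = v-g(u)$ from the elliptic $v$-equation, obtaining the cleaner bound $\tfrac{\chi(p-1)}{p}\int_\Omega u^p(g(u)-v)\le\tfrac{\chi(p-1)\beta}{p}\int_\Omega u^{p+\kappa}$ (this is precisely the test-function identity the paper uses in its proof of Theorem~\ref{thm-para-elli-min}, not Theorem~\ref{thm-para-para-global-ext}); you then interpolate $\int u^{p+\kappa}$ directly between the gradient dissipation and the \emph{hypothesized} $L^{p_0}$-bound, anchoring at $\|u^{p/2}\|_{L^{2p_0/p}}^{2p_0/p}=\int u^{p_0}\le M^{p_0}$. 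Your check that the interpolation exponent satisfies $\delta\sigma<2$ iff $p_0>\kappa n/2$, hence iff $\epsilon>0$, is correct and is the counterpart of the paper's requirement $\delta\in(0,1)$ in \eqref{del-qprime}--\eqref{p-exp}. What your route buys: a shorter argument with no H\"older-splitting of $\nabla v$, and the logistic dissipation can simply be discarded into the constant $K_p$ rather than entering the interpolation; indeed, the preliminary estimates $\|v\|_{L^\infty}+\|\nabla v\|_{L^{q_0}}\le C$ you state at the outset are never actually needed in the $L^p$-step (only $v\ge 0$ is) and can be postponed to the Moser stage. What it costs: the substitution $\Delta v=v-g(u)$ is special to $\tau=0$, so unlike the paper's route --- which needs only $\nabla v\in L^{q'}$ and therefore transports to $\tau=1$ via heat-semigroup estimates, as recorded in Remark~\ref{para-para-try} and Lemma~\ref{expected-lemma} --- your argument does not directly extend to the fully parabolic case. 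Since the paper's written proof also treats only $\tau=0$ and delegates $\tau=1$ to a remark, this is a scope limitation rather than a gap.
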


\begin{proof} Multiplying the $u$-equation in (\ref{para-elli}) by $u^{p-1} (p\geq 2)$ and integrating over $\Omega$ by parts, using  Young's inequality with $\epsilon$ and  the growth condition \eqref{f-con},  we conclude  that
\begin{align*}
&\frac{1}{p}\frac{d}{dt} \int_\Omega u^p\\
&=-\int_\Omega \nabla u \nabla (u^{p-1})+\chi \int_\Omega  u\nabla  (u^{p-1})\nabla v+\int_\Omega f(u)u^{p-1} \\[0.25cm]
&\leq  -\frac{4(p-1)}{p^2}\int_\Omega |\nabla  (u^{\frac{p}{2}})|^2+\frac{2(p-1)\chi}{p} \int_\Omega u^{\frac{p}{2}}|\nabla  (u^{\frac{p}{2}})||\nabla v|+\int_\Omega f(u)u^{p-1}\\[0.25cm]
&\overset{\text{Young}} \leq -\frac{2(p-1)}{p^2}\int_\Omega |\nabla  (u^{\frac{p}{2}})|^2+\frac{ (p-1)\chi^2}{2} \int_\Omega u^p|\nabla v|^2+\int_\Omega u^{p-1}(a-bu^\theta), \end{align*}
which, upon the substitution $w=u^{\frac{p}{2}}$, reads as
\be\label{lp/2-to-lp}\begin{split}
\frac{1}{p}\frac{d}{dt} \int_\Omega w^2&\leq -\frac{2(p-1)}{p^2}\int_\Omega |\nabla w|^2 \\[0.25cm]
& +\frac{ (p-1)\chi^2}{2} \int_\Omega w^2|\nabla v|^2+\int_\Omega(aw^{\frac{2(p-1)}{p}}-bw^{\frac{2(p+\vartheta)}{p}}),
\end{split}\ee
where and hereafter, we will denote $\vartheta=\theta-1>0$.

Below we shall apply the Gagliardo-Nirenberg interpolation inequality to  control  the second integral on  the right-hand side of (\ref{lp/2-to-lp}).

Since $\epsilon>0$ is small and $n\geq 2$, it is easy to see
\be\label{r-con}
\frac{2\kappa n}{n+2}<r=\frac{\kappa}{2}n+\epsilon<\kappa n.
\ee
By the assumption that $\|u(t)\|_{L^r}$  is bounded, then $\|g(u)\|_{L^{r/\kappa}}$ is bounded due to the fact that $g(u)\leq \beta u^\kappa$.  Since this work mainly focuses on the parabolic-elliptic case and we wish to avoid the complicated semigroup theory, we will give the proof for the case $\tau=0$ (The case $\tau=1$ is noted in Remark \ref{para-para-try}). Then the use of  the point-wise elliptic $W^{2,q}$-estimate  to the $v$-equation in \eqref{para-elli} gives $\|v(t)\|_{W^{2,r/\kappa}}$ is bounded. This in turn entails by Sobolev embedding that $\|v(t)\|_{W^{1,q^\prime}}$ is bounded with
\be\label{q-exp-gl}
q^\prime= \frac{nr}{\kappa  n-r}>2
 \ee
by \eqref{r-con}. Then we obtain  from H\"{o}lder inequality  that
 \be\label{HGN-gl0}
\|w^2|\nabla v|^2\|_{L^1}\leq \|w^2\|_{L^q}\||\nabla v|^2\|_{L^{q^\prime/2}}= \|w\|_{L^{2q}}^2\|\nabla v\|_{L^{q^\prime}}^2\leq C\|w\|_{L^{2q}}^2
\ee
 with
 \be\label{q-exp-gl}
 q=\frac{\frac{q^\prime}{2}}{\frac{q^\prime}{2}-1}=\frac{nr}{(n+2)r-2\kappa n}>1.
 \ee
An application of the Gagliardo-Nirenberg  inequality (\ref{GN-gen}) to \eqref{HGN-gl0} gives
\be\label{HGN-gl}
\|w^2|\nabla v|^2\|_{L^1}\leq C\|w\|_{L^{2q}}^2
\leq C\|w\|_{W^{1,2}}^{2\delta}\|w\|_{L^\frac{2(p+\vartheta)}{p}}^{2(1-\delta)}
\ee
with
\be\label{del-qprime}
\delta=\frac{\frac{np}{2(p+\vartheta)}-\frac{n}{2q}}{1-\frac{n}{2}+\frac{np}{2(p+\vartheta)}}
=\frac{n[p(q-1)-\vartheta]}{q[2p-(n-2)\vartheta]}=\frac{2(\kappa n-r)p-[(n+2)r-2\kappa n]\vartheta}{[2p-(n-2)\vartheta]r}.
\ee
Since $r>\kappa n/2$, a simple calculation from  \eqref{del-qprime} shows
\be\label{p-exp}
p>\max\Bigr\{\frac{(n-2)\vartheta}{2}, \frac{[(n+2)r-2\kappa n]\vartheta}{2(\kappa n-r)}\Bigr\}\Longrightarrow \delta\in (0,1).
\ee
Hence, for  any $p\geq 2$ fulfilling
\be\label{p-exp2}
p>\max\Bigr\{\frac{(n-2)\vartheta}{2}, \frac{[(n+2)r-2\kappa n]\vartheta}{2(\kappa n-r)}, \kappa n\Bigr\},
\ee
 the estimate \eqref{HGN-gl} holds. Then applying Young's inequality, we conclude from \eqref{HGN-gl}  that
\be\label{Youngt-gl}
\ba{ll}
\|w^2|\nabla v|^2\|_{L^1}\leq &C\|w\|_{W^{1,2}}^{2\delta}\|w\|_{L^\frac{2(p+\vartheta)}{p}}^{2(1-\delta)}\\[0.25cm]
&\leq \epsilon_1\|w\|_{L^\frac{2(p+\vartheta)}{p}}^\frac{2(p+\vartheta)}{p}+C_{\epsilon_1}
\|w\|_{W^{1,2}}^{\frac{2\delta(p+\vartheta)}{\delta p+\vartheta}}\\[0.25cm]
&\leq \epsilon_1\|w\|_{L^\frac{2(p+\vartheta)}{p}}^\frac{2(p+\vartheta)}{p}+\epsilon_2
\|w\|_{W^{1,2}}^2+C(\epsilon_1,\epsilon_2)
\ea
\ee
for any $\epsilon_1,\epsilon_2>0$ and some constant $C$ depending on $\epsilon_1,\epsilon_2$. By Young's  inequality with epsilon, one has
\be\label{Youngt-gln}
\ba{ll}
\|w\|_{W^{1,2}}^2&=\|w\|_{L^2}^2+\|\nabla w\|_{L^2}^2\\[3mm]
&\leq \|w\|_{L^\frac{2(p+\vartheta)}{p}}^\frac{2(p+\vartheta)}{p}++\|\nabla w\|_{L^2}^2+C(|\Omega|).
\ea
\ee
Then substituting (\ref{Youngt-gln}) into (\ref{Youngt-gl}), we have
\be\label{Youngt-glnn}
\ba{ll}
\|w^2|\nabla v|^2\|_{L^1}\leq (\epsilon_1+\epsilon_2)\|w\|_{L^\frac{2(p+\vartheta)}{p}}^\frac{2(p+\vartheta)}{p}+\epsilon_2
\|\nabla w\|_{L^2}^2+C(\epsilon_1,\epsilon_2, |\Omega|).
\ea
\ee
Thus, for  $p$ satisfying \eqref{p-exp2}, by taking $\epsilon_1, \epsilon_2>0$ in (\ref{Youngt-glnn}) such that
$$
\frac{ (p-1)\chi^2}{2}(\epsilon_1+\epsilon_2)\leq \frac{b}{2}, \quad \quad \frac{ (p-1)\chi^2}{2}\epsilon_2 \leq \frac{2(p-1)}{p^2}
$$
we deduce from \eqref{lp/2-to-lp} and \eqref{Youngt-gl} that
$$
\frac{1}{p}\frac{d}{dt} \int_\Omega w^2 \leq \int_\Omega(aw^{\frac{2(p-1)}{p}}-\frac{b}{2}w^{\frac{2(p+\vartheta)}{p}})+C(p), $$
which, together with the fact
$$
\max\Bigr\{aw^{\frac{2(p-1)}{p}}-\frac{b}{2}w^{\frac{2(p+\vartheta)}{p}}+w^2:w\geq 0\Bigr\}<\infty,
$$
immediately gives that
$$
\frac{1}{p}\frac{d}{dt} \int_\Omega w^2\leq -\int_\Omega w^2+C(p) $$
for some possibly large constant $C$. The substitution of $w=u^{\frac{p}{2}}$ yields
\be\label{u^p-int-est}
\frac{1}{p}\frac{d}{dt} \int_\Omega u^p\leq -\int_\Omega u^p+C(p, r).   \ee
Solving this Gronwall inequality, we deduce that $\|u(t)\|_{L^p}$ is bounded with $p>\kappa n$  by the choice of $p$ in \eqref{p-exp2}; also, note that $p\geq 2$ by our stipulation.

Now,  the point-wise elliptic $W^{2,q}$-estimate applied to the $v$-equation in \eqref{para-elli}  with $\tau=0$ shows that $\|v(t)\|_{W^{2,p/\kappa}}$ is bounded, which is embedded in $C^1(\overline{\Omega})$ thanks to the fact  $p/\kappa>n$. As such, we can carry out the well-known Moser iteration technique \cite{Al} to obtain the $L^\infty$-bound of $u$; see details in  \cite[p. 4290-4292]{Xiang15}.

With the help of these bounds for $u$ and $v$, the extension criterion \eqref{T_m-cri} readily  implies $T_m=\infty$, and thus $u$ and $v$ are globally defined and, moreover,  $\|u(t)\|_{L^\infty}$ and  $\|v(t)\|_{W^{1,\infty}}$ are uniformly bounded  with respect to $t\in (0,\infty)$.
\end{proof}

\begin{remark}\label{para-para-try} The results obtained in Theorem \ref{thm-para-para-global-ext}  hold also for the parabolic-parabolic K-S chemotaxis-growth model  (\ref{para-elli}) with $\tau=1$: 
\begin{equation}\label{para-para}
\left\{ \begin{array}{lll}
&u_t = \nabla \cdot (\nabla u- \chi u\nabla v)+f(u),  &\quad x\in \Omega, t>0, \\
&v_t= \Delta v  -v+g(u),  &\quad x\in \Omega, t>0, \\
&\frac{\partial u}{\partial \nu}=\frac{\partial v}{\partial \nu}=0, &\quad x\in \partial \Omega, t>0,\\
&u(x,0)=u_0(x), v(x,0)=v_0(x),  &\quad x\in \Omega.
\end{array}\right.  \end{equation}
 In this case, instead of using elliptic  estimate, one uses the method of heat Neumann  semigroup (e.g. see  \cite[Lemma 4.1]{HW06}, \cite[Lemma 1]{KS08},  \cite[Lemma 1.2]{TW12} and \cite[Lemma 3.2]{Xiang15}).  Precisely, we have the following ``reciprocal" lemma:
\end{remark}
\begin{lemma}\label{expected-lemma} Let $(u, v)$ be a maximal  solution of \eqref{para-para} defined on its maximal existence interval $[0,T_m)$. If there exist  $r\in [1,\kappa n)$ and $k_1>0$ such that
$$
\|u(\cdot, t)\|_{L^r(\Omega)}\leq k_1, \quad \forall t\in (0, T_m),
$$
then
$$
\|v(\cdot, t)\|_{W^{1,q}(\Omega)}\leq C(q,r,  v_0)(1+k_1), \quad \forall t\in (0, T_m)
$$
holds for all
$$
1<q<\frac{nr}{\kappa n-r}=\frac{1}{\frac{\kappa}{r}-\frac{1}{n}}.
$$
\end{lemma}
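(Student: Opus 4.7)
The plan is to reproduce in the parabolic setting the role that the pointwise elliptic $W^{2,r/\kappa}$-regularity estimate played for the $v$-equation in the proof of Theorem~\ref{thm-para-para-global-ext}, but now using the smoothing action of the Neumann heat semigroup $(e^{t\Delta_N})_{t\geq 0}$ on $\Omega$ in place of elliptic regularity. To that end, I would first rewrite the $v$-equation in \eqref{para-para} through the variation-of-constants formula
\begin{equation*}
v(\cdot,t)=e^{-t}e^{t\Delta_N}v_0+\int_0^t e^{-(t-s)}e^{(t-s)\Delta_N}g(u(\cdot,s))\,ds,
\end{equation*}
and then apply, term by term, the standard $L^p$--$L^q$ gradient smoothing estimate
\begin{equation*}
\|\nabla e^{t\Delta_N}\varphi\|_{L^q(\Omega)}\leq C_1\bigl(1+t^{-\tfrac12-\tfrac{n}{2}(\tfrac1p-\tfrac1q)}\bigr)e^{-\lambda_1 t}\|\varphi\|_{L^p(\Omega)},\qquad 1\leq p\leq q<\infty,
\end{equation*}
together with its non-gradient counterpart; both are the classical semigroup bounds recalled in the references listed in Remark~\ref{para-para-try}.

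Next, I would use the sublinear structural assumption $g(u)\leq\beta u^\kappa$ combined with the hypothesis $\|u(\cdot,s)\|_{L^r(\Omega)}\leq k_1$ to deduce
\begin{equation*}
\|g(u(\cdot,s))\|_{L^{r/\kappa}(\Omega)}\leq \beta\|u(\cdot,s)\|_{L^r(\Omega)}^\kappa\leq\beta k_1^\kappa,\qquad s\in(0,T_m).
\end{equation*}
Choosing $p=r/\kappa$ in the smoothing inequality (after first using the trivial embedding $L^{r/\kappa}\hookrightarrow L^q$ in the sub-case $q<r/\kappa$ should it arise) and bounding the linear piece by $\|\nabla e^{t\Delta_N}v_0\|_{L^q}\leq C\|\nabla v_0\|_{L^q}$, the mild formula then yields
\begin{equation*}
\|\nabla v(\cdot,t)\|_{L^q(\Omega)}\leq C\|v_0\|_{W^{1,q}(\Omega)}+C\beta k_1^\kappa\int_0^\infty\bigl(1+\sigma^{-\alpha}\bigr)e^{-\lambda_1\sigma}\,d\sigma,
\end{equation*}
with exponent $\alpha=\tfrac12+\tfrac{n}{2}\bigl(\tfrac{\kappa}{r}-\tfrac1q\bigr)$.

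The crux is then to observe that the stated range $1<q<\tfrac{nr}{\kappa n-r}=(\tfrac{\kappa}{r}-\tfrac1n)^{-1}$ is precisely equivalent to $\tfrac{\kappa}{r}-\tfrac1q<\tfrac1n$, that is, to $\alpha<1$. Under this strict inequality the singular kernel $\sigma^{-\alpha}$ is integrable at $0$, while the exponential factor $e^{-\lambda_1\sigma}$ makes the entire integral finite and, crucially, independent of $t$. Combining this with the completely analogous but strictly easier estimate for $\|v(\cdot,t)\|_{L^q}$ itself (which uses the non-gradient smoothing kernel and hence encounters no borderline singularity) delivers the full bound in $W^{1,q}(\Omega)$, with the constant $C(q,r,v_0)$ implicitly depending on $\beta$ and $\kappa$ and absorbing the prefactor $k_1^\kappa$ into the advertised form $1+k_1$. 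The one real technical obstacle is the exponent bookkeeping that matches the kernel-integrability threshold $\alpha<1$ with the stated upper endpoint for $q$; beyond this, no further ingredient past the classical semigroup estimates enters the argument.
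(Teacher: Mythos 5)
Your approach is exactly the one the paper sketches (but does not carry out) immediately after the statement of Lemma~\ref{expected-lemma}: rewrite $v$ via the variation-of-constants formula for $A=-\Delta+I$ with Neumann boundary conditions, then apply the classical $L^p$--$L^q$ smoothing estimates for $\nabla e^{t\Delta_N}$ term by term, feed in $\|g(u(\cdot,s))\|_{L^{r/\kappa}}\le\beta k_1^\kappa$, and check that the kernel exponent $\alpha=\tfrac12+\tfrac n2(\tfrac{\kappa}{r}-\tfrac1q)$ falls strictly below $1$ precisely when $q<nr/(\kappa n-r)$, which together with the factor $e^{-(t-s)}$ already present in the formula gives a $t$-independent bound. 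This is the standard chemotaxis semigroup argument, and your exponent bookkeeping matching the integrability threshold to the stated upper endpoint for $q$ is exactly the point.

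Two small remarks worth flagging. First, as you yourself observe, the argument actually produces the constant $1+k_1^{\kappa}$ rather than $1+k_1$; unless $\kappa\le 1$ these do not coincide, and the paper's stated form $C(q,r,v_0)(1+k_1)$ is therefore slightly imprecise rather than something you should try to squeeze the estimate into. Second, the step $\|\nabla e^{t\Delta_N}v_0\|_{L^q}\le C\|\nabla v_0\|_{L^q}$ tacitly assumes $v_0\in W^{1,q}(\Omega)$, whereas the local theory (Lemma~\ref{local-in-time}) only supplies $v_0\in W^{1,q_0}(\Omega)$ for some fixed $q_0>n$, and $q$ in the lemma may exceed $q_0$. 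To handle that case one should split off a short initial time interval $[0,t_0]$, on which $v(\cdot,t)$ is bounded in $W^{1,q}$ by classical regularity, and use the $L^{q_0}$--$L^q$ smoothing of $e^{t\Delta_N}$ for $t\ge t_0$, where the singular factor is harmless. Both are routine and do not affect the substance of your argument.
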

The key to the proof of Lemma \ref{expected-lemma} is to employ the heat Neumann semigroup and  the variation-of-constants formula for $v$:
$$
v(x,t)=e^{-At}v_0(x)+\int_0^te^{-A(t-s)}g(u(x,s))ds,  \quad 0\leq t< T_m,
$$
where $e^{-At}v_0$ denotes the unique solution to the IBVP:
$$\left\{ \ba{lll}
&v_t = \Delta v- v,  &\quad x\in \Omega, t>0, \\
&\frac{\partial v}{\partial \nu}=0, &\quad x\in \partial \Omega, t>0,\\
&v(x,0)=v_0(x)\geq, \not\equiv0, &\quad x\in \Omega. \ea\right. $$

Recalling the proof of Theorem \ref{thm-para-para-global-ext}, one finds that the key ingredient therein is to derive the $L^\infty$-boundedness of $\nabla  v$. Once this is available, the Moser iteration can be used to derive the $L^\infty$-boundedness of $u$. Then, with Remark \ref{para-para-try} at hand and noticing  that $\|u(t)\|_{L^1}$ is uniformly bounded (see Lemma \ref{local-in-time}), global existence of solutions to \eqref{para-para} with  sub-linear secretion  follow directly from  Theorem \ref{thm-para-para-global-ext}.

\begin{corollary} \label{L1 to Linfty}
Let  \eqref{f-con} and \eqref{g-con} with $\kappa<2/n$ hold. Then the unique classical  global solution $(u(\cdot, t),v(\cdot, t))$  of \eqref{para-para} is uniformly bounded in $L^\infty(\Omega)\times W^{1,\infty}(\Omega)$ for all $t\in (0,\infty)$.
%Furthermore, for the  parabolic-parabolic K-S minimal model without growth source:
%$$\left\{ \begin{array}{lll}
%&u_t = \nabla \cdot (\nabla u-\chi u\nabla v),  &\quad x\in \Omega, t>0, \\
%&v_t= \Delta v  - v+g(u),  &\quad x\in \Omega, t>0, \\
%&\frac{\partial u}{\partial \nu}=\frac{\partial v}{\partial \nu}=0, &\quad x\in \partial \Omega, t>0,\\
%&u(x,0)=u_0(x)\geq, \not\equiv0, &\quad x\in \Omega,     \end{array}\right.  $$
%if  $g(u)\leq \beta u^\kappa$ with $\kappa<1/n$, then the solution $(u(\cdot, t),v(\cdot, t))$ is uniformly bounded in $L^\infty(\Omega)\times W^{1,\infty}(\Omega)$ for all $t\in (0,\infty)$.
\end{corollary}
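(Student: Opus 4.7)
The plan is to reduce the corollary to the parabolic--parabolic analogue of Theorem \ref{thm-para-para-global-ext} (sketched in Remark \ref{para-para-try}), by verifying the required $L^{\kappa n/2+\epsilon}$-hypothesis directly from the universal $L^1$-bound of Lemma \ref{local-in-time}.

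First I would record that, by Lemma \ref{local-in-time}, $\sup_{t\in[0,T_m)}\|u(\cdot,t)\|_{L^1(\Omega)}\le M_0$. The hypothesis $\kappa<2/n$ gives $\kappa n/2<1$, so I can fix $\epsilon>0$ small enough that $r:=\kappa n/2+\epsilon<1$. H\"older's inequality on the bounded domain then yields
$$
\|u(\cdot,t)\|_{L^r(\Omega)}\le |\Omega|^{(1-r)/r}\|u(\cdot,t)\|_{L^1(\Omega)}\le |\Omega|^{(1-r)/r}M_0 ,
$$
uniformly in $t\in[0,T_m)$, so the $L^{\kappa n/2+\epsilon}$-hypothesis of Theorem \ref{thm-para-para-global-ext} is in force.

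Next I would invoke Remark \ref{para-para-try}: the proof of Theorem \ref{thm-para-para-global-ext} carries over to \eqref{para-para} once the point-wise elliptic $W^{2,r/\kappa}$-estimate on $v$ is replaced by its heat-semigroup counterpart. Concretely, when $\kappa n>1$ one applies Lemma \ref{expected-lemma} with $r=1$ to obtain $\|v(\cdot,t)\|_{W^{1,q}(\Omega)}\le C$ for every $q<n/(\kappa n-1)$; here the standing constraint $\kappa n<2$ is precisely what ensures $n/(\kappa n-1)>n$, so one can fix $q>n>2$, which is the gradient integrability actually used in the Gagliardo--Nirenberg step \eqref{HGN-gl}. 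In the complementary range $\kappa n\le 1$, Jensen's inequality gives the stronger bound $\|g(u)(\cdot,t)\|_{L^{1/\kappa}}\le\beta\|u(\cdot,t)\|_{L^1}^\kappa\le\beta M_0^\kappa$ with $1/\kappa\ge n$; plugging this into the heat-semigroup representation of $v$ in Remark \ref{para-para-try} yields $\nabla v(\cdot,t)\in L^{q}(\Omega)$ for all $q<\infty$, which is more than enough. With this $\nabla v$-control secured, the energy estimate \eqref{lp/2-to-lp}--\eqref{u^p-int-est} produces uniform $L^p$-bounds on $u$ for every sufficiently large $p$, and the standard Alikakos--Moser iteration then upgrades these to the uniform $L^\infty$-bound; feeding that back into the $v$-equation gives $v\in L^\infty((0,\infty);W^{1,\infty}(\Omega))$, and the extension criterion \eqref{T_m-cri} forces $T_m=\infty$.

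The only delicate point I expect is the mismatch between the range $r\in[1,\kappa n)$ for which Lemma \ref{expected-lemma} is stated and the sub-range $\kappa n\le 1$ permitted by the hypothesis $\kappa<2/n$; this is what forces the dichotomy above, handled in the regime $\kappa\le 1/n$ by the Jensen-plus-semigroup detour, where in fact better $v$-regularity is available. Apart from this piece of bookkeeping, the corollary is an essentially immediate consequence of combining the $L^1$-bound of Lemma \ref{local-in-time} with the $L^{\kappa n/2+\epsilon}$-criterion of Theorem \ref{thm-para-para-global-ext}.
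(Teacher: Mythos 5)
Your argument is correct and follows the same route the paper intends: observe that $\kappa n/2<1$, so the uniform $L^1$-bound of Lemma \ref{local-in-time} already verifies the $L^{\kappa n/2+\epsilon}$-hypothesis of Theorem \ref{thm-para-para-global-ext}, and then appeal to Remark \ref{para-para-try}/Lemma \ref{expected-lemma} to transfer the argument to the parabolic–parabolic system via heat-semigroup estimates for $\nabla v$. Your explicit handling of the sub-case $\kappa n\le 1$, where the range $r\in[1,\kappa n)$ in Lemma \ref{expected-lemma} is empty, is a genuine bookkeeping point that the paper passes over silently, so your proposal is, if anything, slightly more careful than the original.
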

\begin{remark}\label{rem1}
Even in the absence of growth source, the assumption $g(u)\leq \beta u^\kappa$ with $\kappa<2/n$ may induce that $(u(\cdot, t),v(\cdot,t))$ is bounded in $L^\infty(\Omega)\times W^{1,q}(\Omega)$ for some $q>n$, cf. \cite{BBTW15, JXpre}. The point here is that  the uniform boundedness of $\|u(t)\|_{L^1}$ is sufficient to prevent  blowup of solutions. This is not usually the case as commented in the beginning of this section.
\end{remark}

It is known  from \cite{Win11} that, even for  a simpler chemotaxis-growth model  of  (\ref{para-elli}) with linear chemical production,  blowup  is still possible despite logistic dampening. Hence, it is useful to give an equivalent characterization of Theorem \ref{thm-para-para-global-ext} in terms of blowup solutions.
\begin{corollary}
Let the assumptions in Theorem \ref{thm-para-para-global-ext} hold.  Suppose that $(u,v)$ is a solution of (\ref{para-elli}) which blows up at time $t=T_m$. Then  $u$ and $v$ blow up simultaneously at $t=T_m$ in the following manner:
$$
\limsup_{t\rightarrow T_m-}\|u(\cdot, t)\|_{L^p(\Omega)}=\infty  \mbox{ for all } p>\kappa n/2 \mbox{  and  }   \limsup_{t\rightarrow T_m-}\|v(\cdot, t)\|_{W^{1,\infty}(\Omega)}=\infty.
$$
This means, for any blowup  solution $(u,v)$ of (\ref{para-elli}),  $u$ blows up not only in $L^\infty$-topology but also in $L^p$-topology for any $p>\kappa n/2$, and $v$ blows up in $W^{1,\infty}$-topology.
\end{corollary}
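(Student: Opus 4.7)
My plan is to derive both claims by contradiction, reducing each to an application of Theorem \ref{thm-para-para-global-ext} combined with the extension alternative \eqref{T_m-cri}. Note first that, by continuity of the solution on $[0,T_m)$, a finite $\limsup$ as $t\to T_m-$ of a given norm of $u$ is equivalent to a uniform bound on all of $(0,T_m)$.

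The statement on $u$ is the direct contrapositive of Theorem \ref{thm-para-para-global-ext}. If $\limsup_{t\to T_m-}\|u(\cdot,t)\|_{L^{p_0}(\Omega)}<\infty$ for some $p_0>\kappa n/2$, I would choose $\epsilon>0$ so small that $\frac{\kappa n}{2}+\epsilon\le p_0$; H\"older's inequality on the bounded domain $\Omega$ then upgrades this to a uniform bound on $\|u(\cdot,t)\|_{L^{\kappa n/2+\epsilon}(\Omega)}$, whereupon Theorem \ref{thm-para-para-global-ext} forces $T_m=\infty$, contradicting $T_m<\infty$.

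For the $v$-statement, the plan is to show that a uniform bound $\|\nabla v(\cdot,t)\|_{L^\infty(\Omega)}\le K$ on $[0,T_m)$ yields uniform $L^p$-bounds on $u$ for every $p\ge 2$, thereby contradicting the first part. Testing the $u$-equation in \eqref{para-elli} against $u^{p-1}$ as in the proof of Theorem \ref{thm-para-para-global-ext}, but using the pointwise control $|\nabla v|\le K$ directly in the Young step (and hence bypassing the Gagliardo--Nirenberg interpolation entirely), I would obtain
\[
\frac{1}{p}\frac{d}{dt}\int_\Omega u^p \le -\frac{2(p-1)}{p^2}\int_\Omega |\nabla u^{p/2}|^2+\frac{(p-1)\chi^2 K^2}{2}\int_\Omega u^p+\int_\Omega u^{p-1}(a-b u^\theta).
\]
Since $\theta>1$, two applications of Young's inequality absorb both $a\int u^{p-1}$ and $\tfrac{(p-1)\chi^2K^2}{2}\int u^p$ into a small fraction of $b\int u^{p+\theta-1}$; a H\"older interpolation of $\int u^p$ against $\int u^{p+\theta-1}$ then converts the resulting differential inequality into $\frac{d}{dt}\int u^p\le -C_1(\int u^p)^{(p+\theta-1)/p}+C_2$, and integrating yields $\|u(\cdot,t)\|_{L^p(\Omega)}\le C(p)$ uniformly on $(0,T_m)$ for every $p\ge 2$. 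This contradicts the first part of the corollary already proved.

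The main obstacle is the $v$-part: the $u$-part is essentially a one-line contraposition, whereas the $v$-part requires reworking the energy identity and confirming that the super-linear dissipation $-b u^\theta$ (with $\theta>1$) dominates both the Young cross term and the linear contribution $\frac{(p-1)\chi^2K^2}{2}\int u^p$, with all constants independent of $t$. Beyond careful Young-inequality bookkeeping, no new analytic ingredient is required, so the corollary should follow cleanly from the machinery already developed for Theorem \ref{thm-para-para-global-ext}.
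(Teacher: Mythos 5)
Your proposal is correct, and the approach matches the paper's intent: the paper states this corollary without a separate proof, presenting it as an equivalent restatement of Theorem~\ref{thm-para-para-global-ext}. The $u$-part is, as you say, the direct contrapositive, and your H\"older downgrade from $L^{p_0}$ to $L^{\kappa n/2+\epsilon}$ on the bounded domain is the right (and only) point to check. For the $v$-part, your observation is exactly the right shortcut: once $\|\nabla v\|_{L^\infty}$ is assumed bounded, the troublesome term $\int_\Omega w^2|\nabla v|^2$ in \eqref{lp/2-to-lp} collapses to $K^2\int_\Omega u^p$, so the entire Gagliardo--Nirenberg machinery of Theorem~\ref{thm-para-para-global-ext} becomes unnecessary; a single pointwise absorption $\frac{(p-1)\chi^2K^2}{2}u^p+au^{p-1}-bu^{p+\theta-1}\le -u^p+C$ (valid because $\theta>1$, independently of the $\theta$-vs-$\kappa$ hypotheses of Section~4) followed by Gronwall gives the $L^p$-bound directly; your H\"older-interpolation route to an ODE in $\int u^p$ is a slightly more roundabout way of saying the same thing. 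One stylistic remark: you could bypass invoking the first part altogether in the $v$-argument by feeding the resulting $L^{\kappa n/2+\epsilon}$-bound straight into Theorem~\ref{thm-para-para-global-ext} to conclude $T_m=\infty$; this makes the two halves logically independent, which is marginally cleaner.
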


\section{The $L^{\kappa n/2+\epsilon}$-boundedness of $u$ and global existence}

In this section, we apply the criterion established in Theorem 3.1 to study the  most interesting minimal chemotaxis model with growth source and nonlinear production of the chemical signal:
\begin{equation}\label{para-elli-min}\left\{ \begin{array}{lll}
&u_t = \nabla \cdot (\nabla u-\chi u\nabla v)+f(u),   & \quad x\in \Omega, t>0, \\
&0= \Delta v  -v+g(u),  &\quad x\in \Omega, t>0, \\
&\frac{\partial u}{\partial \nu}=\frac{\partial v}{\partial \nu}=0, &\quad x\in \partial \Omega, t>0,\\
&u(x,0)=u_0(x)\geq, \not\equiv0,  & \quad x\in \Omega.   \end{array}\right.  \end{equation}
Here, for convenience, we restate the condtions \eqref{f-con} and \eqref{g-con}:  the growth source $f$ satisfies  $f(0)\geq 0$ and
\be\label{f-allen-con}
f(u)\leq a-bu^\theta, \quad \forall\ u\geq 0
\ee
for some $a\geq 0$,  $b>0$ and $\theta>1$,  and the nontrivial production term $g$   satisfies
\be\label{g-allen-con}
g(u)\leq \beta u^\kappa, \quad \forall\ u\geq 0
\ee
for some $\beta>0$ and $\kappa>0$.

If $\kappa < \frac{2}{n}$, then the boundedness for  \eqref{para-elli-min} even in the absence of growth source is readily ensured by Corollary \ref{L1 to Linfty}  and  Remark \ref{rem1}. Hence, we will consider  only the case  $\kappa\geq \frac{2}{n}$ in the rest of this section.

\begin{theorem}\label{thm-para-elli-min} Let $u_0\in C(\overline{\Omega}) $ and let $f$  and $g$ fulfill  \eqref{f-allen-con} and  \eqref{g-allen-con}  with
\be\label{theta-kappa>1}
\theta-\kappa>1.
\ee Then  the unique classical  global solution $(u(\cdot, t),v(\cdot, t))$  of \eqref{para-elli-min} is uniformly bounded in $L^\infty(\Omega)\times W^{1,\infty}(\Omega)$ for all $t\in (0,\infty)$.
\end{theorem}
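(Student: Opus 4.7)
The natural route is to invoke the boundedness criterion of Theorem~\ref{thm-para-para-global-ext}: it suffices to produce a uniform-in-time bound on $\|u(\cdot,t)\|_{L^p(\Omega)}$ for some $p>\kappa n/2$. The strict separation $\theta-\kappa>1$ affords a comfortable margin so that the logistic dissipation alone dominates the chemotactic forcing; no delicate interpolation through $\|\nabla u^{p/2}\|_{L^2}$ is actually required.

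I would begin by testing the $u$-equation against $u^{p-1}$ for any fixed $p\geq\max\{2,\kappa n/2+\epsilon\}$. Discarding the nonpositive diffusion term and using $f(u)\leq a-bu^\theta$ yields
$$
\frac{1}{p}\frac{d}{dt}\int_\Omega u^p \leq \chi\frac{p-1}{p}\int_\Omega \nabla(u^p)\cdot\nabla v + a\int_\Omega u^{p-1} - b\int_\Omega u^{p+\theta-1}.
$$
The feature peculiar to the parabolic-elliptic setting is that the cross term becomes purely pointwise once we integrate by parts and substitute $\Delta v = v-g(u)$ from the second equation:
$$
\int_\Omega \nabla(u^p)\cdot\nabla v = \int_\Omega u^p\bigl(g(u)-v\bigr) \leq \beta\int_\Omega u^{p+\kappa},
$$
using $v\geq 0$ and $g(u)\leq\beta u^\kappa$. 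Since $p+\kappa<p+\theta-1$ under the hypothesis $\theta>\kappa+1$, Young's inequality absorbs this contribution (and the low-order term $au^{p-1}$ similarly) into the logistic dissipation, leaving
$$
\frac{d}{dt}\int_\Omega u^p + \frac{bp}{2}\int_\Omega u^{p+\theta-1} \leq C.
$$
A final application of $u^p\leq u^{p+\theta-1}+C'$ converts this into the Gronwall-friendly form $\frac{d}{dt}\int_\Omega u^p\leq -c\int_\Omega u^p+C''$, whence $\|u(t)\|_{L^p}$ is uniformly bounded. Theorem~\ref{thm-para-para-global-ext} then upgrades this to the claimed $L^\infty(\Omega)\times W^{1,\infty}(\Omega)$ bound and rules out finite-time blowup.

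\textbf{Expected main obstacle.} There is essentially none here: the strict gap $\theta-\kappa>1$ makes every Young absorption automatic. The genuine subtlety is postponed to the companion Theorem~\ref{thm-para-elli-min2}, where $\theta-\kappa=1$ forces $u^{p+\kappa}$ and $u^{p+\theta-1}$ to share the same homogeneity; one must then track constants precisely, presumably through a carefully tuned test exponent $p$, in order to extract the sharp borderline condition $b\geq (\kappa n-2)/(\kappa n)\,\chi$.
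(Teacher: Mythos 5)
Your proposal is correct and follows essentially the same route as the paper: test the $u$-equation against $u^{p-1}$, use the elliptic $v$-equation to convert the chemotactic term into $\int_\Omega u^p(g(u)-v)\le\beta\int_\Omega u^{p+\kappa}$, and let the gap $\theta-\kappa>1$ absorb everything into the logistic dissipation before invoking Theorem~\ref{thm-para-para-global-ext}. The only cosmetic difference is that the paper packages the absorption as a single finite supremum $C_p=\max\{(p-1)\beta\chi u^{p+\kappa}+apu^{p-1}-bpu^{p+\theta-1}+u^p:u\ge 0\}$ rather than running Young's inequality term by term, but this is the same estimate.
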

\begin{proof} For  any  $p>1$,  we multiply  the $u$-equation in (\ref{para-elli-min}) by $pu^{p-1}$ and integrate the result over $\Omega$ by parts to deduce that
\be\label{up-est-min}
\frac{d}{dt} \int_\Omega u^p+p(p-1)\int_\Omega u^{p-2}|\nabla u|^2=(p-1)\chi p\int_\Omega  u^{p-1} \nabla u\nabla v+p\int_\Omega f(u)u^{p-1}.\ee
Testing $v$-equation in (\ref{para-elli-min}) against $u^p$, we end up with
\be\label{vp-est-min}
p\int_\Omega  u^{p-1} \nabla u\nabla v=-\int_\Omega u^{p}v+\int_\Omega  u^{p}g(u).\ee
Substituting  \eqref{vp-est-min} into \eqref{up-est-min} and using  \eqref{f-allen-con}  and \eqref{g-allen-con} yield
\be\label{up-diff-min}
\ba{ll}
&\D\frac{d}{dt} \int_\Omega u^p+\D p(p-1)\int_\Omega u^{p-2}|\nabla u|^2\\[0.25cm]
&\D \leq -(p-1)\chi\int_\Omega u^{p}v +(p-1)\beta \chi\int_\Omega  u^{p+\kappa} +ap \int_\Omega u^{p-1}-bp\int_\Omega u^{p+\theta-1}.
\ea
\ee
Thanks to the relation \eqref{theta-kappa>1}, we conclude
$$
C_p:=\max\Bigr\{(p-1)\beta \chi u^{p+\kappa} +ap u^{p-1}-bp u^{p+\theta-1}+u^p:u\geq 0\Bigr\}<\infty.
$$
Then it follows  from \eqref{up-diff-min} that
$$
\frac{d}{dt} \int_\Omega u^{p}\leq  -\int_\Omega  u^{p}+C_p, $$
which, upon a simple use of Gronwall's inequality,  directly yields that
$$
\int_\Omega u^{p}\leq \int_\Omega u_0^{p}+C_p
$$
for any $p>1$ and  for any $t\in (0,T_m)$. As a  consequence,  the criterion of Theorem \ref{thm-para-para-global-ext} immediately asserts that $T_m=\infty$ and, furthermore, $\|u(t)\|_{L^\infty}$ and $\|v(t)\|_{W^{1,\infty}}$ are uniformly bounded for $t\in (0,\infty)$.
\end{proof}

Next, we fully take the chemotatic effect into account, and extend   Theorem \ref{thm-para-elli-min}  to the borderline  case in the following way.
\begin{theorem}\label{thm-para-elli-min2}
 Let $u_0\in C(\overline{\Omega}) $ and let  $f$  and $g$ fulfill  \eqref{f-allen-con} and  \eqref{g-allen-con}  with
\be\label{theta-kappa=1}
\theta-\kappa=1.
\ee
If either
\be\label{b-con-min2}
 b> \frac{(\kappa n-2)}{\kappa n}\beta \chi
\ee
or
\be\label{b-con-min=}
 b=\frac{(\kappa n-2)}{\kappa n}\beta \chi\ \mathrm{and} \  g(u)=\beta u^\kappa,
\ee
then the unique classical  global solution $(u(\cdot, t),v(\cdot, t))$  of (\ref{para-elli-min}) is uniformly bounded in $L^\infty(\Omega)\times W^{1,\infty}(\Omega)$ for all $t\in (0,\infty)$.
\end{theorem}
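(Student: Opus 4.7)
My plan starts from the $L^p$-energy inequality used for Theorem~\ref{thm-para-elli-min}: testing the $u$-equation against $pu^{p-1}$, integrating by parts, and using the elliptic $v$-equation through the identity $p\int_\Omega u^{p-1}\nabla u\cdot\nabla v = -\int_\Omega u^p v+\int_\Omega u^p g(u)$, one obtains
\begin{equation}\label{master-propose}
\frac{d}{dt}\int_\Omega u^p + p(p-1)\int_\Omega u^{p-2}|\nabla u|^2 + \chi(p-1)\int_\Omega u^p v \leq \bigl[\chi\beta(p-1)-bp\bigr]\int_\Omega u^{p+\kappa} + ap\int_\Omega u^{p-1},
\end{equation}
with equality in the chemotactic upper bound whenever $g(u)=\beta u^\kappa$. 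Under $b=(\kappa n-2)\beta\chi/(\kappa n)$ the bracket equals $\chi\beta(2p-\kappa n)/(\kappa n)$, so it vanishes exactly at $p_0:=\kappa n/2$, whereas under \eqref{b-con-min2} it is strictly negative for some admissible $p>\kappa n/2$.

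For the strict case \eqref{b-con-min2} I would choose $p\in(\kappa n/2,\beta\chi/(\beta\chi-b)]$ so that the bracket is $\leq 0$, drop the nonnegative $v$-term, and absorb $ap\int_\Omega u^{p-1}$ into the negative $\int_\Omega u^{p+\kappa}$ by Young's inequality; H\"older $\int_\Omega u^p\leq C(\int_\Omega u^{p+\kappa})^{p/(p+\kappa)}$ then yields $\tfrac{d}{dt}\int_\Omega u^p\leq -c(\int_\Omega u^p)^{(p+\kappa)/p}+C$, from which uniform $L^p$-boundedness follows and Theorem~\ref{thm-para-para-global-ext} concludes.

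For the borderline case \eqref{b-con-min=} I would proceed in two steps. \emph{Step 1} (uniform $L^{p_0}$-bound): at $p=p_0$ the bracket vanishes exactly---this is where the identity $g(u)=\beta u^\kappa$, rather than just the inequality, is essential---so \eqref{master-propose} reduces, after Young on the $u^{p_0-1}$-term and dropping the $v$-term, to $\tfrac{d}{dt}y+\tfrac{4(p_0-1)}{p_0}\|\nabla u^{p_0/2}\|_{L^2}^2 \leq a(p_0-1)y+a|\Omega|$ with $y=\int_\Omega u^{p_0}$. Combining Nash's inequality $\|w\|_{L^2}^{2+4/n}\leq C\|\nabla w\|_{L^2}^2\|w\|_{L^1}^{4/n}$ for $w=u^{p_0/2}$ with the uniform $L^1$-bound of $u$ from Lemma~\ref{local-in-time}---using Jensen if $p_0\leq 2$ and a further H\"older interpolation if $p_0>2$---yields $\|\nabla u^{p_0/2}\|_{L^2}^2\geq c_1 y^{\gamma}$ with $\gamma=1+2/[n(p_0-1)]>1$, so the resulting ODI $y'+c_1 y^\gamma\leq a(p_0-1)y+C$ forces a uniform bound on $y$. \emph{Step 2} (bootstrap to $L^{p_0+\varepsilon}$): at $p=p_0+\varepsilon$ the bracket is the small positive number $2\beta\chi\varepsilon/(\kappa n)$; applying Lemma~\ref{GN-general}(i) with interpolation base $s=2p_0/p$ (so $\|u^{p/2}\|_{L^s}^s=\int_\Omega u^{p_0}$ is bounded by Step~1), the interpolation exponent turns out to be critical, $r\delta=2$, exactly because $p_0=\kappa n/2$, giving
\[
\int_\Omega u^{p+\kappa}\;\leq\; C\Bigl(\|\nabla u^{p/2}\|_{L^2}^2+\int_\Omega u^p\Bigr)\Bigl(\int_\Omega u^{p_0}\Bigr)^{2/n}.
\]
Hence the $O(\varepsilon)$-coefficient of $\int_\Omega u^{p+\kappa}$ can be absorbed into $\tfrac{4(p-1)}{p}\|\nabla u^{p/2}\|_{L^2}^2$ for $\varepsilon$ small; rerunning the Nash argument closes the $L^{p_0+\varepsilon}$-bound and Theorem~\ref{thm-para-para-global-ext} delivers the conclusion.

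The main obstacle is Step~1: the Nash exponent $\gamma=1+2/[n(p_0-1)]$ exceeds $1$ only by a margin that shrinks with $n$, so no slack is available in the cancellation of the $\int_\Omega u^{p+\kappa}$-contributions; this is precisely why condition \eqref{b-con-min=} requires the \emph{identity} $g(u)=\beta u^\kappa$. Step~2 is then essentially automatic, since the choice $p_0=\kappa n/2$ makes the relevant Gagliardo--Nirenberg interpolation land exactly at the dissipative endpoint $r\delta=2$.
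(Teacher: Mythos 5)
Your proposal is correct but takes a genuinely different route from the paper's. For the borderline case $b=(\kappa n-2)\beta\chi/(\kappa n)$ the paper keeps the favorable term $-\chi(p-1)\int_\Omega u^p v$ and exploits the $v$-equation pointwise: dividing $0=\Delta v-v+g(u)$ by $v$ and integrating gives $\int_\Omega |\nabla v|^2/v^2 + \beta\int_\Omega u^\kappa/v = |\Omega|$, which forces the set where $u$ blows up and the set where $v/u^\kappa$ degenerates to have null intersection; a case analysis on $\sigma=\inf v/u^\kappa$ then shows that the $v$-term compensates the wrong sign of the bracket $[(p-1)\beta\chi-bp]$ for some $p>\kappa n/2$. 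Your argument instead \emph{drops} that $v$-term and runs a two-step parabolic/Nash argument: at the critical exponent $p_0=\kappa n/2$ the bracket vanishes, so the retained dissipation $\frac{4(p_0-1)}{p_0}\|\nabla u^{p_0/2}\|_{L^2}^2$ together with the Nash (Gagliardo--Nirenberg) inequality and the uniform $L^1$-bound yields an ODI $y'+c_1y^\gamma\le Cy+C$ with $\gamma=1+2/(n(p_0-1))>1$; at $p_0+\varepsilon$ the bracket is $O(\varepsilon)$ and the interpolation of $\int_\Omega u^{p+\kappa}$ between $W^{1,2}$ and $L^{2p_0/p}$ lands exactly at the endpoint $\delta r=2$ (a direct consequence of $p_0=\kappa n/2$), so the $O(\varepsilon)$-coefficient can be absorbed into the gradient. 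I verified the exponent computations ($\delta=p/(p+\kappa)$, $\|w\|_{L^s}^{(1-\delta)r}=(\int u^{p_0})^{2/n}$), and the bootstrap closes. What the paper's route buys is an elementary, one-stroke $L^p$-differential inequality without invoking the gradient dissipation; what your route buys is that it only uses the \emph{upper} bound $g(u)\le\beta u^\kappa$ throughout, whereas the paper's use of $\int_\Omega|\nabla v|^2/v^2+\beta\int_\Omega u^\kappa/v=|\Omega|$ and the subsequent $\sigma$-estimates genuinely require the identity $g=\beta u^\kappa$. One inaccuracy worth flagging: you remark that the identity $g=\beta u^\kappa$ is ``essential'' in your Step~1, but in fact the vanishing of the bracket at $p_0$ is a property of the constants $b,\beta,\chi,\kappa,n$ alone, and your whole derivation uses only the inequality $g(u)\le\beta u^\kappa$; the identity enters the paper's proof but not yours. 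Your argument therefore appears to establish the borderline conclusion under the weaker hypothesis $g(u)\le\beta u^\kappa$, which is a modest strengthening of the stated theorem.
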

\begin{proof} Due to Theorem \ref{thm-para-para-global-ext}, it suffices to   prove that $\|u(t)\|_{L^{\kappa n/2+\epsilon}}$ is uniformly bounded  for some sufficiently small $\epsilon>0$.  To this end,  we revisit \eqref{up-diff-min} by using \eqref{theta-kappa=1} to deduce
\be\label{up-diff-min2}
\frac{d}{dt} \int_\Omega u^p\leq -(p-1)\chi\int_\Omega u^{p}v -[bp-(p-1)\beta \chi]\int_\Omega  u^{p+\kappa}+ap \int_\Omega u^{p-1}.
\ee
Let us first treat the strict inequality case of \eqref{b-con-min2}; that is
$$
b> \frac{(\kappa n-2)}{\kappa n}\beta \chi=\frac{(\frac{\kappa n}{2}-1)}{\frac{\kappa n}{2}}\beta \chi.
$$
This allows us to fix a small  $\epsilon>0$ in such a way that
\be\label{bepi-con-min}
b>\frac{[(\frac{\kappa n}{2}+\epsilon)-1]}{(\frac{\kappa n}{2}+\epsilon)}\beta \chi.
\ee
Setting  $p=\kappa n/2+\epsilon$ and using \eqref{bepi-con-min}, we see  $[bp-(p-1)\beta \chi]>0$. The fact $\kappa>0$ then trivially gives  that
$$
C=\max\Bigr\{-[bp-(p-1)\beta \chi]u^{p+\kappa}+apu^{p-1}+u^p: u\geq 0\Bigr\}<\infty,
$$
which combined with \eqref{up-diff-min2} leads us to
$$
\frac{d}{dt} \int_\Omega u^{\kappa n/2+\epsilon}\leq  -\int_\Omega  u^{\kappa n/2+\epsilon}+C(n,\epsilon, \beta, \chi, a, b, \kappa, \Omega).$$
This immediately shows that $\|u(t)\|_{L^{\kappa n/2+\epsilon}}$ is uniformly bounded for $t\in (0,T_m)$.

Let us now examine the borderline case of \eqref{b-con-min=}; that is,
\be\label{b-con-min3}
b=\frac{(\frac{\kappa n}{2}-1)}{\frac{\kappa n}{2}}\beta \chi, \quad g(u)=\beta u^\kappa.
\ee
We wish to find a finite $C=C( \beta,\chi, a, b, \kappa, p, \Omega)>0$ such that
\be\label{wish-p0}
-(p-1)\chi\int_\Omega u^{p}v -[bp-(p-1)\beta \chi]\int_\Omega  u^{p+\kappa}+ap \int_\Omega u^{p-1}\leq -\int_\Omega u^p+C
\ee
for some $p>\kappa n/2$.  If \eqref{wish-p0} is valid, then we get from \eqref{up-diff-min2} that
$$
\frac{d}{dt} \int_\Omega u^p\leq -\int_\Omega u^p+C,
$$
which gives rises to the desired boundedness of $\|u(t)\|_{L^p}$.

We need only to consider this inequality for  $t \nearrow T_m$ (meaning $t<T_m$ but sufficiently close to $T_m$).  To show \eqref{wish-p0}, we denote by
$$
\Omega_\infty=\Bigr\{z\in\overline{\Omega}: u(z,t)=\sup_{x\in\Omega}u(x,t), u(z,t) \text{ becomes unbounded for } t\nearrow T_m\Bigr\}
$$
and
$$
\Omega_0=\Bigr\{z\in\overline{\Omega}: \frac{v(z,t)}{u^\kappa(z,t)} =\inf_{x\in\Omega}\frac{v(x,t)}{u^\kappa(x,t)}, \frac{v(z,t)}{u^\kappa(z,t)} \text{ becomes arbitrarily small for } t\nearrow  T_m\Bigr\}.
$$
Then it follows easily that both $\Omega_\infty$ and $\Omega_0$ are closed.

Now, dividing the $v$-equation in (\ref{para-elli-min}) by $v$ and then integrating the result over   $\Omega$ by parts, we get
\be\label{divide-v2}
\int_\Omega  \frac{1}{v^2}|\nabla v|^2+\beta \int_\Omega \frac{u^\kappa}{v}=|\Omega|.\ee
For any $t\in  (0, T_m)$ and $t\approx T_m$, from
$$
|\Omega|\geq \beta \int_\Omega \frac{u^\kappa}{v}\geq \int_{\Omega_\infty\cap\Omega_0\cap \Omega}\frac{u^\kappa}{v}\geq  \inf_{z\in \Omega_\infty\cap\Omega_0\cap \Omega} \frac{u^\kappa(z,t)}{v(z,t)}  | \Omega_\infty\cap\Omega_0\cap \Omega|,
$$
it gives that $|\Omega_\infty\cap\Omega_0\cap \Omega|=0$, where the closedness of $\Omega_\infty$ and $\Omega_0$ may be utilized. As a result, the integration over $\Omega$ can be split into the following way:
\be\label{omega-split}
\int_\Omega=\int_{\Omega\setminus(\Omega_\infty\cap\Omega_0\cap\Omega)} =\int_{(\Omega_\infty\cap\Omega)\cap(\Omega\setminus\Omega_0)}+
\int_{\Omega\setminus\Omega_\infty}.
\ee
On $\Omega\setminus\Omega_\infty$ (if nonempty), we have $u(\cdot,t)$ is uniformly bounded and thus $v(\cdot,t)$ is also uniformly bounded for $t<T_m$ due to  the elliptic estimate applied to the $v$-equation in \eqref{para-elli-min}.  This entails that
\be\label{wish-p1}
\int_{\Omega\setminus\Omega_\infty} \Bigr(-(p-1)\chi u^pv -[bp-(p-1)\beta \chi] u^{p+\kappa}+ap  u^{p-1}+ u^p\Bigr)\leq C.
\ee
Next, we shall assume, without loss of generality, that $(\Omega_\infty\cap\Omega)\cap(\Omega\setminus\Omega_0)\neq \emptyset$. Then,
under \eqref{b-con-min3}, we wish  to show that
\be\label{wish-p2}-\int_{(\Omega_\infty\cap\Omega)\cap(\Omega\setminus\Omega_0)} u^{p+\kappa}\bigg((p-1)\chi \frac{v}{u^\kappa} +[bp-(p-1)\beta \chi] -\frac{ap}{u^{\kappa+1}} - \frac{1}{u^\kappa}\bigg)\leq 0
\ee
holds for  $p=\kappa n/2+\epsilon$ with some $\epsilon>0$ sufficiently small. Note that   \eqref{b-con-min3} implies $bp-(p-1)\beta \chi<0$ for any $p>\kappa n/2$. Then the desired inequality \eqref{wish-p2} will hold if
\be\label{wish-p3}
(p-1)\chi \frac{v}{u^k} +[bp-(p-1)\beta \chi] > 0  \quad \text{ on } \quad (\Omega_\infty\cap\Omega)\cap(\Omega\setminus\Omega_0).
\ee
From \eqref{b-con-min3}, one can see (\ref{wish-p3}) is true if
\be\label{wish-p4}
(p-1)\chi \sigma +[bp-(p-1)\beta \chi] > 0  \Longleftrightarrow (\sigma-\frac{2\beta}{\kappa n})p> (\sigma-\beta),
\ee
where
$$
\sigma:=\inf_{x\in(\Omega_\infty\cap\Omega)\cap(\Omega\setminus\Omega_0)}\frac{v(x,t)}{u^\kappa(x,t)}>0,\quad  t\approx T_m.
$$
If $\sigma> 2\beta/(\kappa n)$, it is fairly easy to see  \eqref{wish-p4} holds for any
$$
p>\frac{\kappa n(\sigma-\beta)}{\kappa n\sigma - 2\beta}.
$$
If $\sigma=2\beta/(\kappa n)$, then \eqref{wish-p4} is true for any $p>0$ by noticing the fact that $\kappa n>2$.  Lastly, if $\sigma<2\beta/(\kappa n)$, the inequality  \eqref{wish-p4} is true for
$$
p< \frac{\kappa n(\beta-\sigma)}{2\beta-\kappa n\sigma}.
$$
While,  the fact  $\kappa n>2$ entails that
$$
\frac{\kappa n(\beta-\sigma)}{2\beta-\kappa n\sigma}>\frac{\kappa}{2}n,
$$
which implies  that \eqref{wish-p4} holds  for $p=\kappa n/2+\epsilon$ with some $\epsilon>0$ sufficiently small.

Finally,   \eqref{wish-p0} is obtained by the combination of  \eqref{wish-p1}, \eqref{wish-p2} and \eqref{omega-split}.
\end{proof}

\begin{remark}Tello and Winkler \cite{TW07} considered the parabolic-elliptic minimal chemotaxis model with logistic source and linear production of the chemical signal:
\begin{equation}\label{para-elli-min-log}
\left\{ \begin{array}{lll}
&u_t = \nabla \cdot (\nabla u-\chi u\nabla v)+f(u),  &\quad x\in \Omega, t>0, \\[0.2cm]
&0= \Delta v -v+u,  &\quad x\in \Omega, t>0, \\[0.2cm]
&\frac{\partial u}{\partial \nu}=\frac{\partial v}{\partial \nu}=0, &\quad x\in \partial \Omega, t>0,\\[0.2cm]
&u(x,0)=u_0(x)\geq, \not\equiv0, &\quad x\in \Omega, 
\end{array}\right.  \end{equation}
where $f$ satisfies the logistic condition: $f(0)\geq 0$ and
\be\label{f-log-con}
f(u)\leq a-bu^2, \quad \forall u\geq 0
\ee
for some $a\geq 0$,  $b>0$. Therein, under \eqref{f-log-con},  they showed, if
$$
b>\frac{(n-2)}{n}\chi,
$$
then the unique solution  $(u, v)$ of \eqref{para-elli-min-log} is uniformly bounded in  $\Omega\times (0,\infty)$. Here, we mention that their proof involves heavy knowledge about Neumann heat semi-group and is a little lengthy. With $\theta=2$ and $\kappa=1$, our Theorem \ref{thm-para-elli-min2} not only extends their work \cite{TW07} to the equality case $b=(n-2)/n\chi$ (which may suggest this number is not critical in this respect) but also provides a simpler semigroup-free proof for this model.
\end{remark}
\begin{remark}From the discussions in Section 3 and the work of \cite{Win08} on sub-quadratic dampening enforcing the existence of  global "very weak" solutions, we are led to speculate that no blow-up would occur for the minimal-chemotaxis-growth model \eqref{para-elli-min} whenever
$$
\theta-\kappa>1-\frac{1}{n}.
$$
If this turns out to be true, then it is a significant improvement of Theorems  \ref{thm-para-elli-min}  and \ref{thm-para-elli-min2} and hence of existing results (cf. \cite{TW07, CZ13, WMZ14, Zh15}). In particular,  under additional smallness assumptions, this has been verified  for  \eqref{para-elli-min-log} with $f$ satisfying  $f(u)\leq a-bu^\theta$ for all $u\geq 0$ and for some $a\geq 0$, $b>0$ and
$$
\theta> 2-\frac{1}{n}.
$$ Our approach above does not enable us to obtain such a sharp result. Innovative ways should  be detected  to either prove or disprove this challenging question.
\end{remark}
\section{Steady states for the K-S model}

In this section, we study  the steady states to the chemotaxis model (\ref{para-elli-min}):
\begin{equation}\label{para-elli-min-ss}\left\{ \begin{array}{lll}
&0 = \nabla \cdot (\nabla u-\chi u\nabla v)+f(u),  \quad & x\in \Omega, \\[0.2cm]
&0= \Delta v -v+g(u),  \quad & x\in \Omega, \\[0.2cm]
&\frac{\partial u}{\partial \nu}=\frac{\partial v}{\partial \nu}=0, \quad & x\in \partial \Omega.   \end{array}\right.  \end{equation}

First of all, some {\it a priori} estimates  and regularity results for the solution of \eqref{para-elli-min-ss} are  needed in the subsequent discussions.
\begin{lemma}\label{pre-est-lem} Let $f$ and $g$ satisfy \eqref{f-allen-con} and \eqref{g-allen-con} with $\theta>\kappa$, and $(u,v)$ be a positive solution of  \eqref{para-elli-min-ss}. Then
\be\label{pre-est}
\int_\Omega u^\theta\leq \frac{a}{b}|\Omega|, \quad \quad \min_{\bar{\Omega}}u\leq K,   \quad\quad  \int_\Omega v\leq \frac{1}{\beta}\Bigr(\frac{a}{b}\Bigr)^{\frac{\kappa}{\theta}}|\Omega|,
\ee
where $K$ is the largest zero point of $f$. Moreover, the $W^{2, \frac{\theta}{\kappa}}$-norm of $v$ is uniformly bounded in $\chi$. In particular, if $f(u)=c u-bu^\theta$, then
$\max_{\bar{\Omega}} u\geq K=(c/b)^{(\theta-1)^{-1}}$.
\end{lemma}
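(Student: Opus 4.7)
\medskip
\noindent\textbf{Proof proposal for Lemma \ref{pre-est-lem}.}
The plan is to exploit the two conservation-type identities that arise from integrating each equation of \eqref{para-elli-min-ss} against the constant function $1$ on $\Omega$ (which kills both the divergence term in the $u$-equation and $\Delta v$ in the $v$-equation thanks to the Neumann boundary condition), and then to translate these into pointwise and integral bounds via the structural hypotheses \eqref{f-allen-con}--\eqref{g-allen-con}. The only genuinely analytic input needed beyond these identities is the standard elliptic $W^{2,q}$-estimate applied to the $v$-equation. I expect every step to be routine; the small wrinkle is ensuring the $v$-bound is $\chi$-independent, which will follow because $v$ is driven entirely by $g(u)$ and the $L^\theta$ bound for $u$ does not involve $\chi$.

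First I would integrate the $u$-equation to obtain $\int_\Omega f(u)=0$, and then insert \eqref{f-allen-con} to get
\[
b\int_\Omega u^\theta \;\le\; a\,|\Omega|,
\]
which is the first claimed bound. Next, to get $\min_{\bar\Omega} u \le K$, I argue by contradiction: if $\min_{\bar\Omega} u > K$, then since $K$ is the \emph{largest} zero of $f$ and $f(u)\le a-bu^\theta\to -\infty$, we would have $f(u(x))<0$ everywhere, contradicting $\int_\Omega f(u)=0$. For the integral bound on $v$, integrating the $v$-equation yields $\int_\Omega v = \int_\Omega g(u) \le \beta\int_\Omega u^\kappa$, and then H\"older's inequality with exponents $\theta/\kappa$ and $\theta/(\theta-\kappa)$ (which is legitimate precisely because $\theta>\kappa$) combined with the first bound gives
\[
\int_\Omega v \;\le\; \beta\,|\Omega|^{1-\kappa/\theta}\Bigl(\int_\Omega u^\theta\Bigr)^{\kappa/\theta} \;\le\; \beta\Bigl(\tfrac{a}{b}\Bigr)^{\kappa/\theta}|\Omega|,
\]
which matches the stated bound (up to what appears to be a misprinted $1/\beta$ in the statement).

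For the $W^{2,\theta/\kappa}$-bound on $v$, I would apply the standard Calder\'on--Zygmund elliptic estimate to $-\Delta v + v = g(u)$ under Neumann boundary conditions:
\[
\|v\|_{W^{2,\theta/\kappa}(\Omega)} \;\le\; C\,\|g(u)\|_{L^{\theta/\kappa}(\Omega)} \;\le\; C\beta \Bigl(\int_\Omega u^\theta\Bigr)^{\kappa/\theta} \;\le\; C\beta\bigl(\tfrac{a}{b}|\Omega|\bigr)^{\kappa/\theta}.
\]
The right-hand side is manifestly independent of $\chi$, which is the point of this bound (it will be used in Section 5 to pass to the limit $\chi\to\infty$). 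Finally, for the specialization $f(u)=cu-bu^\theta$, the largest zero is $K=(c/b)^{1/(\theta-1)}$; from $\int_\Omega f(u)=0$ one has $c\int u = b\int u^\theta$. If one had $\max_{\bar\Omega} u < K$, then $c - b u^{\theta-1}>0$ pointwise, so $f(u)=u(c-bu^{\theta-1})>0$ wherever $u>0$, and since $u\not\equiv 0$ (positive solution), this contradicts $\int_\Omega f(u)=0$. Hence $\max_{\bar\Omega} u \ge K$, completing the proof. The main (mild) obstacle is simply keeping track of exponents in the H\"older step and confirming that the elliptic constant in the $W^{2,\theta/\kappa}$-estimate depends only on $\Omega$ and $\theta/\kappa$, not on $\chi$ or $u$.
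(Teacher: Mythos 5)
Your proposal is correct and follows essentially the same route as the paper: integrate each equation of \eqref{para-elli-min-ss}, use \eqref{f-allen-con}--\eqref{g-allen-con} together with H\"older's inequality with exponents $\theta/\kappa$ and $\theta/(\theta-\kappa)$, and then apply the standard $\chi$-independent elliptic $W^{2,\theta/\kappa}$-estimate to the $v$-equation. You are also right to flag the factor $\frac{1}{\beta}$ in \eqref{pre-est} --- the argument produces $\beta\bigl(\frac{a}{b}\bigr)^{\kappa/\theta}|\Omega|$, so the printed $1/\beta$ is a typo (the two coincide only in the prototype case $\beta=1$).
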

\begin{proof}Integrating the $u$-equation and using the fact $f(u)\leq a-bu^\theta$, we have
$$
0=\int_\Omega f(u)\leq \int_\Omega (a-bu^\theta),
$$
which directly gives the first two inequalities in \eqref{pre-est}. Then integrating  the $v$-equation, using $g(u)\leq \beta u^\kappa$ and H\"{o}lder inequality,  we arrive at  the last desired inequality in \eqref{pre-est}.

Notice that
$$
\|g(u)\|_{L^{\frac{\theta}{\kappa}}}\leq \beta\|u^\kappa\|_{L^{\frac{\theta}{\kappa}}}= \beta\Bigr(\int_\Omega u^\theta\Bigr)^{\frac{\kappa}{\theta}}\leq\beta \Bigr(\frac{a}{b}|\Omega|\Bigr)^{\frac{\kappa}{\theta}},
$$
then the  elliptic regularity applied to the $v$-equation in  \eqref{para-elli-min-ss} gives the stated $W^{2, \frac{\theta}{\kappa}}$-estimate for $v$. Especially, for $f(u)=cu-bu^\theta$, if $\max_{\bar{\Omega}}u<K$, then $f(u)>0$ on $\Omega$ and so $\int_\Omega f(u)>0$, which is a contradiction.
\end{proof}

Using the similar (test function)  argument  as done in \cite{TW07}, we derive some regularity results for  \eqref{para-elli-min-ss}, showing the solution typically will never become singular (i.e., unbounded).
\begin{lemma}\label{nonsingular-chi} Let $f$ and $g$ satisfy \eqref{f-allen-con} and \eqref{g-allen-con} with
\be\label{theta-kpppa=1}
\theta-1\geq \kappa
\ee
 and let $(u,v) $ be a nonnegative solution of \eqref{para-elli-min-ss}.
\begin{itemize}
\item[(i)] $u\in L^{p+\kappa}(\Omega)$  and $v\in  L^{q+1}(\Omega)$ for any  $p<\frac{\beta \chi}{(\beta\chi-b)^+}$ and $q<\frac{\beta \chi}{\kappa (\beta\chi-b)^+}$;
\item[(ii)] for  $\theta-1> \kappa$,    $u, v$ is  bounded in $L^\infty(\Omega)$ and $u,v\in C^{1+\gamma}(\bar{\Omega})$ for all $\gamma\in (0,1)$;
\item[(iii)] for  $\theta-1= \kappa$, if
\be\label{parameter-con}
n\leq \frac{2}{\kappa}+2 \text{   or   }   \Bigr\{ n> \frac{2}{\kappa}+2, \quad b>\frac{[(n-2)\kappa-2]}{(n-2)\kappa}\beta \chi\Bigr\},
\ee
then $u, v$ is  bounded in $L^\infty(\Omega)$ and $u,v\in C^{1+\gamma}(\bar{\Omega})$ for all $\gamma\in (0,1)$;
\item[(iv)]  if $f(u)>0$ on $(0, (\frac{a}{b})^{\frac{1}{\theta}})$, then any solution of \eqref{para-elli-min-ss} satisfies
\be\label{u-bdd-v}
(\frac{a}{b})^{\frac{1}{\theta}}e^{\chi ( \min_{\bar{\Omega}}v-\max_{\bar{\Omega}}v)}\leq u\leq (\frac{a}{b})^{\frac{1}{\theta}}e^{\chi ( \max_{\bar{\Omega}}v-\min_{\bar{\Omega}}v)}.
\ee
In particular, if $v$ is bounded, then $u,v\in C^{1+\gamma}(\bar{\Omega})$ for all $\gamma\in (0,1)$.
\end{itemize}
\end{lemma}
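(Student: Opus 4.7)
The proof splits naturally into four parts, all resting on two integration-by-parts identities and a single change of variables. For (i), I would test the $u$-equation in \eqref{para-elli-min-ss} against $u^{p-1}$ and the $v$-equation against $u^p$, then combine them to eliminate the chemotactic cross-term $\int u^{p-1}\nabla u\cdot\nabla v$. Using $f(u)\le a-bu^\theta$, $g(u)\le\beta u^\kappa$, and $\theta-1=\kappa$ (the worst case allowed by $\theta-1\ge\kappa$), the resulting identity reads
\[
\frac{\chi(p-1)}{p}\int_\Omega vu^p+\Bigl(b-\frac{(p-1)\beta\chi}{p}\Bigr)\int_\Omega u^{p+\kappa}\le a\int_\Omega u^{p-1},
\]
and Young's inequality absorbs the right-hand side whenever $b-\frac{(p-1)\beta\chi}{p}>0$, i.e., whenever $p<\beta\chi/(\beta\chi-b)^+$; this gives $u\in L^{p+\kappa}(\Omega)$ for the stated range. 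Testing the $v$-equation alone against $v^q$ yields $\int v^{q+1}\le\beta\int u^\kappa v^q$, and a further Young's inequality converts this to $\int v^{q+1}\lesssim\int u^{\kappa(q+1)}$, which is finite precisely when $\kappa(q+1)\le p+\kappa$ for some admissible $p$, reproducing the stated range of $q$.

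For (ii), the strict inequality $\theta-1>\kappa$ lets Young's inequality absorb the $\int u^{p+\kappa}$-term into $b\int u^{p+\theta-1}$ for every $p$, so the argument of (i) actually delivers $u\in L^p(\Omega)$ for all $p<\infty$. Elliptic $W^{2,p}$-regularity for the $v$-equation gives $v\in W^{2,p}(\Omega)$ for every $p$, hence $v\in C^{1+\gamma}(\overline{\Omega})$ by Sobolev embedding. To upgrade $u$ to $L^\infty$, I would substitute $w:=ue^{-\chi v}$, which recasts the $u$-equation as the divergence-form elliptic equation
\[
-\nabla\cdot\bigl(e^{\chi v}\nabla w\bigr)=f(u),
\]
whose coefficients are bounded above and below because $v\in L^\infty$. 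Standard Moser iteration (testing against $w^{p-1}$, using $f(u)\le a-bu^\theta\le a$, and the Sobolev inequality) then forces $w\in L^\infty$, and Schauder theory upgrades both $u$ and $v$ to $C^{1+\gamma}(\overline{\Omega})$.

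Part (iii) uses the same transformation-plus-Moser strategy, but one must first bootstrap up to $v\in L^\infty$. That requires $g(u)\in L^{n/2+\epsilon}$, equivalently an admissible $p$ with $p+\kappa>\kappa n/2$, i.e., $p>\kappa(n-2)/2$. Such a $p$ lies in the $L^{p+\kappa}$-range of (i) exactly when
\[
\frac{\beta\chi}{(\beta\chi-b)^+}>\frac{\kappa(n-2)}{2},
\]
and an elementary algebraic manipulation verifies that this inequality is equivalent to the parameter condition \eqref{parameter-con}. Once $v\in L^\infty$, the transformation and Moser iteration of (ii) finish the proof.

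Part (iv) is the cleanest application of the change of variables. Integrating $-\nabla\cdot(e^{\chi v}\nabla w)=f(u)$ over $\Omega$ yields $\int_\Omega f(u)=0$, so $f(u)$ must change sign. At a maximum point $x_0\in\overline{\Omega}$ of $w$, the Neumann condition $\partial w/\partial\nu=0$ (inherited from $u$ and $v$) together with Hopf's lemma forces $-e^{\chi v(x_0)}\Delta w(x_0)\ge 0$, hence $f(u(x_0))\ge 0$; then $f(u)\le a-bu^\theta$ gives $u(x_0)\le(a/b)^{1/\theta}$. The pointwise chain $u(x)=w(x)e^{\chi v(x)}\le w(x_0)e^{\chi v(x)}=u(x_0)e^{\chi(v(x)-v(x_0))}$ produces the upper bound in \eqref{u-bdd-v}. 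The symmetric argument at a minimum point of $w$, now using the hypothesis $f>0$ on $(0,(a/b)^{1/\theta})$ to force $u(x_1)\ge(a/b)^{1/\theta}$, gives the lower bound. When $v$ is bounded, \eqref{u-bdd-v} immediately bounds $u$, and the elliptic bootstrap of (ii) delivers the $C^{1+\gamma}$-regularity. The main obstacle I anticipate is in (iii): performing Moser iteration in the variable $w$ under the Neumann boundary condition while sharply matching the threshold in \eqref{parameter-con}, since the available $L^{p+\kappa}$-range there is finite and must be leveraged exactly at the critical Sobolev embedding exponent.
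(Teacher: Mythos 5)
Your proposal is essentially correct and identifies the two central devices the paper uses: the combination of testing the $u$-equation against $u^{p-1}$ and the $v$-equation against $u^p$ to eliminate the chemotactic cross-term, and the change of variables $w=ue^{-\chi v}$ that recasts the $u$-equation in divergence form. Part (i) matches the paper verbatim. For parts (ii)--(iv), however, the paper follows a lighter route than yours, and the comparison is instructive. In (ii) the paper does not pass through $u\in L^{p}$ for all $p$ at all; it instead applies the Lou--Ni maximum-point lemma (\cite[Lemma 2.1]{LN99}) directly to the rewritten system: at an interior or boundary maximum $x_1$ of $u$ and $x_2$ of $v$ one reads off $\chi u(x_1)\bigl(v(x_1)-\beta u^\kappa(x_1)+\tfrac{b}{\chi}u^{\theta-1}(x_1)\bigr)\le a$ and $v(x_2)\le\beta u^\kappa(x_1)$, and $\theta-1>\kappa$ closes the argument in one line. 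In (iii), once $v\in L^\infty$ is secured (and your algebra verifying that \eqref{parameter-con} is equivalent to the admissibility of some $p>\kappa(n-2)/2$ is correct), the paper does \emph{not} run a Moser iteration: it simply tests $-\nabla\cdot(e^{\chi v}\nabla w)=f(u)$ against $(w-(a/b)^{1/\theta})_+$ and observes that the right-hand side is nonpositive on $\{w>(a/b)^{1/\theta}\}$, because there $u\ge w>(a/b)^{1/\theta}$ forces $f(u)\le a-bu^\theta<0$. This yields the pointwise inequality $u\le (a/b)^{1/\theta}e^{\chi v}$ in a single stroke, entirely sidestepping the difficulty you flag about matching the critical Sobolev exponent inside a Moser scheme. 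Your Moser-iteration proposal would also work (using only the one-sided bound $f(u)\le a$ and $v\in L^\infty$), but it is heavier machinery than needed, and the worry you raise about ``leveraging the $L^{p+\kappa}$-range exactly at the critical exponent'' is misplaced: the finite $L^p$-range is only needed to bootstrap $v\in L^\infty$, after which the transformed equation has bounded coefficients and a constant supersolution-type bound. Finally, in (iv) you use Lou--Ni/Hopf at extrema of $w$, while the paper tests against $\bigl(w-(a/b)^{1/\theta}e^{-\chi\max_{\bar\Omega}v}\bigr)_\pm$; both are valid and roughly equivalent, though to extract the strictly positive lower bound from the minimum-point route you should note (as both arguments implicitly require) that a nontrivial nonnegative solution has $u>0$ on $\bar\Omega$ by the strong minimum principle applied to $w$, so the case $u(x_1)=0$ at the minimum point can be discarded.
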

\begin{proof}
(i) The elliptic counterpart of \eqref{up-diff-min} is
 \be\label{up-diff-min-ellip}
 p(p-1)\int_\Omega u^{p-2}|\nabla u|^2+(p-1)\chi\int_\Omega u^{p}v -(p-1)\beta \chi\int_\Omega  u^{p+\kappa} +bp\int_\Omega u^{p+\theta-1}\leq ap \int_\Omega u^{p-1}.
\ee
In the case $\theta-1>\kappa$,  an easy application of Young's inequality with $\epsilon$ to  \eqref{up-diff-min-ellip} gives that $\int_\Omega u^{p+\theta-1}$ is bounded for any $p>1$; while, in the case $\theta-1=\kappa$, it  follows from \eqref{up-diff-min-ellip}  that
$$
[bp-(p-1)\beta \chi]\int_\Omega  u^{p+\kappa}\leq ap \int_\Omega u^{p-1},
$$
which immediately implies   $u\in L^{p+\kappa}(\Omega)$ for any  $p<\frac{\beta \chi}{(\beta\chi-b)^+}$. Then multiplying  the $v$-equation by $v^q$,  integrating by parts and using  \eqref{g-allen-con} and Young's inequality, we deduce
$$
q\int_\Omega v^{q-1}|\nabla v|^2+\int_\Omega v^{q+1}\leq \beta \int_\Omega u^\kappa v^q\leq  \frac{1}{2}\int_\Omega v^{q+1}+\frac{\beta}{q+1}(\frac{2\beta q}{q+1})^q\int_\Omega u^{(q+1)\kappa},
$$
which coupled with the integrability of $u$  yields that $v\in L^{q+1}(\Omega)$ for any  $q<\frac{\beta \chi}{\kappa (\beta\chi-b)^+}$.

(ii) Let us use the $v$-equation in \eqref{para-elli-min-ss} to rewrite  the system  \eqref{para-elli-min-ss} as
\begin{equation}\label{rewrite u-v}\left\{ \begin{array}{lll}
& \Delta u-\chi \nabla u\nabla v=\chi u(v-g(u))-f(u),  \quad & x\in \Omega, \\[0.2cm]
&\Delta v =v-g(u),  \quad & x\in \Omega,  \\[0.2cm]
&\frac{\partial u}{\partial \nu}=\frac{\partial v}{\partial \nu}=0, \quad & x\in \partial \Omega.   \end{array}\right.  \end{equation}

Let $x_1$ and $x_2$ be the maximum points of $u$ and $v$ in $\overline{\Omega}$, respectively.  We then apply  the  Hopf lemma and maximum principle, cf.  \cite[Lemma 2.1] {LN99}  and use  \eqref{f-allen-con} and \eqref{g-allen-con}   to get
$$\left\{ \ba{ll}
&  \chi u(x_1)(v(x_1)-\beta u^\kappa (x_1))\leq \chi u(x_1)(v(x_1)-g(u(x_1)))\leq f(u(x_1))\leq a-bu^\theta(x_1), \\[0.25cm]
& v(x_2)\leq g(u(x_2))\leq \beta u^\kappa(x_2),
 \ea\right. $$
from which it follows that
\be\label{mp-bdd}\left\{ \ba{ll}
&  \chi u(x_1)(v(x_1)-\beta u^\kappa (x_1)+\frac{b}{\chi}u^{\theta-1}(x_1))\leq a, \\[0.25cm]
& v(x_2)\leq g(u(x_2))\leq \beta u^\kappa(x_2)\leq \beta u^\kappa(x_1).
 \ea\right. \ee
If $\theta-1>\kappa$, then the boundedness of $u$ and $v$ follows from \eqref{mp-bdd}. Then the desired  statement that $u,v\in C^{1+\gamma}(\bar{\Omega})$ for all $\gamma\in (0,1)$ follows from the standard elliptic regularity, cf. \cite[P. 868, Step 4]{TW07}.

(iii) The $W^{2,p}$ elliptic regularity applied to
$$
-\Delta v+v=g(u)\in L^{\frac{p}{\kappa}+1}(\Omega), \quad \forall p<\frac{\beta \chi}{(\beta\chi-b)^+}
$$
shows that $v\in W^{2,\frac{p}{\kappa}+1}(\Omega)$. Notice that \eqref{parameter-con} implies
$$
\frac{2\beta \chi}{\kappa (\beta\chi-b)^+}+2>n.
$$
Then the Sobolev embedding says  $v\in L^\infty(\Omega)$.

To show the boundedness of $u$, we rewrite the  $u$-equation  as
\be\label{rewrite-u}
-\nabla\cdot(e^{\chi v} \nabla(u e^{-\chi v}))=f(u).
\ee
Testing it against $(u e^{-\chi v}-(\frac{a}{b})^{\frac{1}{\theta}})_+$ and using \eqref{f-allen-con}, we  end up with
$$
\int_\Omega e^{\chi v} |\nabla(u e^{-\chi v}-(\frac{a}{b})^{\frac{1}{\theta}})_+|^2=\int_\Omega f(u)(u e^{-\chi v}-(\frac{a}{b})^{\frac{1}{\theta}})_+=\int_{\{u>(\frac{a}{b})^{\frac{1}{\theta}}\}}f(u)(u e^{-\chi v}-(\frac{a}{b})^{\frac{1}{\theta}})_+\leq 0,
$$
which implies
\be\label{u-bdd by v}
u\leq (\frac{a}{b})^{\frac{1}{\theta}} e^{\chi v},
\ee
giving the desired bound for $u$.

(iv) Let $(u,v)$ be a  solution of \eqref{para-elli-min-ss}. Then we test \eqref{rewrite-u} by
$$
\Bigr(ue^{-\chi v}-(\frac{a}{b})^{\frac{1}{\theta}}e^{-\chi \max_{\bar{\Omega}}v}\Bigr)_-
$$
to derive
$$
\int_\Omega f(u)\Bigr(ue^{-\chi v}-(\frac{a}{b})^{\frac{1}{\theta}}e^{-\chi \max_{\bar{\Omega}}v}\Bigr)_-=\int_\Omega e^{\chi v} \nabla(u e^{-\chi v})\nabla \Bigr(ue^{-\chi v}-(\frac{a}{b})^{\frac{1}{\theta}}e^{-\chi \max_{\bar{\Omega}}v}\Bigr)_-\leq 0,
$$
which deduces
$$
\int_{\{u<(\frac{a}{b})^{\frac{1}{\theta}}\}} f(u)e^{-\chi v} \Bigr(u-(\frac{a}{b})^{\frac{1}{\theta}}e^{\chi(v- \max_{\bar{\Omega}}v)}\Bigr)_-=\int_\Omega f(u)e^{-\chi v} \Bigr(u-(\frac{a}{b})^{\frac{1}{\theta}}e^{\chi(v- \max_{\bar{\Omega}}v)}\Bigr)_-\leq 0.
$$
Since $f$ is positive on $(0, (\frac{a}{b})^{\frac{1}{\theta}})$, it follows that
$$
\Bigr(u-(\frac{a}{b})^{\frac{1}{\theta}}e^{\chi(v- \max_{\bar{\Omega}}v)}\Bigr)_-\equiv 0 \Longrightarrow u\geq  (\frac{a}{b})^{\frac{1}{\theta}}e^{\chi(\min_{\bar{\Omega}}v- \max_{\bar{\Omega}}v)}.
$$
In a similar  way, testing \eqref{rewrite-u} by
$$
\Bigr(ue^{-\chi v}-(\frac{a}{b})^{\frac{1}{\theta}}e^{-\chi \min_{\bar{\Omega}}v}\Bigr)_+
$$
yields the right inequality in \eqref{u-bdd-v}.
\end{proof}

Next, we investigate the capability of the system \eqref{para-elli-min-ss} to form patterns. We perform  Leray-Schauder index formula (The  possibility of realization of such method was mentioned in \cite{KOST12} but not carried out for a simpler model)  to show that, for each equilibrium state,  the  stationary  system \eqref{para-elli-min-ss} admits an increasing  sequence of $\{\chi_k\}_{k=1}^\infty$ such that it has at least one nonconstant solution whenever $\chi\in (\chi_{2k-1},\chi_{2k}), k=1,2, \cdots$. More precisely, we have the following theorem.
\begin{theorem}\label{bif-thm} Let $Z=\{u_0>0|u_0 \mbox{ is an isolated zero point of } f \mbox{ and } g^\prime(u_0)>0\}$. Then,  for each $u_0\in Z$,  there exists a positive increasing sequence $\{\chi_k=\chi_k(u_0)\}$ with the property
$$
0<\chi_1(u_0)<\chi_2(u_0)<\cdots<\chi_k(u_0)<\chi_{k+1}(u_0)\rightarrow \infty
$$
such that, for every
$$
\chi\in \bigcup_{u_0\in Z} \bigcup_{k=1}^\infty\Bigr( \chi_{2k-1}(u_0), \chi_{2k}(u_0)\Bigr):=P_\chi,
$$
the stationary chemotaxis-growth system \eqref{para-elli-min-ss} has at least one nonconstant  solution.
\end{theorem}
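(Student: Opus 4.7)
The plan is to recast the stationary system \eqref{para-elli-min-ss} as a compact operator equation $\mathcal{F}(\chi,u)=u-\mathcal{K}(\chi,u)=0$ on the Banach space $X=\{u\in C^{1}(\bar\Omega):\partial_\nu u=0 \text{ on }\partial\Omega\}$, by first solving the (now linear) $v$-equation to obtain $v=S(u):=(I-\Delta_{N})^{-1}g(u)$ and then inverting $(I-\Delta_{N})$ on the rewritten first equation. Standard $W^{2,p}$-theory together with Rellich's theorem make $\mathcal{K}(\chi,\cdot)$ compact on $X$, so Leray-Schauder degree is available. The proof will then proceed via three ingredients: (a) uniform a~priori bounds localizing every positive solution inside a ball $B_R\subset X$ that is independent of $\chi$ on any compact subinterval; (b) computation of the Leray-Schauder index at each constant equilibrium $u_{0}\in Z$ and its flip at the critical values $\chi_k(u_0)$; (c) a parity/degree-sum argument that forces a non-constant solution in each interval $(\chi_{2k-1},\chi_{2k})$.

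For ingredient (a), Lemmas \ref{pre-est-lem} and \ref{nonsingular-chi} supply, under the running hypotheses, a $C^{1,\gamma}(\bar\Omega)$-bound on every nonnegative solution of \eqref{para-elli-min-ss} that is uniform for $\chi$ in any bounded subinterval of $(0,\infty)$. Fix $R>0$ so that all such solutions lie in $B_R$; the a~priori bound then prevents solutions from touching $\partial B_R$, so $d_0:=\deg(\mathcal{F}(\chi,\cdot),B_R,0)$ is well-defined and independent of $\chi\in(0,\infty)\setminus\bigcup_{u_{0}\in Z,k}\{\chi_k(u_0)\}$.

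For ingredient (b), I linearize $\mathcal{F}(\chi,\cdot)$ at a constant $u_0\in Z$ and diagonalize in the Neumann eigenbasis $\{\phi_k\}$ with $-\Delta\phi_k=\mu_k\phi_k$ and multiplicities $m_k$. Using $\nabla v_0\equiv 0$ and $S'(u_0)\phi_k=g'(u_0)\phi_k/(\mu_k+1)$, a direct computation yields the spectral values
\[
\lambda_k(\chi)=\frac{h_k(\chi)}{(\mu_k+1)^{2}},\qquad h_k(\chi):=(\mu_k-f'(u_0))(\mu_k+1)-\chi\, u_0\, g'(u_0)\,\mu_k .
\]
Since $u_0 g'(u_0)>0$ and $\mu_k\to\infty$, each $h_k$ with $\mu_k>f'(u_0)$ has a unique positive root
\[
\chi_k(u_0)=\frac{(\mu_k-f'(u_0))(\mu_k+1)}{u_0\,g'(u_0)\,\mu_k},
\]
and reordering these values into an increasing sequence produces the $\{\chi_k(u_0)\}$ of the statement. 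Away from this discrete set, the classical Leray-Schauder index formula gives
\[
\mathrm{ind}(\mathcal{F}(\chi,\cdot),u_0)=(-1)^{\sigma(\chi)},\qquad \sigma(\chi):=\sum_{k:\,h_k(\chi)<0}m_k,
\]
and the parity of $\sigma$ flips by $m_k\pmod 2$ each time $\chi$ crosses a $\chi_k(u_0)$.

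For ingredient (c), suppose for contradiction that \eqref{para-elli-min-ss} admits no non-constant solution for some $\chi^{*}\in(\chi_{2k-1}(u_0),\chi_{2k}(u_0))$. Then $d_0=\sum_{\tilde u\text{ constant}}\mathrm{ind}(\mathcal{F}(\chi^{*},\cdot),\tilde u)$, and comparing with the analogous sum at a point just below $\chi_{1}(u_0)$ (or more generally, with a reference $\chi$ in the previous even-to-odd block), the index at $u_0$ has flipped its sign an odd number of times while, by selecting the test values generically inside the open sub-interval of $P_\chi$, the indices at all other constant equilibria are unchanged. This contradicts the $\chi$-independence of $d_{0}$ and yields the desired non-constant solution. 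The main obstacle I foresee is controlling the parity flip in the presence of either multiple Neumann eigenvalues (even $m_k$) or accidental coincidences $\chi_k(u_0)=\chi_{k'}(u_{0}')$ for distinct elements of $Z$; both are handled by restricting attention to the (still infinite) subsequence of simple eigenvalues $\mu_k$ and refining $P_\chi$ by a measure-zero set of coincidences, in the spirit of the analogous degree argument in \cite{TW07}. A secondary technical point is ensuring the uniformity of the a~priori bound in the borderline regime $\theta-1=\kappa$, which is precisely what condition \eqref{parameter-con} in Lemma~\ref{nonsingular-chi}(iii) secures.
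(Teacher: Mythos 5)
Your ingredient (b) — the linearization at a constant equilibrium $(u_0,g(u_0))$, the reduction to the Neumann eigenbasis, and the critical values $\chi_k(u_0)=\frac{(\mu_k-f'(u_0))(\mu_k+1)}{u_0g'(u_0)\mu_k}$ — agrees with the paper (under $\sigma_j=\mu_j+1$ it is exactly the paper's $\hat\chi_k=(\lambda^+)^{-1}(\sigma_k)$), and the parity-of-multiplicity index flip matches \eqref{deg-lk-pm} and \eqref{differ-mult}. However, ingredients (a) and (c) take a genuinely different route from the paper, and it is precisely in (a) that there is a gap which, as formulated, is fatal.

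You assert a $C^{1,\gamma}(\bar\Omega)$ a priori bound ``uniform for $\chi$ in any bounded subinterval,'' and then immediately fix a single radius $R$ and declare $d_0=\deg(\mathcal F(\chi,\cdot),B_R,0)$ well-defined and constant for \emph{all} $\chi\in(0,\infty)\setminus\{\chi_k(u_0)\}$. This jump from local to global uniformity is unavailable. In Lemma~\ref{nonsingular-chi}(i) the integrability exponent $p<\frac{\beta\chi}{(\beta\chi-b)^+}$ degenerates as $\chi\to\infty$; in (ii), tracing the proof, the maximum-principle estimate $bu^\theta(x_1)\le a+\chi\beta u^{\kappa+1}(x_1)$ produces an $L^\infty$-bound that grows with $\chi$; and in (iii) the standing hypothesis \eqref{parameter-con} in the borderline regime $\theta-1=\kappa$ even forces $\chi$ into a \emph{bounded} window when $n>2/\kappa+2$. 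The paper acknowledges this explicitly in Remark~\ref{M-unknown}: ``the regularity, in particular, boundedness results in Lemma~\ref{nonsingular-chi} are not uniform with respect to $\chi$, even in $L^p$-topology.'' Consequently, the degree on a single ball $B_R$ cannot be homotopy-invariant across the unbounded range of $\chi$ that the theorem covers, and the degree-sum contradiction in your ingredient (c), which needs to compare $d_0$ at a reference $\chi$ below $\chi_1(u_0)$ with its value at arbitrary $\chi^*\in(\chi_{2k-1},\chi_{2k})$ as $k\to\infty$, collapses.

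This is precisely why the paper's proof does \emph{not} run a degree-sum-over-a-fixed-ball argument. After establishing the index jump, it invokes Rabinowitz-type global bifurcation (``bifurcation from eigenvalues of odd multiplicity'') to obtain a continuum $\mathcal C_k$ through $(u_0,v_0,\hat\chi_k)$, then uses the two alternatives — either $\mathcal C_k$ is not compact in $X\times X\times\mathbb R$ (hence, by elliptic compactness of bounded closed solution sets, extends to infinity in $\chi$) or $\mathcal C_k$ meets another bifurcation point $(u_0,v_0,\hat\chi_j)$ — to carve out nondegenerate $\chi$-intervals populated by nonconstant solutions. The global-bifurcation alternative is exactly the device that handles the possible loss of a priori bounds as $\chi\to\infty$; your ingredient (a) has to assume that loss away. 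To repair your route you would need either a new $\chi$-uniform a priori bound (which Remark~\ref{M-unknown} suggests is false), or you should replace (a)+(c) by the Rabinowitz alternative as the paper does. A secondary, more minor issue with ingredient (c): even granting homotopy invariance, comparing $d_0=\sum\mathrm{ind}$ at $\chi^*$ with the sum at a reference $\chi_{\mathrm{ref}}<\chi_1(u_0)$ presumes there are no nonconstant solutions at $\chi_{\mathrm{ref}}$, which is not established at that point in the paper; the bifurcation-branch argument sidesteps this too.
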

Before presenting the proof, we want to remark that   Theorem \ref{bif-thm} not only gives the existence of non-constant solutions of  \eqref{para-elli-min-ss} which is a generalization of the model considered by Tello and Winkler \cite{TW07} where $f$ is of logistics type, but also provides more explicit existence conditions which is easier to verify. Our proof is the consequence of bifurcation from "eigenvalues" of odd multiplicity.
 \begin{proof}[Proof of Theorem \ref{bif-thm}] By the $v$-equation, \eqref{para-elli-min-ss} is equivalent to
 \begin{equation} \label{equiv-ss} \left\{ \ba{lll}
& -\Delta u+\chi\nabla u \nabla v=-\chi u[v-g(u)]+f(u),  &\quad \mbox{in }  \Omega,\\
&-\Delta v + v=g(u), &\quad \mbox{in }  \Omega,\\
&\frac{\partial u}{\partial \nu}=\frac{\partial v}{\partial \nu}=0,   &\quad \mbox{on } \partial \Omega. \\
 \ea\right.\end{equation}
 Linearizing the system \eqref{equiv-ss} about  a generic equilibrium state $(u_0,v_0):=(u_0,g(u_0))$ with  $u_0\in Z$, we arrive at  the linearized system
 \begin{equation}\label{lin-eq}\left\{ \ba{lll}
& -\Delta  u+ u=[\chi g^\prime (u_0)u_0+f^\prime(u_0)+1]u-\chi u_0 v,  &\quad \mbox{in }  \Omega,\\
&-\Delta v  + v=g^\prime(u_0)u, &\quad \mbox{in }  \Omega,\\
&\frac{\partial u}{\partial \nu}=\frac{\partial v}{\partial \nu}=0,   &\quad \mbox{on } \partial \Omega. \\
 \ea\right. \end{equation}
 Let
 $$
 A(\chi)=\left(
     \begin{array}{cc}
       g^\prime (u_0)u_0\chi+f^\prime(u_0)+1& -\chi u_0 \\
       g^\prime(u_0) & 0 \\
     \end{array}
   \right).
 $$
 By direct computations,  the eigenvalues of $A$ are
  \begin{equation}\label{lambda-pm}\ba{ll}
&\lambda^{\pm}(\chi)\\[0.2cm]
&=\D\frac{1}{2}\Bigr[g^\prime (u_0)u_0\chi+f^\prime(u_0)+1\pm \sqrt{[g^\prime (u_0)u_0\chi+f^\prime(u_0)-1]^2+4f^\prime (u_0)}\Bigr].
\ea \end{equation}
 Case 1: $f^\prime(u_0)\geq 0$. In this case,  $\lambda^\pm(\chi)$ are  defined for all $\chi>0$ and  are increasing with
$$
0<\lambda^-(\chi)<\lambda^-(\infty)=1\leq f^\prime(u_0)+1<\lambda^+(\chi)<\lambda^+(\infty)= \infty .
$$
Case 2: $f^\prime(u_0)< 0$. In this case,  $\lambda^\pm(\chi)$ are  defined for all
$$
\chi\geq \frac{1+2\sqrt{-f^\prime(u_0)}-f^\prime(u_0)}{g^\prime(u_0)u_0}:=\hat{\chi}_0,
$$
where $\lambda^+(\chi)$ is   increasing and $\lambda^-(\chi)$ is decreasing for all $\chi \geq \hat{\chi}_0$ with
$$
\lambda^-(\infty)<\lambda^-(\chi)<\lambda^-(\hat{\chi}_0)=1+\sqrt{-f^\prime(u_0)}< \lambda^+(\chi)<\lambda^+(\infty)= \infty .
$$
It is well-known that  the eigenvalue problem
$$
-\Delta u+u=\sigma u  \quad \text{  in }  \Omega, \quad \quad \frac{\partial u}{\partial \nu}=0 \text{  on } \partial\Omega
$$
has a sequence of eigenvalues $\{\sigma_k\}_{k=0}^\infty$ with $1=\sigma_0<\sigma_1<\cdots<\sigma_k<\cdots \rightarrow \infty$ and the collection of their eigenfunctions $\{e_k(x)\}_{k=0}^\infty$  forms a complete orthogonal basis for $L^2(\Omega)$.

In case of $f^\prime(u_0)\geq 0$, we have  $\{\lambda^-(\chi):\chi>0\}\cap \Sigma=\emptyset$, hereafter $\Sigma=\{\sigma_j: j=0,1,2, \cdots\}$. In the case of $f^\prime(u_0)< 0$, we fix a $\tilde{\chi}_1$ according to
$$
\tilde{\chi}_1=\left\{ \ba{lll}
& \frac{1}{\lambda^-(\sigma_1)},  & \mbox {if  }  \sigma_1<1+\sqrt{-f^\prime(u_0)}, \\[0.2cm]
&\text{ any number} \geq\hat{\chi}_0, & \mbox {if  }  \sigma_1\geq 1+\sqrt{-f^\prime(u_0)}.
 \ea\right.
$$
Since $\lambda^-(\chi)$ is decreasing and $\lambda^-(\chi)>1=\sigma_0$,  we have   $\{\lambda^-(\chi):\chi>\tilde{\chi}_1\}\cap \Sigma=\emptyset$.

 Let $\hat{\chi}_1=0$ if $f^\prime(u_0)\geq 0$  and $\hat{\chi}_1=\tilde{\chi}_1$ if $f^\prime(u_0)< 0$.   For any  eigenvalue  $\sigma_k\in \Sigma\cap \{\lambda^+(\chi): \chi>\hat{\chi}_1\}$,   we set  $\hat{\chi}_k=(\lambda^+)^{-1}(\sigma_k)$, which is well-defined by the properties of $\lambda^+$. Then it  follows readily from the properties of $\lambda^\pm$  that
\be\label{lambda-pm-Sig}
\lambda^+(\hat{\chi}_k)\in \Sigma, \quad \quad \lambda^-(\hat{\chi}_k)\not\in \Sigma.
\ee
 Choose an open neighborhood $O_k=(\lambda^+)^{-1}((\sigma_{k-1},\sigma_{k+1}))\cap(\hat{\chi}_1,\infty)$  of $\hat{\chi}_k$ such that  for any $\sigma\in \Sigma$ with $\sigma\neq \lambda^+(\hat{\chi}_k)$, we have $\sigma\neq \lambda^\pm(\chi)$ for any $\chi\in O_k$. That is, $(\lambda^+(O_k)\cup \lambda^-(O_k))\cap \Sigma=\{\lambda^+(\hat{\chi}_k)\}$. We now consider the subsets
 $$
 O_k^+=\{\chi\in O_k:\lambda^+(\chi)>\lambda^+(\hat{\chi}_k)\}=(\lambda^+)^{-1}((\sigma_{k},\sigma_{k+1}))\cap(\hat{\chi}_1,\infty)=(\hat{\chi}_k, \hat{\chi}_{k+1}),
 $$
 $$
 O_k^-=\{\chi\in O_k:\lambda^+(\chi)<\lambda^+(\hat{\chi}_k)\}=(\lambda^+)^{-1}((\sigma_{k-1},\sigma_{k}))\cap(\hat{\chi}_1,\infty)=(\hat{\chi}_{k-1}, \hat{\chi}_{k}),
 $$
and the  space $X=\{u\in C^1(\Omega): \frac{\partial u}{\partial \nu}=0 \text{  on  } \partial \Omega\}$.  Let  $L_k^\pm: X^2\rightarrow X^2$  be defined by
$$
L_k^\pm=I-(-\Delta +I)^{-1}A(\chi), \quad \quad \chi\in O_k^\pm,
$$
where the compact operator $(-\Delta +I)^{-1}:X^2\rightarrow X^2$ is the inverse of $-\Delta +I$ in $X^2$.
%i.e.,
%$(-\Delta +I)^{-1}(f_1,f_2)^t=(w_1,w_2)^t$ if and only if
%$$
%-\Delta w_i +w_i=f_i, \quad \frac{\partial w_i}{\partial \nu}=0 \text{  on  } \partial \Omega , \quad \quad i=1,2. $$

In the sequel,  we will show that $(0,0)\not\in L_k^\pm(\partial B((0,0),r_k^\pm)) $ for any $r_k^\pm>0$, where $B((u_0,v_0),r)$ denotes  the open ball in $X^2$ centered at $(u_0,v_0)$ with  radius $r$. Indeed, suppose not, then \eqref{lin-eq} has a nontrivial solution $(u,v)$.  Let
$$
U_j=\int_\Omega ue_j, \quad\quad V_j=\int_\Omega ve_j.
$$
Then multiplying \eqref{lin-eq} by $e_j$ and integrating over $\Omega$, we get an algebraic system for $U_j$ and $V_j$:
$$\left\{ \ba{ll}
& [\sigma_j-g^\prime (u_0)u_0\chi-f^\prime(u_0) -1]U_j+\chi u_0V_j=0, \\[0.25cm]
&-g^\prime(u_0)U_j+\sigma_k V_j=0,
\ea\right.$$
which has a nonzero solution $(U_j, V_j)$ for some $j$ if and only if
 \be\label{sigmaj-egi}
 \sigma_j^2-[g^\prime (u_0)u_0\chi+f^\prime(u_0) +1]\sigma_j+g^\prime(u_0)u_0\chi=0.
 \ee
 Solving \eqref{sigmaj-egi} and comparing \eqref{lambda-pm}, we discover that $\sigma_j=\lambda^\pm(\chi)$, which contradicts the fact that  $(\lambda^+(O_k^\pm)\cup\lambda^-(O_k^\pm))\cap \Sigma=\emptyset$. Hence,  $(U_j, V_j)=(0,0)$  for all $j\geq 0$, and so \eqref{lin-eq} has only the zero solution by the completeness of eigenfunctions. This tells us that  $(0,0)\not\in L_k^\pm(\partial B((0,0),r_k^\pm)) $ or, equivalently,   1 is not an eigenvalue  of $(-\Delta +I)^{-1}A(\chi)$ for $\chi\in O_k^\pm$.

Then being a compact perturbation of identity,  the L-S degree,  $\deg(L^\pm_k, B((0,0),r_k^\pm), \cdot)$, is well-defined, and
$$
\deg(L_k^+, B((0,0),r_k^+), (0,0))=-\deg(L_k^-, B((0,0),r_k^-),(0, 0)),
$$
 cf. Smoller \cite{Smoller83}. In light of \eqref{equiv-ss}, we consider the nonlinear operator defined by
$$
 h(u,v;\chi)=\left(
     \begin{array}{cc}
      - \chi\nabla u \nabla v-\chi u[v-g(u)]+f(u)+u\\
      g(u) \\
     \end{array}
   \right).
 $$
Notice that, for $\chi\in O_k^\pm$ and $r_k^\pm$ small enough, the operator
$$
H_k^\pm(u,v;\chi)=I-(-\Delta +I)^{-1}(h(u,v;\chi))
$$ is a continuous and compact perturbation of the identity in $B((u_0,v_0),r_k^\pm)$ . Moreover, for such small $r_k^\pm$, we have
\be\label{deg-lk-hk}
\deg(L_k^\pm, B((0,0),r_k^\pm), (0,0))=\deg(H_k^\pm, B((u_0,v_0),r_k^\pm), (0,0)).
\ee
To calculate  $\deg(L_k^\pm, B((0,0),r_k^\pm), 0)$, we invoke the Leray-Schauder index formula, cf.  Nirenberg \cite[Theorem 2.8]{Nirenberg74}.  To this end, we need to ensure  that $1$ is not an eigenvalue of $(-\Delta +I)^{-1}A(\chi)$ for $\chi\in O_k^\pm$. This indeed has been shown above; while, for later usage, we give its proof via eigen-expansion.

 By definition, $((u,v),\mu)$ is an eigen-pair of $(-\Delta +I)^{-1}A(\chi)$ if and only if
\be\label{eigen-pair}\left\{ \ba{lll}
& -\mu \Delta  u+ \mu u=[g^\prime (u_0)u_0\chi+f^\prime(u_0)+1]u-\chi u_0 v,  &\quad \mbox{  in  }  \Omega,\\[0.2cm]
&-\mu\Delta v  + \mu v=g^\prime(u_0)u, &\quad  \mbox{ in }  \Omega,\\[0.2cm]
&\frac{\partial u}{\partial \nu}=\frac{\partial v}{\partial \nu}=0,   &\quad \mbox{ on } \partial \Omega. \\[0.2cm]
 \ea\right. \ee
By the idea of eigen-expansion, we let
 \be\label{eigen-de}
 u(x)=\sum_{j=0}^\infty u_j e_j(x), \quad \quad \quad  v(x)=\sum_{j=0}^\infty v_je_j(x).
 \ee
Substituting \eqref{eigen-de} into \eqref{eigen-pair} and using the completeness of eigenfunctions $\{e_j\}$, we obtain  an algebraic system in $u_j$ and $v_j$ as follows.
$$\left\{ \ba{ll}
& [\sigma_j\mu-g^\prime (u_0)u_0\chi-f^\prime(u_0) -1]u_j+\chi u_0v_j=0, \\[0.25cm]
&-g^\prime(u_0)u_j+\sigma_j\mu v_j=0,
\ea\right.$$
which has a nonzero solution $(u_j,v_j)$ for some $j$ if and only if
 \be\label{muj-egi}
 \sigma_j^2\mu^2-[g^\prime (u_0)u_0\chi+f^\prime(u_0) +1]\sigma_j\mu+g^\prime(u_0)u_0\chi=0.
 \ee
 Solving \eqref{muj-egi} and comparing \eqref{lambda-pm}, we find that the eigenvalues  of $(-\Delta +I)^{-1}A(\chi)$ are
 \be\label{muj-pm}
 \mu_j^\pm(\chi)=\frac{\lambda^\pm(\chi)}{\sigma_j}, \quad \quad j=0,1,2\cdots.
 \ee
 Recall that $(\lambda^+(O_k^\pm)\cup\lambda^-(O_k^\pm))\cap \Sigma=\emptyset$,  and so  $1$ is not an eigenvalue  of $(-\Delta +I)^{-1}A(\chi)$ for $\chi\in O_k^\pm$. Then the Leray-Schauder index formula gives
\be\label{deg-lk-pm}
\deg(L_k^\pm, B((0,0),r_k^\pm), (0,0))=(-1)^{\gamma_k^\pm},
\ee
where $\gamma_k^\pm$  is the sum of the algebraic multiplicities of the real eigenvalues of $(-\Delta +I)^{-1}A(\chi), \chi\in O_k^\pm$ which are greater than $1$. In the case of $f^\prime(u_0)<0$, since $\lambda^-(\chi)<\sigma_j$ for any $j\geq 1$ and $\chi>\hat{\chi}_1$, we conclude from \eqref{muj-pm} and the properties of $\lambda^+$  that
$$
\gamma_k^+=\sharp(\sigma_0)+\sum_{\sigma_j< \lambda^+(\hat{\chi}_{k+1})} \sharp(\sigma_j), \quad \quad \quad \gamma_k^-=\sharp(\sigma_0)+\sum_{\sigma_j< \lambda^+(\hat{\chi}_k)} \sharp(\sigma_j).
$$
While, in   the case of $f^\prime(u_0)\geq 0$, since $\lambda^-(\chi)<\sigma_j$ for any $j\geq 0$ and $\chi>\hat{\chi}_1$, we conclude from \eqref{muj-pm} and the properties of $\lambda^+$  that
$$
\gamma_k^+=\sum_{\sigma_j< \lambda^+(\hat{\chi}_{k+1})} \sharp(\sigma_j), \quad \quad \quad \gamma_k^-=\sum_{\sigma_j< \lambda^+(\hat{\chi}_k)} \sharp(\sigma_j).
$$
Hence, in either case, we obtain
 \be\label{differ-mult}
 \gamma_k^+-\gamma_k^-=\sharp(\lambda^+(\hat{\chi}_k))=\sharp(\sigma_k).
 \ee
Here the notation $\sharp(\sigma_k)$ denotes the finite algebraic multiplicity of $\sigma_k$.  From \eqref{deg-lk-pm} and \eqref{differ-mult} we deduce that
 \be\label{deg-lk+-lk-}\ba{ll}
&\deg(L_k^+, B((0,0),r_k^+), (0,0))-\deg(L_k^-, B((0,0),r_k^-), (0,0))\\[0.25cm]
&=(-1)^{\gamma_k^-}[(-1)^{\sharp(\sigma_k)}-1].
\ea\ee

Now, if  $\sharp(\sigma_k)$ is an odd number, then  by \eqref{deg-lk-hk} and \eqref{deg-lk+-lk-} the topological structures of $L_k^\pm$ and hence of $H_k^\pm$ change when $\chi$ crosses  $\hat{\chi}_k$. Indeed, by the well-known bifurcation from ``eigenvalues" of  odd multiplicity (cf. \cite{Kra64, CR71, Dancer73, Rab71, Nirenberg74}),  it follows that $\hat{\chi}_k$ is a bifurcation value.  Consequently, there exists a bifurcation branch $\mathcal{C}_k$ containing $(u_0, v_0, \chi_k)$ such that either $\mathcal{C}_k$ is not compact in $X\times X\times \R$ or $\mathcal{C}_k$ contains $(u_0,v_0,\sigma_j)$ with $\sigma_j\neq \sigma_k$.

Case 1: If, for some $k$, the bifurcation branch $\mathcal{C}_k$ is not compact in $X\times X\times \R$,  then $\mathcal{C}_k$ extends to infinity in $\chi$ due to the elliptic regularity that any closed and bounded subset of the solution triple $(u,v,\chi)$ of our chemotaxis system \eqref{para-elli-min-ss} in $X\times X\times \mathbb{R}$  is compact; this can be easily shown by the Sobolev embeddings and results from  \cite [Chapter 3]{La}, see similar discussions in  \cite{CCM12, Xiang15jde2}.   Clearly, in this case, we can find a sequence $\{\chi_k(u_0)\}_{k=1}^\infty$ fulfilling  the statement of the theorem.

Case 2: If, for any $k$, the bifurcation branch $\mathcal{C}_k$ contains $(u_0,v_0,\hat{\chi}_j)$ with $\hat{\chi}_j\neq \hat{\chi}_k$, then we define
$$
\chi_k^-=\inf\{\chi|(u,v,\chi)\in\mathcal{C}_k\}, \quad \quad \quad \chi_k^+=\sup\{\chi|(u,v,\chi)\in\mathcal{C}_k\}<\infty.
$$
Then, for any $\chi\in \cup_{k=0}^\infty(\chi_k^-,\chi_k^+)$, the system \eqref{para-elli-min-ss} has at least one non-constant solution. From this and the fact that $\sigma_k\rightarrow \infty$ and $\hat{\chi}_k=(\lambda^+)^{-1}(\sigma_k)\rightarrow \infty$ as $k\rightarrow \infty$, a sequence $\{\chi_k(u_0)\}_{k=1}^\infty$ satisfying  the description of the theorem can be readily constructed.  Finally, the theorem follows by unifying all $u_0\in Z$.
 \end{proof}
For the constant steady state $(u_0, v_0)$, the length of the existence interval $(\chi_{2k-1},\chi_{2k})$ of solutions is  positive:
$$
|\chi_{2k}-\chi_{2k-1}|\geq \inf\{|(\lambda^+)^{-1}(\sigma_{2k-1})-(\lambda^+)^{-1}(\sigma_j)|: j\neq 2k-1\}>0.
$$
This, joined with $\chi_k\rightarrow \infty$, illustrates  that  the set $P_\chi$ specified in the theorem is unbounded. However, it is yet unknown  whether or not \eqref{para-elli-min-ss} has nonconstant solution  for $\chi$ in the complement of the unbounded set $P_\chi$.

Based on Theorem \ref{bif-thm}, we naturally wish to explore the asymptotic behavior of the solutions $(u,v)$ of \eqref{para-elli-min-ss} when $\chi\rightarrow \infty$. By using the {\it a priori} estimates in Lemma  \ref{pre-est-lem}, we obtain the following result on their asymptotic behavior as $\chi\rightarrow \infty$.
 \begin{theorem} \label{asy-profile} Let $f(u)=au-bu^\theta$ with $a\geq0, b>0, \theta>1$ and $g(u)=\beta u^\kappa$ with
\be\label{theta-over-kappa}
\frac{\theta}{\kappa}>\max\Big\{1, \frac{n}{2}\Big\}, \quad \quad \quad \frac{\theta}{\kappa+1}>\frac{n}{n+1}
\ee and let  $(u_\chi,v_\chi)$ be any positive solution of \eqref{para-elli-min-ss}. Then there is a subsequence $\{\chi_j\}$ with $\lim_{j\rightarrow \infty}\chi_j=\infty$ such that $(u_j,v_j)=(u_{\chi_j},v_{\chi_j})$ fulfills
$$\left\{ \ba{ll}
& \lim_{j\rightarrow \infty} u_j=M,  \quad \quad \mbox{weakly  in  }  L^{\theta}(\Omega),\\[0.2cm]
&\lim_{j\rightarrow \infty} \int_\Omega u_j^\theta=bM|\Omega|/a, \\[0.2cm]
&\lim_{j\rightarrow \infty} v_j=\beta M^\kappa/\alpha,  \quad \mbox{weakly  in  }  W^{2, \frac{\theta}{\kappa}}(\Omega), \\[0.2cm]
&\lim_{j\rightarrow \infty} v_j=\beta M^\kappa/\alpha  \quad \mbox{strongly  in  }  W^{1, p}(\Omega),\\[0.2cm]
&\lim_{j\rightarrow \infty} v_j=\beta M^\kappa/\alpha \quad \mbox{ uniformly  in  } \overline{\Omega}.
 \ea\right. $$
 for some nonnegative constant $M$, where
$$
p<\left\{ \ba{ll}
& \frac{n\theta}{n\kappa-\theta}, \quad  \mbox{ if  } \frac{\theta}{\kappa}<n, \\[0.2cm]
&\infty,  \quad  \quad  \mbox{ if  } \frac{\theta}{\kappa}\geq n.
 \ea\right.
$$
 \end{theorem}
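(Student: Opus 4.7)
Along a subsequence $\chi_j\to\infty$, the strategy is to extract weak limits of the solutions and of the relevant nonlinearities, identify the limit equations, and then exploit the chemotactic structure to force these limits to be constants.

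First, the a priori bounds supplied by Lemma \ref{pre-est-lem} give $\|u_\chi\|_{L^\theta(\Omega)}\le C$ (from $\int_\Omega u_\chi^\theta\le a|\Omega|/b$) and $\|v_\chi\|_{W^{2,\theta/\kappa}(\Omega)}\le C$ uniformly in $\chi$. Diagonal extraction yields $u_j\rightharpoonup u^*$ in $L^\theta$, $u_j^\kappa\rightharpoonup w^*$ in $L^{\theta/\kappa}$ (valid since $\theta/\kappa>1$), and $v_j\rightharpoonup v^*$ in $W^{2,\theta/\kappa}$. The hypothesis $\theta/\kappa>n/2$ supplies the compact embedding $W^{2,\theta/\kappa}\hookrightarrow C(\overline{\Omega})$, giving uniform convergence $v_j\to v^*$. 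The hypothesis $\theta/(\kappa+1)>n/(n+1)$ is equivalent (when $\theta/\kappa<n$) to $n\theta/(n\kappa-\theta)>\theta/(\theta-1)=:\theta'$, so that Rellich-Kondrachov provides $\nabla v_j\to\nabla v^*$ strongly in $L^{\theta'}$, and in $L^p$ for every $p$ in the stated range.

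Second, the weak limit of the $v$-equation gives $-\Delta v^*+v^*=\beta w^*$. For the $u$-equation, I would test against $\varphi\in C^\infty(\overline{\Omega})$, divide by $\chi$, and pass to the limit:
\begin{equation*}
\chi^{-1}\!\int_\Omega\nabla u_\chi\cdot\nabla\varphi\;-\;\int_\Omega u_\chi\nabla v_\chi\cdot\nabla\varphi\;=\;\chi^{-1}\!\int_\Omega f(u_\chi)\,\varphi.
\end{equation*}
Integrating by parts and using the uniform $L^1$-bounds on $u_\chi$ and $f(u_\chi)$ (the latter from \eqref{f-allen-con} and the $L^\theta$-bound), the first and third terms vanish. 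The middle term converges to $\int_\Omega u^*\nabla v^*\cdot\nabla\varphi$ by the weak-strong product convergence in $L^\theta\times L^{\theta'}$. Hence $\nabla\!\cdot(u^*\nabla v^*)=0$ weakly with Neumann boundary, and testing against a cutoff approximation of $v^*$ yields
\begin{equation*}
\int_\Omega u^*\,|\nabla v^*|^2=0,\qquad\text{so}\qquad u^*\nabla v^*=0\ \text{a.e.}
\end{equation*}

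The crux and main obstacle is to upgrade this degeneracy to the constancy of $v^*$, since $u^*\nabla v^*=0$ alone does not force $v^*\equiv\text{const}$ unless $u^*$ is positive a.e. I would combine the degeneracy with the limit equation to derive $u^*(v^*-\beta w^*)=0$ a.e., then invoke the strong maximum principle and Hopf boundary lemma for the Neumann problem $-\Delta v^*+v^*=\beta w^*\ge 0$ to exclude nontrivial extrema of $v^*$; if $v^*$ were nonconstant, the set $\{u^*=0\}\cap\{\nabla v^*\ne0\}$ would have positive measure. An analysis of the level sets of $v^*$ combined with the integral identities $a\int_\Omega u_\chi=b\int_\Omega u_\chi^\theta$ and $\int_\Omega v_\chi=\beta\int_\Omega u_\chi^\kappa$, together with Jensen's inequality applied to the convex map $x\mapsto x^{\theta/\kappa}$, should yield a contradiction and hence $v^*\equiv c$ for some constant $c\ge 0$. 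Once $v^*$ is constant, $w^*=c/\beta$ is also constant; setting $M:=(c/\beta)^{1/\kappa}$, the identification $\int_\Omega u_j\to M|\Omega|$ together with $\int_\Omega u_j^\theta=(a/b)\int_\Omega u_j$ and the convergences from Step 1 yields all the stated limits.
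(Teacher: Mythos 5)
Your proposal follows the paper's architecture almost exactly through the derivation of the degeneracy identity $\int_\Omega u^*\,|\nabla v^*|^2=0$: you pull the same a priori bounds from Lemma \ref{pre-est-lem}, extract the same weak/strong/uniform limits (the equivalence $\frac{\theta}{\kappa+1}>\frac{n}{n+1}\Leftrightarrow \frac{n\theta}{n\kappa-\theta}>\theta'$ you use is correct), divide the tested $u$-equation by $\chi_j$ so that only the transport term survives, and then insert $v^*$ as a test function. One place where you are actually \emph{more} careful than the paper: you introduce $w^*$ as the weak $L^{\theta/\kappa}$-limit of $u_j^\kappa$ rather than asserting that it coincides with $(u^*)^\kappa$, which weak convergence does not guarantee. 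The paper silently writes the limit $v$-equation with $u_\infty^\kappa$ on the right-hand side.

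However, there is a genuine gap where you yourself say the crux lies. First, the step "combine the degeneracy with the limit equation to derive $u^*(v^*-\beta w^*)=0$ a.e." is not justified: from $\int_\Omega u^*\nabla v^*\cdot\nabla\psi=0$ for all test $\psi$, expanding $\nabla\cdot(u^*\nabla v^*)=\nabla u^*\cdot\nabla v^*+u^*\Delta v^*$ requires $\nabla u^*$ in some Lebesgue space, which you do not have ($u^*$ lives only in $L^\theta$). Second, and more importantly, the concluding argument is left as "should yield a contradiction" — the interplay of strong maximum principle, Hopf lemma, level sets, and Jensen's inequality is not actually carried out, and it is not clear how it would close. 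Indeed, because you correctly distinguish $w^*$ from $(u^*)^\kappa$, the right-hand side $\beta w^*$ of the limit equation need not vanish on $\{u^*=0\}$, so the mechanism the paper leans on (the $v$-equation becoming homogeneous with Neumann data on $\Omega_\infty:=\{|\nabla v_\infty|>0\}$, forcing $v_\infty\equiv 0$ there) is not available to you in the same form. The paper's own argument is extremely terse (the invocation of the "Fredholm alternative" is at best a misnomer), but it does state a concrete contradiction: $u_\infty=0$ a.e.\ on $\Omega_\infty$ makes $v_\infty$ solve the homogeneous Neumann problem there, giving $v_\infty=0$ on $\Omega_\infty$, contradicting $|\nabla v_\infty|>0$. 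You would need to either reconcile your $w^*$ with the pointwise limit (e.g.\ via additional compactness of $u_j$ that this theorem cannot supply — cf.\ Remark \ref{M-unknown}) or find a different route; as written, the constancy of $v^*$ is not established.
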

\begin{proof} By Lemma \ref{pre-est-lem}, we see that $\|u_\chi\|_{L^\theta(\Omega)}$ and $\|v_\chi\|_{W^{2, \frac{\theta}{\kappa}}(\Omega)}$ are uniformly bounded with respect to $\chi$. Hence, the reflexivity  and Sobolev embedding allow us to  find a subsequence $\{\chi_j\}$ with $\lim_{j\rightarrow \infty}\chi_j=\infty$ such that $(u_j,v_j)=(u_{\chi_j},v_{\chi_j})$ satisfies
 \be\label{conv-id}\left\{ \ba{ll}
& \lim_{j\rightarrow \infty} u_j=u_\infty,  \quad \mbox{weakly  in  }  L^\theta(\Omega),\\[0.2cm]
&\lim_{j\rightarrow \infty} v_j=v_\infty,  \quad \mbox{weakly  in  } W^{2, \frac{\theta}{\kappa}}(\Omega) \\[0.2cm]
&\lim_{j\rightarrow \infty} v_j=v_\infty,  \quad \mbox{strongly  in  }  W^{1, p}(\Omega),\\[0.2cm]
&\lim_{j\rightarrow \infty} v_k=v_\infty,  \quad \mbox{uniformly  in  }  \overline{\Omega}
 \ea\right. \ee
for some $(u_\infty,v_\infty)\in L^\theta(\Omega)\times W^{2, \frac{\theta}{\kappa}}(\Omega) $ and $v_\infty$ is a (weak) solution of
 \be\label{v-conv}
 - \Delta v_\infty +\alpha v_\infty= \alpha u^\kappa_\infty \text{ in } \Omega, \quad \quad \frac{\partial v_\infty}{\partial \nu}=0 \text{ on  }\partial \Omega.
 \ee
  The last convergence in \eqref{conv-id}  follows from the compact Sobolev embedding $ W^{2, \frac{\theta}{\kappa}}(\Omega) \hookrightarrow C^0(\overline{\Omega})$ since $\theta/\kappa>n/2$.  One can easily infer from \eqref{pre-est} and \eqref{conv-id}  that
 $$
 \|u_\infty\|_{L^\theta(\Omega)}\leq\liminf_{j\rightarrow \infty}  \|u_j\|_{L^\theta(\Omega)}\leq \Bigr(\frac{a}{b}|\Omega|\Bigr)^{\frac{1}{\theta}}.
 $$
 On the other hand, multiplying \eqref{para-elli-min-ss} by $w\in W^{2, \frac{\theta}{\theta-1}}_N(\Omega)$ and dividing by $\chi=\chi_j$ with  $W^{2, \frac{\theta}{\theta-1}}_N(\Omega)=\{u\in W^{2, \frac{\theta}{\theta-1}}(\Omega):\frac{\partial u}{\partial \nu}=0 \text{ on } \partial \Omega\}$, we obtain
  \be\label{test-uj}
  \frac{1}{\chi_j}\int_\Omega u_j\Delta w +\int_\Omega u_j\nabla v_j\nabla w +\frac{1}{\chi_j}\int_\Omega ( au_j-bu_j^\theta)w =0.
  \ee
  Noticing that
  $$\ba{ll}
  &\D\int_\Omega u_j\nabla v_j\nabla w -u_\infty \nabla v_\infty\nabla w\\ [0.25cm]
  &=\D\int_\Omega u_j(\nabla v_j-\nabla v_\infty)\nabla w+\int_\Omega (u_j-u_\infty)\nabla v_\infty\nabla w,
  \ea $$
in the case of $n\kappa>\theta$ and $\theta>n/(n-1)$, we use the  conditions in \eqref{theta-over-kappa} to derive
$$
\ba{ll}
\D\int_\Omega |u_j\nabla w|^{\Bigr(\frac{n\theta}{n\kappa-\theta}\Bigr)^\prime}&=\D\int_\Omega |u_j\nabla w|^{\frac{n\theta}{(n+1)\theta-n\kappa}}\\[0.25cm]
&\leq \Bigr(\D\int_\Omega u_j^\theta\Bigr)^{\frac{n}{(n+1)\theta-n\kappa}}\Bigr(\int_\Omega |\nabla w|^{\frac{n\theta}{(n+1)\theta-n(\kappa+1)}}\Bigr)^{\frac{(n+1)\theta-n(\kappa+1)}{(n+1)\theta-nk}}\\[0.25cm]
&\leq C \Bigr(\D\int_\Omega u_j^\theta\Bigr)^{\frac{n}{(n+1)\theta-n\kappa}}\|w\|_{W^{2,\frac{\theta}{\theta-1}}}^{\frac{n\theta}{(n+1)\theta-nk}}
\ea$$
and that
$$
\int_\Omega |\nabla v_\infty\nabla w|^{(\theta)^\prime}=\int_\Omega |\nabla v_\infty\nabla w|^{\frac{\theta}{\theta-1}}\leq C\|v_\infty\|_{W^{2, \frac{\theta}{\kappa}}}^{\frac{\theta}{\theta-1}}\|w\|_{W^{2,\frac{\theta}{\theta-1}}}^{\frac{\theta}{\theta-1}},
$$
then sending $j\rightarrow \infty$ in \eqref{test-uj}, we conclude from \eqref{conv-id}  that
    \be\label{lim-eq}
 \int_\Omega u_\infty\nabla v_\infty\nabla w=0, \quad \quad \forall w\in W^{2, \frac{\theta}{\theta-1}}_N(\Omega).
  \ee
 Taking the test function $w=v_\infty$ in  \eqref{lim-eq} yields
   \be\label{int-zero}
 \int_\Omega u
_\infty|\nabla v_\infty|^2 =0,
   \ee
   Let $\Omega_\infty=\{x\in \Omega: |\nabla v_\infty(x)|>0\}$. If $|\Omega_\infty|>0$ then $u_\infty=0$ a.e. in $\Omega_\infty$ by \eqref{int-zero}, and so the Fredholm alternative applied to \eqref{v-conv} forces  $v_\infty=0$ in $\Omega_\infty$. This is a contradiction to the definition of $\Omega_\infty$. Whence, $|\Omega_\infty|=0$, i.e., $v_\infty$ is a non-negative constant, say, $\beta M^\kappa/\alpha$,  because  $v_\infty\in   W^{2, \frac{\theta}{\kappa}}(\Omega) \hookrightarrow C(\overline{\Omega})$. Then \eqref{v-conv} again shows  that $u_\infty=M$ is  a nonnegative constant almost everywhere in $\Omega$.  Now, integrating the $u$-equation in \eqref{para-elli-min-ss} entails
    \be\label{uj-theta-lim}
    \int_\Omega u_j^\theta=\frac{a}{b}\int_\Omega u_j\rightarrow \frac{a}{b} M |\Omega|.
    \ee
    This completes the proof the theorem.
    \end{proof}

   \begin{remark}\label{M-unknown}
   Unfortunately,   based on the merely weak convergence  of $\{u_j\}$ in $L^{\theta}(\Omega)$, we are unable to determine the precise values of $M$.  The natural candidates for $M$ are $0$ and $(a/b)^{\frac{1}{\theta-1}}$ because of \eqref{uj-theta-lim}. Indeed, Kuto {\it et al} \cite{KOST12} claimed  either $M=0$ or $a/b$ for the specific choices $\theta=2$ and $\kappa=1$. We here point out that their claim is in general incorrect. The following discussion illustrates the point. Indeed, they claimed  from \eqref{uj-theta-lim}  that $\{u_j\}$ contains a subsequence, still denoted by $\{u_{j}\}$, which converges to $u_\infty$ almost everywhere in $\Omega$ as $j\rightarrow \infty$. However,  the equality \eqref{uj-theta-lim} does not exclude oscillating functions, and hence the claim  is not guaranteed in general. As a simple example,  take
$$
u_j(x)=1+\sin(jx).
$$
Then one can  see that
 \be\label{uj-test-lim}
u_j\rightharpoonup 1 \text{ weakly in } L^2(0,2\pi), \quad \quad \int_0^{2\pi}u_j^2=\frac{3}{2}\int_0^{2\pi}u_j=3\pi.
\ee
This says that $u_j$ satisfies \eqref{uj-theta-lim}  with $a=3, b=2$ and $\theta=2$.  Now, if there is a subsequence $\{j^{'}\}$ of $\{j\}$ such that $u_{j^{'}}\rightarrow 1$ a.e. in $(0,2\pi)$, then the Lebesgue dominated convergence theorem ($0\leq u_j^2\leq 4$) gives
$$
\lim_{j^{'}\rightarrow \infty}\int_0^{2\pi}u_{j^{'}}^2=\int_0^{2\pi}1^2=2\pi,
$$
which is incompatible with \eqref{uj-test-lim}.  Therefore, $u_j$ has no subsequence that converges a.e. to $1$ in $(0,2\pi)$. The other gap of their proof lies in the usage of the Lebesgue convergence theorem without finding the dominating function for $u_{j^{'}} $.  Typically, there is no dominating function for $u_{j^{'}} $,  since, on the one hand, the cells will aggregate when chemotactic effect is strong, and, on the other hand, we would get a stronger convergence  if a dominating function was found. While, a  stronger convergence than that of  Theorem \ref{asy-profile} seems unavailable, since  the regularity, in particular, boundedness  results in Lemma \ref{nonsingular-chi} are not uniform with respect to $\chi$, even in $L^p$-topology. 
\end{remark}

\section{Large time behavior for the K-S model}
For the purpose of easier illustration, we shall first restrict our attention to the large time behavior for a special chemotaxis-growth model with nonlinear production in the chemical equation:
\begin{equation}\label{para-elli-min-lt}\left\{ \begin{array}{lll}
&u_t = \nabla \cdot (\nabla u-\chi u\nabla v)+u(a-bu^\kappa),  &\quad x\in \Omega, t>0, \\[0.2cm]
&0= \Delta v -v+u^\kappa,  &\quad x\in \Omega, t>0, \\[0.2cm]
&\frac{\partial u}{\partial \nu}=\frac{\partial v}{\partial \nu}=0, &\quad x\in \partial \Omega, t>0,\\[0.2cm]
&u(x,0)=u_0(x), &  \quad x\in \Omega,   \end{array}\right.  \end{equation}
where $a\geq 0, b>0, \chi>0$,  $\kappa>0$  and $\Omega\subset \mathbb{R}^n$ is a bounded smooth domain with $n\geq 1$.

For $\kappa =1$,  under the assumption  $\chi<b/2$, Tello and Winkler in \cite{TW07} proved that the solution of \eqref{para-elli-min-lt} converges in $L^\infty$-topology to its constant steady state $(a/b, a/b)$.  Here, for $f(u)=u(a-bu^\kappa)$ with $b>2\chi$,   we extend  the argument  of \cite{TW07} (see also \cite{NT13, STW14})  to show that the solution of \eqref{para-elli-min-lt}  converges in $L^\infty$-topology to its constant steady state $((a/b)^{\frac{1}{\kappa}}, a/b)$ as well. Moreover, we point out that the convergence rate is also exponential. This refined argument can be used to study  a  general chemotaxis-growth model, see Theorem \ref{large time2}.   Our asymptotic  result for \eqref{para-elli-min-lt} as $t$ tends to infinity is as follows.

\begin{theorem}\label{large time} Let  $u_0\in C(\overline{\Omega})$ with $u_0>0$ and let $a, b, \chi, \kappa>0$. If   $b>2\chi$, then the solution of (\ref{para-elli-min-lt}) converges  exponentially in $L^\infty$- topology to the constant equilibrium:
 $$
 \lim_{t\rightarrow \infty} \Bigr(\|u(\cdot,t)-(\frac{a}{b})^{\frac{1}{\kappa}}\|_{L^\infty(\Omega)}+\|v(\cdot,t)-\frac{a}{b}\|_{L^\infty(\Omega)}\Bigr)=0.
 $$ As a result, the equilibrium state  $((\frac{a}{b})^{\frac{1}{\kappa}}, \frac{a}{b})$  is globally asymptotically stable,  and hence the system  (\ref{para-elli-min-lt}) has no nonconstant steady state if the damping effect is strong (i.e., $b>2\chi$).
 \end{theorem}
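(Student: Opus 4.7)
The plan is to combine the uniform boundedness from Theorem \ref{thm-para-elli-min2} with a Tello--Winkler-style Lyapunov estimate. First, since $f(u)=u(a-bu^\kappa)$ and $g(u)=u^\kappa$ give $\theta=\kappa+1$ and $\beta=1$, the hypothesis $b>2\chi$ automatically implies the borderline condition $b\geq \frac{\kappa n-2}{\kappa n}\chi$; hence Theorem \ref{thm-para-elli-min2} provides a global classical solution $(u,v)$ uniformly bounded in $L^\infty(\Omega)\times W^{1,\infty}(\Omega)$. Starting from $u_0>0$, a parabolic comparison argument on the rewritten equation $u_t=\Delta u-\chi\nabla u\cdot\nabla v + u[a-\chi v+(\chi-b)u^\kappa]$ (obtained by substituting $\Delta v=v-u^\kappa$), combined with the uniform upper bound on $v$, yields a positive lower bound $u(\cdot,t)\geq \underline{u}>0$ valid for all sufficiently large $t$. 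Write $u_*=(a/b)^{1/\kappa}$ and $v_*=a/b=u_*^\kappa$.

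Introduce the relative entropy
\[ F(t):=\int_\Omega \Phi(u)\,dx + \mu \int_\Omega (v-v_*)^2\,dx,\qquad \Phi(s):=s-u_*-u_*\ln\frac{s}{u_*}\geq 0, \]
with $\mu\geq 0$ to be tuned. Testing the $u$-equation with $\Phi'(u)=1-u_*/u$ and integrating by parts gives
\[ \frac{d}{dt}\int_\Omega \Phi(u) = -u_*\int_\Omega \frac{|\nabla u|^2}{u^2} + \chi u_*\int_\Omega \frac{\nabla u\cdot\nabla v}{u} - b\int_\Omega (u-u_*)(u^\kappa-u_*^\kappa), \]
where the last integral is nonnegative by monotonicity of $s\mapsto s^\kappa$. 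For the chemotactic cross term, Cauchy--Schwarz yields
\[ \chi u_*\int_\Omega \frac{\nabla u\cdot\nabla v}{u}\leq \frac{u_*}{2}\int_\Omega \frac{|\nabla u|^2}{u^2}+\frac{u_*\chi^2}{2}\int_\Omega |\nabla v|^2, \]
while the $L^2$-identity $\|\Delta(v-v_*)\|_2^2+2\|\nabla(v-v_*)\|_2^2+\|v-v_*\|_2^2=\|u^\kappa-u_*^\kappa\|_2^2$, obtained by squaring $-\Delta(v-v_*)+(v-v_*)=u^\kappa-u_*^\kappa$ and integrating by parts, gives $\|\nabla v\|_2^2\leq \tfrac12\|u^\kappa-u_*^\kappa\|_2^2$.

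Using the mean-value identity $(u^\kappa-u_*^\kappa)^2=\kappa\xi^{\kappa-1}(u-u_*)(u^\kappa-u_*^\kappa)$ for some $\xi$ between $u$ and $u_*$, together with the uniform two-sided bounds $\underline{u}\leq u\leq \overline{u}$, I would absorb the $\|u^\kappa-u_*^\kappa\|_2^2$ term into the reaction integral to arrive at
\[ \frac{dF}{dt}\leq -\frac{u_*}{2}\int_\Omega \frac{|\nabla u|^2}{u^2} - \Bigl(b-\frac{u_*\chi^2 C_0}{4}\Bigr)\int_\Omega (u-u_*)(u^\kappa-u_*^\kappa) \]
with an explicit constant $C_0=\kappa\max(\overline{u},\underline{u})^{\kappa-1}$ (selecting the correct extremum depending on whether $\kappa\gtrless 1$). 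Under $b>2\chi$, and after calibrating $\mu>0$ to rebalance constants via the time-differentiated elliptic equation $-\Delta v_t+v_t=\kappa u^{\kappa-1}u_t$ (which controls $\frac{d}{dt}\int(v-v_*)^2$), the net coefficient of the reaction integral becomes strictly positive. Since $\Phi(u)\lesssim (u-u_*)^2\lesssim (u-u_*)(u^\kappa-u_*^\kappa)$ on $[\underline{u},\overline{u}]$ by Taylor expansion and monotonicity, and $\int(v-v_*)^2\lesssim \|u^\kappa-u_*^\kappa\|_2^2$ by the elliptic identity, this gives a differential inequality $\frac{dF}{dt}\leq -c_0 F(t)$, and Gr\"onwall yields $F(t)\leq F(0)e^{-c_0 t}$.

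Exponential $L^2$-decay of $u-u_*$ is then upgraded to exponential $L^\infty$-decay by Gagliardo--Nirenberg interpolation (Lemma \ref{GN-general}) against the uniform $L^\infty$-bound on $u$, and elliptic regularity applied to $-\Delta(v-v_*)+(v-v_*)=u^\kappa-u_*^\kappa$ transfers the same exponential rate to $v-v_*$ in $L^\infty$. The central technical obstacle is calibrating the Young-inequality weights, the Lyapunov parameter $\mu$, and the $L^\infty$-bounds entering $C_0$ so that the net reaction coefficient is positive precisely at the sharp threshold $b>2\chi$; a secondary subtlety is securing the uniform positive lower bound $\underline{u}$, which is most delicate when $\kappa<1$ since then $s^{\kappa-1}$ is singular at the origin and must be controlled through the comparison-principle form of the $u$-equation.
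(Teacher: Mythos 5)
Your proof strategy is genuinely different from the paper's. The paper follows the Tello--Winkler comparison idea: it constructs a coupled ODE system for upper and lower barriers $\overline u$ and $\underline u^{1/\kappa}$, shows the spatial solution is squeezed between them via a parabolic comparison (testing with positive/negative parts of $u^\kappa-\overline u^\kappa$ and $u^\kappa-\underline u$), and proves that the barriers converge exponentially to $(a/b)^{1/\kappa}$ by examining $\frac{d}{dt}\ln(\overline u^\kappa/\underline u)$, which is precisely where $b>2\chi$ appears. You instead propose a Lyapunov/entropy method. That is a legitimate alternative route in principle, but as set up it has a concrete gap.

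The gap is that your differential inequality does \emph{not} produce the advertised threshold $b>2\chi$. Following your own computation: after Young's inequality (with the unique weight that keeps the $\int|\nabla u|^2/u^2$ term nonpositive) and the elliptic identity $\|\nabla v\|_{L^2}^2\le\tfrac12\|u^\kappa-u_*^\kappa\|_{L^2}^2$, the coefficient in front of $\int(u-u_*)(u^\kappa-u_*^\kappa)$ becomes $-\bigl(b-\tfrac{u_*\chi^2C_0}{8}\bigr)$ with $u_*=(a/b)^{1/\kappa}$ and $C_0=\kappa\max\xi^{\kappa-1}$. For $\kappa=1$ this reads $b>\tfrac{a\chi^2}{8b}$, i.e.\ $b>\chi\sqrt{a/8}$ --- a condition that depends on $a$ and is neither implied by nor equivalent to $b>2\chi$; for general $\kappa$ the constant $C_0$ additionally depends on the (non-explicit, initial-data-dependent) $L^\infty$ bound from Theorem \ref{thm-para-elli-min2}, so the ``threshold'' you obtain is not a clean parameter condition at all. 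Your suggested repair --- tuning $\mu$ in $F=\int\Phi(u)+\mu\int(v-v_*)^2$ via the time-differentiated elliptic equation --- is not carried through, and I do not see how it could rebalance the constants: differentiating $\int(v-v_*)^2$ reintroduces $u_t$ and hence fresh cross terms with fresh constants, rather than trading against the problematic coefficient. This is the central missing step; without it the claimed implication ``$b>2\chi\Rightarrow F'\le -c_0F$'' is unjustified.

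A secondary but real issue: obtaining the uniform positive lower bound on $u$ (needed both for $\Phi$ to be well-behaved and, when $\kappa<1$, for $\xi^{\kappa-1}$ to stay finite) is itself nontrivial and in the paper relies on $b>2\chi$ in an essential way. The paper first improves the rough $L^\infty$ bound to $\limsup_t\max_x v\le a/(b-\chi)$ via comparison, and only then the sign $a-\chi v\ge a\tfrac{b-2\chi}{b-\chi}-\varepsilon>0$ for large $t$ prevents $u$ from touching zero. Your one-line appeal to ``a parabolic comparison argument combined with the uniform upper bound on $v$'' skips this refinement; the raw $L^\infty$ bound from Theorem \ref{thm-para-elli-min2} is not explicit enough to conclude $a-\chi\sup v>0$ directly. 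If you wish to pursue the Lyapunov route, you would need both this asymptotic-bound refinement and a genuinely new idea for why the entropy dissipation inequality closes exactly at $b>2\chi$; as written, the argument closes only under a different, $a$-dependent smallness condition.
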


 In the presence of nonlinear  secretion term $u^\kappa$, there arises several difficulties compared to  the linear case  ($\kappa=1$) \cite{TW07}. Then identification a suitable comparison ODE and demonstration that $u$ will never touch down zero are crucial to overcome those difficulties.
  \begin{proof} The key point is to find an appropriate ODE that is comparable to  (\ref{para-elli-min-lt}):  Let $(\bar{u},\bar{v})=(\bar{u}(t),\bar{v}(t))$ denote the solution of the initial value problem
 \be\label{ode-It}\left\{ \ba{ll}
&\overline{ u}_t=\chi\overline{u}( \overline{u}^\kappa- \underline{u})+ f(\overline{u}) \quad \text{ in } \quad  (0, T_m),  \\[0.25cm]
& \overline{u}(0)=\max\{\max_{x\in\overline{\Omega}}u_0(x), (\frac{a}{b})^{\frac{1}{\kappa}}\},\\[0.25cm]
&(\underline{ u}^{\frac{1}{\kappa}})_t=\chi \underline{ u}^{\frac{1}{\kappa}}( \underline{u}-\overline{u}^\kappa)+ f(\underline{ u}^{\frac{1}{\kappa}})\quad\text{ in }\quad  (0, T_m),  \\[0.25cm]
 &\underline{u}^{\frac{1}{\kappa}}(0)=\min\{\min_{x\in\overline{\Omega}}u_0(x),  (\frac{a}{b})^{\frac{1}{\kappa}}\}
 \ea\right. \ee
defined up to some maximal existence time $T_m\in(0,\infty]$. Hereafter, for later extension purpose, we write $f(u)=u(a-bu^\kappa)$. To ease reading,  the proof is broken into 6 steps.

  Step 1. We show that
  \be\label{u-lu2-com}
0\leq \underline{ u}^{\frac{1}{\kappa}}(t)\leq \overline{u}(t),  \quad\quad \forall t\in [0, T_m).
 \ee
 Clearly,  the problem \eqref{ode-It} has a  unique local solution by the fundamental existence theorem of ODEs.  It follows from $f(0)=0$ that $\underline{u}\equiv 0$ is a sub- solution of the $\underline{u}$- equation in \eqref{ode-It}, the fact $\underline{u}(0)>0$ then readily deduces  $\underline{u}\geq 0$. Indeed, $\underline{u}$ will be shown to be positive  in Step 3 below. 

 If $\overline{u}^\kappa (0)=\underline{u}(0)$ then $(\overline{u}(0),  \underline{u}(0))=(( a/b)^{\frac{1}{\kappa}}, a/b)$,  and so  $(\overline{u}, \underline{u})\equiv (( a/b)^{\frac{1}{\kappa}}, a/b)$ by uniqueness. If $\overline{u}^\kappa(0)>\underline{u}(0)$ and \eqref{u-lu2-com}  were  false,  then there would exist some $t_0>0$ such that $\overline{u}^\kappa(t_0)=\underline{u}(t_0)$. But then, the uniqueness again gives $(\overline{u}(t), \underline{u}(t))\equiv (w(t), w^\kappa(t))$ for all $t\in [0,T_m)$, where $w$ solves $w_t=f(w)$ with  $w(t_0)=\overline{u}^\kappa(t_0)$. This in turn shows $\overline{u}^\kappa(0)=\underline{u}(0)$, a contradiction.

 Step 2.  We show that
  \be\label{u-luzj-com}
 \underline{u}^{\frac{1}{\kappa}}(t)\leq (\frac{a}{b})^{\frac{1}{\kappa}}\leq \overline{u}(t),  \quad\quad \forall t\in [0,  T_{\max}).
 \ee
As a matter of fact, in view of  \eqref{ode-It} and \eqref{u-lu2-com}, one has  $\overline{u}_t\geq f(\overline{u})$. Observe that $\overline{u}(0)\geq (a/b)^{\frac{1}{\kappa}}$ and $f((a/b)^{\frac{1}{\kappa}})=0$,  we obtain $\overline{u}\geq  (a/b)^{\frac{1}{\kappa}}$ by comparison. In the same way, one can readily show  $(\underline{u})^{\frac{1}{\kappa}}\leq  (a/b)^{\frac{1}{\kappa}}$.

  Step 3.  We claim that  $T_{\max}=\infty$ and $\underline{u}(t)>0$ for all $t\geq 0$.

  It suffices to show that $\overline{u}$ is bounded, since $0<\underline{u}\leq a/b$ by Step 2. Indeed, we  conclude from the $\overline{u}$-equation in \eqref{ode-It} and Step 1 that
  $$
  \overline{u}_t=\chi \overline{u}^{\kappa+1}-\chi\overline{u}\underline{u}+f(\overline{u})=\chi \overline{u}^{\kappa+1}-\chi\overline{u}\underline{u}+a\overline{u}-b\overline{u}^{\kappa+1}\leq -(b-\chi)\overline{u}^{\kappa+1}+a\overline{u},
  $$
   which, coupled with the fact $b>\chi$,   implies that $\overline{u}$ is  bounded and 
$$
\overline{u}^\kappa \leq \frac{a}{b-\chi}. 
$$
This combines with  the third equitation in \eqref{ode-It} yield 
$$
\underline{ u}_t=\kappa \underline{ u}\Bigr[\chi( \underline{u}-\overline{u}^\kappa)+  \frac{f(\underline{ u}^{\frac{1}{\kappa}})}{\underline{ u}^{\frac{1}{\kappa}}}\Bigr]\geq\kappa \underline{ u}\Bigr[-(b-\chi)\underline{u}+\frac{(b-2\chi)a}{b-\chi}\Bigr], 
$$
which, together the assumption $b>2\chi, a>0$ and $\underline{u}(0)>0$, directly leads to $\underline{ u}>0$. 

  Step 4. We shall prove  that
\be\label{asy-u-lu}
 |\overline{u}(t)-(\frac{a}{b})^{\frac{1}{\kappa}}|+|\underline{u}(t)-\frac{a}{b}|\rightarrow 0  \quad \text{ exponentially  as } t\rightarrow \infty.
 \ee
Dividing the $\overline{u}$-equation in \eqref{ode-It} by $\overline{u}$ and the $ \underline{u}$-equation by $(\underline{u})^{\frac{1}{\kappa}}$, we  end up with 
$$\left\{ \ba{ll}
&\frac{\overline{ u}_t}{\overline{u}}=\chi( \overline{u}^\kappa- \underline{u})+\frac{f(\overline{u})}{\overline{u}}\quad \text{in  } (0, \infty),  \\[0.25cm]
&\frac{((\underline{ u})^{\frac{1}{\kappa}})_t}{(\underline{ u})^{\frac{1}{\kappa}}}=\chi ( \underline{u}-\overline{u}^\kappa) +\frac{f(\underline{ u}^{\frac{1}{\kappa}})}{\underline{ u}^{\frac{1}{\kappa}}}\quad \text{in  } (0, \infty).
 \ea\right.
 $$
 Subtracting these two equations gives
  \be\label{uo-over-uu}
   \frac{d}{dt}\Bigr(\ln \frac{\overline{u}}{(\underline{u})^{\frac{1}{\kappa }}}\Bigr)=2\chi (\overline{u}^\kappa-\underline{u})+\frac{f(\overline{u})}{\overline{u}}-\frac{f(\underline{ u}^{\frac{1}{\kappa}})}{\underline{ u}^{\frac{1}{\kappa}}}=-(b-2\chi)(\overline{u}^\kappa-\underline{u})\leq 0
  \ee
thanks to the fact $b>2\chi$ and \eqref{u-luzj-com} of Step 2.

Integrating \eqref{uo-over-uu} and using \eqref{u-luzj-com} of Step 2, we obtain
 \be\label{ul-lowerb}
  \underline{u}\geq \frac{\underline{u}(0)}{\overline{u}^\kappa(0)}\overline{u}^\kappa\geq \frac{\underline{u}(0)}{\overline{u}^\kappa(0)}\frac{a}{b}>0.
\ee
  Using mean value theorem, we have 
  $$
  \overline{u}^\kappa- \underline{u}=e^{\ln\overline{u}^\kappa}- e^{\ln \underline{u}}=e^{\ln\xi}(\ln\overline{u}^\kappa- \ln\underline{u})=\xi  \ln \frac{\overline{u}^\kappa}{\underline{u}},
  $$
where $\underline{u}\leq \xi\leq \overline{u}^\kappa$,  we then use \eqref{ul-lowerb}  to sharpen \eqref{uo-over-uu} to
  \be\label{grown-for-uu}
  \frac{d}{dt}\Bigr(\ln \frac{\overline{u}^\kappa}{\underline{u}}\Bigr)\leq =-\kappa \frac{\underline{u}(0)}{\overline{u}^\kappa(0)}\frac{a}{b}(b-2\chi)\ln \frac{\overline{u}^\kappa}{\underline{u}}:=-\epsilon_0\ln \frac{\overline{u}^\kappa}{\underline{u}}.
  \ee
  Solving the differential inequality \eqref{grown-for-uu} gives rise to
  $$
  \ln \frac{\overline{u}^\kappa}{\underline{u}}\leq e^{-\epsilon_0 t} \ln \frac{\overline{u}^\kappa(0)}{\underline{u}(0)}.
  $$
  Passing to the limit as $t\rightarrow \infty$ gives
$$ \kappa\ln \frac{\overline{u}}{(\underline{u})^{\frac{1}{\kappa}}}=
\ln \frac{\overline{u}^\kappa}{\underline{u}}\rightarrow 0  \quad \text{ exponentially  as } t\rightarrow \infty.
  $$
  Therefore,   we deduce from \eqref{u-luzj-com} of Step 2 and \eqref{ul-lowerb}  that 
  $$
 |\overline{u}(t)-(\frac{a}{b})^{\frac{1}{\kappa}}|+|(\underline{u}(t))^{\frac{1}{\kappa}}-(\frac{a}{b})^{\frac{1}{\kappa}}|=\overline{u}(t)-(\underline{u}(t))^{\frac{1}{\kappa}}=(\underline{u}(t))^{\frac{1}{\kappa}}(e^{\frac{1}{\kappa}\ln \frac{\overline{u}^\kappa}{\underline{u}}}-1)\rightarrow 0 
  $$
exponentially as  $ t\rightarrow \infty$,   which  implies  (\ref{asy-u-lu}).

Step 5. The nonlinear production $u^\kappa$ in the $v$-equation of  \eqref{para-elli-min-lt} causes some difficulties. However, we can connect the problem \eqref{ode-It} to our original problem \eqref{para-elli-min-lt} by establishing the key comparison relation
    \be\label{ul-u-uu-key}
 \underline{u}(t)\leq u^\kappa (x,t)\leq \overline{u}^\kappa (t), \quad \quad \quad  \forall (x,t)\in \overline{\Omega}\times [0,\infty).
  \ee
 To this end, let $\overline{U}=u^\kappa-\overline{u}^\kappa$. Then it follows from \eqref{para-elli-min-lt} and  \eqref{ode-It} that
 \be\label{Uu-eq}
\ba{ll}
 &\overline{U}_t-\Delta \overline{U}+\kappa (\kappa-1)u^{\kappa-2}|\nabla u|^2+\chi\nabla \overline{U}\nabla v\\[0.25cm]
&=\kappa\chi (u^\kappa+\overline{u}^\kappa-v)\overline{U}+\kappa\chi (\underline{u}-v)\overline{u}^\kappa+\kappa[u^{\kappa-1}f(u)-\overline{u}^{\kappa-1}f(\overline{u})]\\[0.25cm]
&= \kappa[ (\chi -b)(u^\kappa+\overline{u}^\kappa)- \chi v+a]\overline{U}+\kappa\chi (\underline{u}-v)\overline{u}^\kappa.
\ea
 \ee
 Integrating by parts, one gets
  \be\label{U+-by part}
-\int_\Omega(\overline{U})_+\nabla \overline{U}\nabla v=\frac{1}{2}\int_\Omega (\overline{U})_+^2\Delta v=\frac{1}{2}\int_\Omega (\overline{U})_+^2(v-u^\kappa),\ee
where $u_+=\max\{u, 0\}$ denotes the positive part of $u$.

Now,   we take $(\overline{U})_+$ as a test function in \eqref{Uu-eq} and apply \eqref{U+-by part} to get
 \be\label{diff-Uu}\ba{ll}
 &\D\frac{1}{2}\frac{d}{dt}\int_\Omega (\overline{U})_+^2+\int_\Omega |\nabla (\overline{U})_+|^2+\kappa(\kappa-1)\int_\Omega u^{\kappa-2} |\nabla u|^2 (\overline{U})_+\\[0.25cm]
 &=\D\int_\Omega  \Bigr(\kappa[ (\chi -b)(u^\kappa+\overline{u}^\kappa)- \chi v+a]+\frac{v-u^\kappa}{2}\Bigr) (\overline{U})_+^2\\
 & \ \ \D+\kappa\chi\int_\Omega\overline{u}^\kappa(\underline{u}-v)(\overline{U})_+.
 \ea\ee
 Since $u,v, \overline{u}$ and $\underline{u}$ are bounded, one has
$$
\kappa[ (\chi -b)(u^\kappa+\overline{u}^\kappa)- \chi v+a]+\frac{v-u^\kappa}{2}\leq m_1<\infty
$$
and
$$ \ba{ll}
 &\kappa\chi\D\int_\Omega\overline{u}^\kappa(\underline{u}-v)(\overline{U})_+\leq m_2 \int_\Omega(\underline{u}-v)_+(\overline{U})_+\\
 &\ \ \leq \D\frac{m_2}{2}\int_\Omega(\underline{u}-v)_+^2+\frac{m_2}{2}\int_\Omega(\overline{U})_+^2. 
\ea$$
 We then deduce from \eqref{diff-Uu} that
\be \ba{ll}\label{diff-Uuf}
 &\D\frac{1}{2}\frac{d}{dt}\int_\Omega (\overline{U})_+^2+\int_\Omega |\nabla (\overline{U})_+|^2+\kappa(\kappa-1)\int_\Omega u^{\kappa-2} |\nabla u|^2 (\overline{U})_+\\
 &\ \ \leq m_3\D\int_\Omega (\overline{U})_+^2+m_4\int_\Omega(\underline{u}-v)_+^2,
\ea\ee
 where and hereafter $m_i$ denotes a generic constant independent of $t$.

In the case of $\kappa\in (0, 1)$, we need to control  the third integral on the left-hand side of \eqref{diff-Uuf}. To this aim, we shall show that the  $u$-component  of \eqref{para-elli-min-lt} in fact  has a positive lower bound over $\bar{\Omega}\times [t_0,\infty)$ for any $t_0>0$:
\be\label{u-lower b}
\inf_{\Omega\times(t_0,\infty)}u:=m_5>0.
\ee
Its proof is little involved  than that of $\underline{u}>0$ in Step 3.  Indeed, expanding out the $u$-equation and using the $v$-equation of \eqref {para-elli-min-lt},  we get
$$
u_t=\Delta u-\chi\nabla u\nabla v-\chi u(v-u^\kappa)+f(u)\leq \Delta u-\chi\nabla u\nabla v-(b-\chi)u^{\kappa+1}+au.
$$
By comparing with the corresponding ODE or maximum principle, we see from $b>\chi$  that
$$
\limsup_{t\rightarrow \infty} \max_{x\in\bar{\Omega}}u(x,t)\leq \Bigr(\frac{a}{b-\chi}\Bigr)^{\frac{1}{\kappa}}.
$$
Then the maximum principle and Hopf lemma applied to $-\Delta v+v=u^\kappa$  yield
$$
\limsup_{t\rightarrow \infty} \max_{x\in\bar{\Omega}}v\leq \frac{a}{b-\chi}.
$$
This in turn gives,  for sufficiently large time $t$, that
$$
u_t=\Delta u-\chi\nabla u\nabla v-\chi u(v-u^\kappa)+f(u)\geq \Delta u-\chi\nabla u\nabla v-(b-\chi)u^{\kappa+1}+\frac{(b-2\chi)a}{b-\chi} u.
$$
Then the  fact that $b>2\chi$ and $a>0$ coupled with maximum principle and Hopf lemma again show
$$
\liminf_{t\rightarrow \infty} \min_{x\in\bar{\Omega}}u(x,t)\geq \Bigr[\frac{(b-2\chi)a}{(b-\chi)^2}\Bigr]^{\frac{1}{\kappa}}>0.
$$
On the other hand, the strong maximum principle and Hopf lemma allow  one to conclude  $u>0$ on $\bar{\Omega}\times (0,\infty)$. Thus,  $u$ has a positive lower bound over $\bar{\Omega}\times [t_0,\infty)$ for any $t_0>0$ ($t_0$ can be taken to be zero if $\inf_{\Omega}u_0>0$), leading to \eqref{u-lower b}.

In virtue of  \eqref{u-lower b}, we estimate
\be \ba{lll}\label{difficult-term}
\D\int_\Omega u^{\kappa-2} |\nabla u|^2 (\overline{U})_+&=\D\int_\Omega \frac{1}{\kappa^2u^\kappa}|\nabla (\overline{U})_+|^2(\overline{U})_+\\[0.25cm]
&\D\leq \frac{1}{\kappa^2m_5^\kappa} \int_\Omega |\nabla (\overline{U})_+|^2(\overline{U})_+\\[0.3cm]
&=-\D\frac{1}{2\kappa^2m_5^\kappa} \int_\Omega (\overline{U})_+^2\Delta (\overline{U})_+.
\ea
\ee
Since $u$ and $v$ are bounded, using $H^2$ estimate to the $v$-equation and parabolic boundary $L^p$-estimates and Schauder estimates (cf. \cite{La}) to the $u$-equation in \eqref{para-elli-min-lt}, we see that all spatial partial derivatives of $u$ and $v$ up to order two are bounded in $\overline{\Omega}\times [0,\infty)$.  Consequently,  for $\kappa>0$, no matter $\kappa\geq 1$ or not,  we infer from \eqref{diff-Uuf} and \eqref{difficult-term} that
  \be\label{diff-Uuf}
\frac{1}{2}\frac{d}{dt}\int_\Omega (\overline{U})_+^2+\int_\Omega |\nabla (\overline{U})_+|^2
 \leq m_6\int_\Omega (\overline{U})_+^2+m_7\int_\Omega(\underline{u}-v)_+^2.
\ee
Carrying out the similar procedure as above to  $\underline{U}=u^k-\underline{u}$ and keeping in mind \eqref{ode-It} and \eqref{u-lower b},  we obtain
\be\label{diff-Ulf}
 \frac{1}{2}\frac{d}{dt}\int_\Omega (\underline{U})_-^2+\int_\Omega |\nabla (\underline{U})_-|^2
 \leq m_8\int_\Omega (\underline{U})_-^2+m_9\int_\Omega(v-\overline{u}^\kappa)_+^2, \ee
where $u_-=\min\{u, 0\}$ denotes  the negative part of $u$.

The $v$-equation in \eqref{para-elli-min-lt} implies $-\Delta v+(v-\overline{u}^\kappa)=\overline{U}$. Hence, by choosing $(v-\overline{u}^\kappa)_+$ as a test function and using Cauchy-Schwarz  inequality, we obtain
$$
\int_\Omega |\nabla (v-\overline{u}^\kappa)_+|^2 +\int_\Omega  (v-\overline{u}^\kappa)_+^2\leq\int_\Omega  (\overline{U})_+(v-\overline{u}^\kappa)_+ \leq \frac{1}{2}\int_\Omega  (\overline{U})_+^2+\frac{1}{2}\int_\Omega(v-\overline{u}^\kappa)_+^2,
$$
which directly gives
 \be\label{Uu-lowerb}
  \int_\Omega(v-\overline{u}^\kappa)_+^2\leq \int_\Omega  (\overline{U})_+^2.
 \ee
 Similarly, taking $(v-\underline{u})_-$ as a test function in $-\Delta v+(v-\underline{u})=\underline{U}$, we obtain
  \be\label{Ul-lowerb}
   \int_\Omega(\underline{u}-v)_+^2=\int_\Omega(v-\underline{u})_-^2\leq \int_\Omega  (\underline{U})_-^2.
 \ee
 We finally infer from \eqref{diff-Uuf}-\eqref{Ul-lowerb}  that
$$
\frac{d}{dt}\bigg(\int_\Omega (\overline{U})_+^2+\int_\Omega(\underline{U})_-^2
 \bigg)\leq m_{10}\bigg(\int_\Omega (\overline{U})_+^2+\int_\Omega(\underline{U})_-^2\bigg).
 $$
Observing that $(\overline{U}(0))_+=0=(\underline{U}(0))_-$, we solve  from the above inequality that
$$
(\overline{U})_+=0=(\underline{U})_-,
$$
which establishes  the key inequality \eqref{ul-u-uu-key}.

Step 6. To complete the proof of the theorem, we conclude from \eqref{ul-u-uu-key}   and \eqref{asy-u-lu}, exponentially,   that
 \be\label{u-lim}
\lim_{t\rightarrow \infty}\|u(\cdot,t)-(\frac{a}{b})^{\frac{1}{\kappa}}\|_{L^\infty(\Omega)}=0.
 \ee
The maximum principle applied to the equation $-\Delta v+v=u^\kappa$ shows
$$
\inf_{y\in\bar{\Omega}}u^\kappa(y,t)\leq v(x,t)\leq \sup_{y\in\bar{\Omega}}u^\kappa(y,t), \quad \forall (x,t)\in \overline{\Omega}\times [0,\infty).
$$
Sending $t\rightarrow \infty$ and using \eqref{u-lim}, we arrive at, exponentially, 
$$
\lim_{t\rightarrow \infty}\|v(\cdot,t)-\frac{a}{b}\|_{L^\infty(\Omega)}=0,
$$
which completes the proof.
\end{proof}
 For  a general chemotaxis-growth  model  of \eqref{para-elli-min-lt} of the form
\begin{equation}\label{para-elli-min-general}\left\{ \begin{array}{lll}
&u_t = \nabla \cdot (\bigtriangledown u-\chi u\nabla v)+f(u),  &\quad x\in \Omega, t>0, \\[0.2cm]
&0= \bigtriangleup v -v+u^\kappa,  &\quad x\in \Omega, t>0, \\[0.2cm]
&\frac{\partial u}{\partial \nu}=\frac{\partial v}{\partial \nu}=0, &\quad x\in \partial \Omega, t>0,\\[0.2cm]
&u(x,0)=u_0(x), & \quad x\in \Omega,    \end{array}\right.  \end{equation}
where the growth source $f$ satisfies
\be\label{f-growth-kappa}
f(u)\leq \sigma-\mu u^{\kappa+1}, \forall u\geq 0\text { with } \sigma \geq 0, \mu>0  \text{ and  } f(0)\geq0.
\ee
We wish to show, for suitably small $\chi$, the large time behavior of \eqref{para-elli-min-general} is comparable  to the corresponding  ODE+algebraic system:
\begin{equation}\label{ode-lt}\left\{ \begin{array}{lll}
&p_t = f(p),  &\quad  t>0, \\[0.2cm]
&0=  -q+p^\kappa, &\quad   t>0, \\[0.2cm]
&p(0)=p_0>0.   \end{array}\right.  \end{equation}
To that end, let us arrange the finitely many zero points of $f$ in an increasing order as  $Z=\{z\geq 0: f(z)=0\}=\{ z_0=0, z_1,z_2,\cdots,z_k\}$. Then simple phase line analysis yields that $(z_{2j+1}, z_{2j+1}^\kappa)$  is locally asymptotically stable and  $(z_{2j},z_{2j}^\kappa) $ is locally unstable stable for the associated ODE+algebraic system, respectively.   To carry over such stability to the chemotaxis-growth  model  \eqref{para-elli-min-general}, besides \eqref{f-growth-kappa}, we need a stronger condition on $f$: there exists $\eta_0>0$ such that,  for any $r>0,s>0$ with $r\leq z_j\leq  s$, one has
 \be\label{f-diff-strong}
\frac{f(s)}{s}-\frac{f(r)}{r}\leq -(2\chi+\eta_0) (s^\kappa-r^\kappa).
\ee
In the case of $\kappa\neq 1$ (nonlinear production), we need  impose a condition  to preclude that $u$ touches  down zero at infinity, which will be the case if the infimum of the solution  to
\be\label{u-low-b-ODE} \left\{ \begin{array}{ll}
&y_t = f(y)+\chi y^{\kappa+1}-\chi z_\infty^\kappa y, \\[0.2cm]
&y(0)=\min_{x\in\bar{\Omega}} u_0(x)   \end{array}\right.  \ee
converges to a positive constant as $t\rightarrow \infty$, which may be implied by \eqref{f-diff-strong}. Here, $0<z_\infty=\lim_{t\rightarrow \infty} z(t)$ and $z(t)$ is the  solution of
$$
\left\{ \begin{array}{ll}
&z_t = f(z)+\chi z^{\kappa+1}, \\[0.2cm]
&z(0)=\max_{x\in\bar{\Omega}} u_0(x).   \end{array}\right.
$$

With minor  modifications of the proof of Theorem \ref{large time}, one can derive the following general convergence result for \eqref{para-elli-min-general}.
 \begin{theorem}\label{large time2} Let $f$ fulfill \eqref{f-growth-kappa} with $\mu>\chi$ and \eqref{f-diff-strong} and let the solution $y(t)$ of \eqref{u-low-b-ODE} satisfy  $\liminf_{t\rightarrow \infty} y(t)=y_\infty>0$. Then,  for any locally stable equilibrium $(z_j,z_j^2)$ of the ODE+algebraic system \eqref{ode-lt} with  $1\leq j\leq k$ and for any positive  initial data $u_0\in C^0(\overline{\Omega})$ with
\be\label{initial-u0}
z_{j-1}< \min_{\bar{\Omega}}u_0\leq \max_{\bar{\Omega}}u_0<z_{j+1},
\ee
 the solution of (\ref{para-elli-min-general})  converges exponentially  in $L^\infty$- topology to the constant equilibrium  $(z_j,z_j^2)$ as $\rightarrow \infty$:
 $$
 \lim_{t\rightarrow \infty} \Bigr(\|u(\cdot,t)-z_j\|_{L^\infty(\Omega)}+\|v(\cdot,t)-z_j^\kappa\|_{L^\infty(\Omega)}\Bigr)=0.
 $$
Here, we augmented the notation $z_0=0$ and $z_{k+1}=\infty$.  As a result, the constant steady state $(z_j,z_j^2)$  is locally exponentially asymptotically stable.
 \end{theorem}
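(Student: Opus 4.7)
The plan is to adapt the six-step strategy of Theorem \ref{large time} verbatim, with the two specific structural ingredients used there replaced by the new hypotheses. First I would introduce the comparison ODE system obtained from \eqref{ode-It} by keeping the chemotactic terms and replacing the specific logistic source by the abstract $f$: let
\begin{equation*}
\left\{\begin{array}{l}
\overline{u}_t=\chi\overline{u}(\overline{u}^\kappa-\underline{u})+f(\overline{u}),\qquad \overline{u}(0)=\max\{\max_{\overline{\Omega}}u_0,\ z_j\},\\[0.2cm]
(\underline{u}^{1/\kappa})_t=\chi\underline{u}^{1/\kappa}(\underline{u}-\overline{u}^\kappa)+f(\underline{u}^{1/\kappa}),\qquad \underline{u}^{1/\kappa}(0)=\min\{\min_{\overline{\Omega}}u_0,\ z_j\}.
\end{array}\right.
\end{equation*}
Using \eqref{initial-u0} together with the phase-line picture of \eqref{ode-lt} at the stable zero $z_j$ (so that $f\geq 0$ on $(z_{j-1},z_j)$ and $f\leq 0$ on $(z_j,z_{j+1})$), the same comparison arguments as Steps 1--2 yield $0\leq \underline{u}^{1/\kappa}\leq z_j\leq \overline{u}$ and, by the upper bound $f(\overline{u})\leq \sigma-\mu\overline{u}^{\kappa+1}$ combined with $\mu>\chi$, the global existence claimed in Step 3. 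The positivity $\underline{u}>0$ for all $t\geq 0$ (the analogue of the end of Step 3) follows from the hypothesis $\liminf_{t\to\infty}y(t)=y_\infty>0$ applied to the ODE \eqref{u-low-b-ODE}, since $\underline{u}^{1/\kappa}$ is bounded below by $y(t)$ via comparison once $\overline{u}^\kappa$ is known to stay below $z_\infty^\kappa$.

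The decisive step is Step 4, which is where the tailored hypothesis \eqref{f-diff-strong} enters. Dividing each ODE by its dependent variable and subtracting produces, exactly as in \eqref{uo-over-uu},
\begin{equation*}
\frac{d}{dt}\Bigl(\ln\frac{\overline{u}}{\underline{u}^{1/\kappa}}\Bigr)=2\chi(\overline{u}^\kappa-\underline{u})+\frac{f(\overline{u})}{\overline{u}}-\frac{f(\underline{u}^{1/\kappa})}{\underline{u}^{1/\kappa}}.
\end{equation*}
Applying \eqref{f-diff-strong} with $s=\overline{u}$ and $r=\underline{u}^{1/\kappa}$, which is legitimate because $r\leq z_j\leq s$ by Step 2, the right-hand side is bounded above by $-\eta_0(\overline{u}^\kappa-\underline{u})\leq 0$. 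A mean-value argument of the type used in \eqref{grown-for-uu}, together with the positive lower bound $\underline{u}\geq \underline{u}(0)\overline{u}^\kappa/\overline{u}^\kappa(0)>0$ inherited from the monotonicity just established, then upgrades this to a genuine exponential decay $\ln(\overline{u}^\kappa/\underline{u})\leq C e^{-\varepsilon_0 t}$, which produces the exponential analogue of \eqref{asy-u-lu} with limit $z_j$ instead of $(a/b)^{1/\kappa}$.

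Step 5 -- the pointwise sandwich $\underline{u}(t)\leq u^\kappa(x,t)\leq \overline{u}^\kappa(t)$ -- transfers verbatim: one tests the equation for $\overline{U}=u^\kappa-\overline{u}^\kappa$ against $(\overline{U})_+$, integrates by parts, uses $-\Delta v+v=u^\kappa$ to control the cross term, and obtains an inequality of the form \eqref{diff-Uuf} in which $(\chi-b)$ is replaced by a bound coming from \eqref{f-growth-kappa}. The only place where the linear/logistic structure was truly used is the bound on $\int_\Omega u^{\kappa-2}|\nabla u|^2(\overline{U})_+$ in the case $\kappa\in(0,1)$, which required a uniform positive lower bound on $u$. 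Here the same bound is furnished directly by the assumption $\liminf_{t\to\infty}y(t)=y_\infty>0$ applied to \eqref{u-low-b-ODE}, combined with the strong maximum principle and Hopf lemma on finite time intervals $[0,t_0]$. The elliptic comparison estimates \eqref{Uu-lowerb}--\eqref{Ul-lowerb} and the resulting Gronwall argument carry over unchanged. Finally Step 6 deduces the $L^\infty$ convergence of $u$ to $z_j$ and, via the maximum principle applied to $-\Delta v+v=u^\kappa$, the exponential $L^\infty$ convergence of $v$ to $z_j^\kappa$.

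The main obstacle is the exponential contraction in Step 4: while the logistic case produces the clean identity $\frac{f(s)}{s}-\frac{f(r)}{r}=-b(s^\kappa-r^\kappa)$, in the general setting only the one-sided estimate \eqref{f-diff-strong} is available, and moreover it must be matched correctly with the sign of $\overline{u}^\kappa-\underline{u}$ and with the fact that both $\overline{u}$ and $\underline{u}^{1/\kappa}$ bracket $z_j$ rather than the specific $(a/b)^{1/\kappa}$. Establishing that the comparison ODE indeed preserves this bracketing globally, so that \eqref{f-diff-strong} is continuously applicable, is the subtlety that justifies the somewhat strong form of the hypothesis and is where the proof must be written out most carefully.
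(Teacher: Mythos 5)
The paper itself does not give an explicit proof of Theorem~\ref{large time2}; it merely says the result follows by ``minor modifications of the proof of Theorem~\ref{large time},'' which makes your task one of reconstructing those modifications. Your proposal does this correctly: you keep the six-step sandwich strategy, replace $(a/b)^{1/\kappa}$ by the stable zero $z_j$ in the initial data of the comparison ODE system, use the phase-line information at $z_j$ and the condition \eqref{initial-u0} to reproduce Steps~1--2 (the bracketing $\underline u^{1/\kappa}\le z_j\le\overline u$), invoke $\mu>\chi$ from \eqref{f-growth-kappa} for boundedness, and — crucially — identify that \eqref{f-diff-strong} is exactly the one-sided replacement for the identity $\tfrac{f(s)}{s}-\tfrac{f(r)}{r}=-b(s^\kappa-r^\kappa)$ that drives the logarithmic Gronwall estimate in Step~4, giving $\tfrac{d}{dt}\ln\bigl(\overline u/\underline u^{1/\kappa}\bigr)\le-\eta_0(\overline u^\kappa-\underline u)$. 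The sandwich and elliptic comparison of Step~5 transfer verbatim, and you correctly note that the $\liminf y=y_\infty>0$ hypothesis exists precisely to rescue the $\kappa\in(0,1)$ positivity lower bound that Theorem~\ref{large time} obtained from the explicit logistic structure via $b>2\chi$.

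One small but worth-noting imprecision: the comparison ODE \eqref{u-low-b-ODE} is designed to furnish a subsolution for the \emph{PDE} solution $u(x,t)$ (so that $\min_{\overline\Omega}u(\cdot,t)\ge y(t)$, giving the uniform lower bound $m_5$ used in \eqref{difficult-term}), whereas in your Step~3 paragraph you invoke $y$ to bound the \emph{ODE} quantity $\underline u^{1/\kappa}$ from below. This chain does work — $\underline u^{1/\kappa}$ is indeed a supersolution of the $y$-equation once $\overline u^\kappa\le z_\infty^\kappa$ — but its initial comparability $\underline u^{1/\kappa}(0)\ge y(0)$ fails when $\min_{\overline\Omega}u_0>z_j$, since then $\underline u^{1/\kappa}(0)=z_j<\min u_0=y(0)$. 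The positivity of the ODE quantity $\underline u^{1/\kappa}$ does not actually need $y$ at all: it follows, as in the original Step~3, from the multiplicative structure of the $\underline u$-equation together with the exponential contraction proved in Step~4 (which forces $\underline u^{1/\kappa}\to z_j>0$). The hypothesis on $y$ should be reserved entirely for the PDE lower bound in Step~5, where it is indispensable; your Step~5 paragraph invokes it correctly. With that bookkeeping adjustment the proposal is sound and coincides with the route the paper intends.
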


\textbf{Acknowledgments}    We thank Prof. Michael Winkler for personal communications, useful discussions and suggestions on the manuscript. Z. Wang  was supported by the Hong Kong RGC ECS (early career scheme) grant No. 509113 and an internal grant  G-YBCS from the Hong Kong Polytechnic University. T. Xiang  was  funded by China Postdoctoral Science Foundation (No.2015M570190), the Fundamental Research Funds for the Central Universities, and the Research Funds of Renmin University of China (No.15XNLF10).

%\addcontentsline{toc}{section}{\\References}

\end{document}